\newcommand\RR{{\mathbb{R}}}
\newcommand\NN{{\mathbb{N}}}
\renewcommand{\L}{\mathscr L}
\newcommand{\peaks}{\mathscr{M}}
\newcommand{\I}{\mathscr{I}}
\theoremstyle{remark}
\begin{document}
\title{Persistence Barcodes versus  Kolmogorov Signatures:  \\ Detecting Modes of One-Dimensional Signals\thanks{Research partially supported by DFG FOR 916, Volkswagen Foundation, and the {\sc Toposys} project FP7-ICT-318493-STREP}} 

\date{January 27, 2015}

\author[1]{Ulrich Bauer}
\author[2,3]{Axel Munk}
\author[2]{Hannes Sieling}
\author[4]{Max Wardetzky}

\affil[1]{Technische Universität München (TUM)}%
\affil[2]{Institute for Mathematical Stochastics, 
University of G{\"o}ttingen}%
\affil[3]{Max Planck Institute for Biophysical Chemistry, G{\"o}ttingen}%
\affil[4]{Institute of Numerical and Applied Mathematics,  University of G{\"o}ttingen}%

\maketitle
\begin{abstract}
We investigate the problem of  estimating the number of modes (i.e., local maxima)---a well known question in statistical inference---and we show how to do so without presmoothing the data. 
To this end, we modify the ideas of persistence barcodes by first relating persistence values in dimension one to distances (with respect to the supremum norm) to the sets of functions with a given number of modes, and subsequently working with norms different from the supremum norm.
As a particular case we investigate the \emph{Kolmogorov norm}. We argue that this modification has certain statistical advantages. We offer confidence bands for the attendant \emph{Kolmogorov signatures}, thereby allowing for the selection of relevant signatures 
with a statistically controllable error.  As a result of independent interest, we show that \emph{taut strings} minimize the number of critical points for a very general class of functions. We illustrate our results by several numerical examples.
\end{abstract}
AMS subject classification: Primary 62G05,62G20; secondary 62H12\\

\section{Introduction}
Persistent homology~\cite{EdelesbrunnerHarer10,Edelsbrunner2002Topological} provides a \emph{quantitative} notion of the stability or robustness of critical values  of a (sufficiently nice) real valued function $f$ on a topological space: the persistence of a critical value is a lower bound on the amount of perturbation (in the supremum norm) required for its elimination. Persistence measures the life span of homological features in terms of the difference between \emph{birth} and \emph{death} of such features---according to the filtration of the underlying topological space that arises from the sublevel sets of $f$.
Birth and death of homological features of $f$ can be encoded in a barcode diagram, see~\cite{Edelsbrunner2002Topological}. In this article, we consider what we call \emph{persistence signatures}, defined as (half) the life span (or persistence) of critical values, i.e., 
persistence signatures correspond to (half) the lengths of persistence barcodes and (when properly ordered) give rise to a descending sequence 
\begin{align}\label{eq:persistence-signatures}
s_{0,\infty}(f) \geq s_{1,\infty}(f) \geq s_{2,\infty}(f)\geq \cdots \  ,
\end{align}
where we appropriately account for multiplicity of critical values. %
In our setup, $s_{0,\infty}(f)$ denotes the largest \emph{finite} persistence value of $f$, and we append the sequence by zeros beyond the smallest positive persistence value of $f$. 

We consider one dimensional signals $f:[0,1]\to \RR$. For the moment, to illustrate our results, let $X$ denote the space of piecewise constant real-valued functions on a (variable) equipartition of $[0,1]$. (Later in our exposition, we also consider more general function spaces.) Let $X_k \subset X$ denote the space of functions with at most $k$ \emph{modes}, i.e., local maxima, where we only count inner local maxima. %
Our point of departure is the observation that 
$$
s_{k,\infty}(f)=\mathrm{dist_\infty}(f, X_k) \ ,
$$
i.e., $s_{k,\infty}(f)$ equals the distance of $f$ to the space of functions with at most $k$ modes with respect to the sup norm. 
This follows from the combination of two facts. First, from the celebrated stability theorem in persistent homology~\cite{CohenSteiner2007Stability}, which asserts that 
\begin{align*}%
|s_{k,\infty}(f)-s_{k,\infty}(g)|\leq \|f-g\|_\infty \quad \text{for all $k\geq 0$} \ .
\end{align*}
Second, from the fact that in oder to eliminate all positive persistence signatures of $f:[0,1]\to \RR$ with value less or equal to $\delta$, it suffices to change $f$ by $\delta$ in the sup norm, see~\cite{Bauer2012Optimal}.\footnote{Note that this result does no longer hold in dimensions greater than two.}

The fact that persistence signatures correspond to distances (with respect to the sup norm) to sets of functions with at most $k$ modes leads us to considering norms different from the sup norm. Our motivation is to ask how signatures arising from different norms compare in a statistical sense. 
To this end, consider an arbitrary metric $d$ on $X$ and define the metric signatures 
$$
s_k(f):=\mathrm{dist}(f, X_k)\quad \text{with respect to $d$} \ .
$$
Then $(s_k(f))$ is an descending sequence as in~\eqref{eq:persistence-signatures}, see \cref{fig:signatures}.
\begin{figure}[t]
\center{
 \includegraphics[width= 0.4\columnwidth]{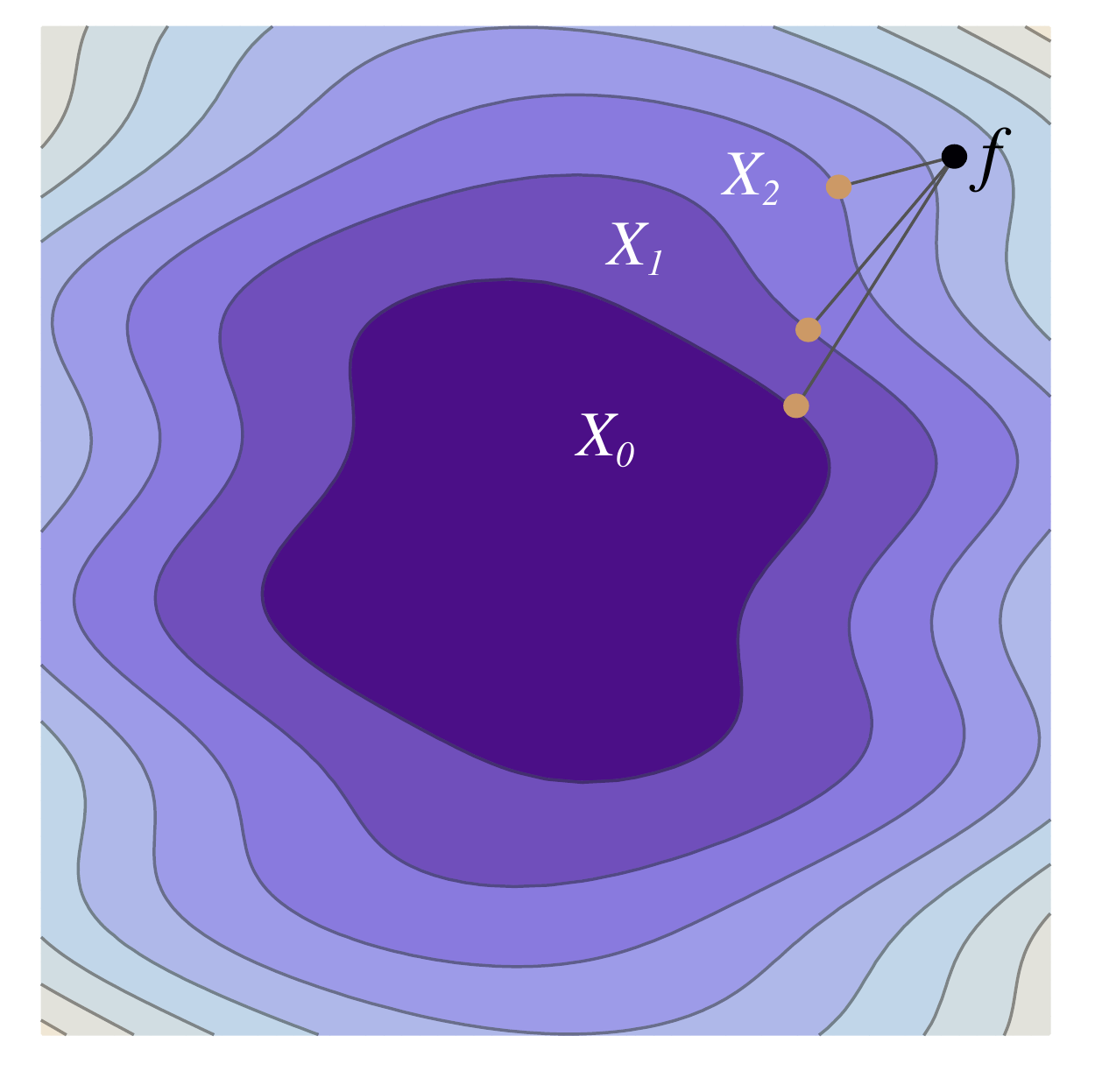}
 }
\caption{Illustration of metric signatures, i.e., distances of some $f\in X$ to the sets $X_k$ containing those functions with at most $k$ modes.}\label{fig:signatures}
\end{figure}
Moreover, since distance to sets in metric spaces is $1$-Lipschitz, stability is immediate:
\begin{align*}%
|s_k(f)-s_k(g)|\leq d(f,g) \quad \text{for all $k\geq 0$} \ .
\end{align*}
The resulting signatures $s_k(f)$ will in general be different from persistence signatures. The aim of this article is to analyze, from a statistical and algorithmic point of view, one particular example: the \emph{Kolmogorov metric} $d_K$ and its resulting Kolmogorov signatures $s_K$. For one dimensional signals $f:[0,1]\to \RR$ the Kolmogorov norm is defined as the $L^\infty$-norm of the antiderivative $F$ of $f$, subject to $F(0)=0$. The Kolmogorov norm plays a prominent role in probability and statistics, see, e.g.,~\cite{shorack2009empirical}. Our approach is based on the observation that if $s_k(f)=0$, then (the unknown function) $f$ has at most $k$ modes. This provides a link between \emph{mode hunting}, a widely studied problem in statistics~\cite{Harting85, Good80, Hartigan2000, Davies2004Densities, Silverman81}, and the robust estimation of signatures.  Most related to our approach is~\cite{Davies2004Densities}, where the Kolmogorov norm has been used for mode hunting in the context of density estimation.

In the sequel we consider the following basic statistical additive regression model. Suppose that $f:[0,1]\to \RR$ is corrupted by random noise $\epsilon$ and observed by a finite number of (equidistantly sampled) measurements $(Y_i)_{i=0}^n$, i.e., 
\begin{align}\label{intro:model}
Y_i = f(t_i) + \epsilon_i, \quad t_i = i/n \ .
\end{align}
Throughout we assume that the noise $(\epsilon_i)$ is independently distributed with mean zero such that for
some $\kappa>0$, $v>0$ and all $m\geq 2$,
\begin{align}\label{stat:ass:moments:intro}
 \mathbb{E}\left|\epsilon_i\right|^m \leq v m ! \kappa^{m-2}/2 \; \text{ for all} \;i=1,\ldots,n \ .
\end{align}

We are concerned with the following question: \emph{With what probability can one estimate the number of modes of $f$ (or provide bounds for its under- and overestimation) from the observations~$Y$?}

In dimension one, where mode hunting is intimately related to persistent homology, this question has been addressed in topological data analysis (TDA). A well known problem in this context is the fact that the stability theorem of persistent homology is based on the sup norm, which potentially makes this approach non-robust to outliers or unbounded noise. Therefore, 
several methods have been recently suggested to overcome this problem in various settings \citep{balakrishnan12minimax,Bendich2011Improving,Bubenik2007Statistical,Bubenik2010Statistical,Chazal2009GromovHausdorff,Chazal2011Geometric,Kloke2010Topological,Sheehy2012Multicover}. 
Roughly speaking, these methods have in common that they first regularize or filter the data in one form or another---in order to improve stability with respect to the sup norm---and then work with the persistence diagram of the so obtained preprocessed result. 
This is based on the initial estimation of $f$ itself. From a statistical perspective, however, having to estimate $f$ in a first step somewhat weakens the potential appeal of TDA.  
Already in dimension one of the underlying space, estimating $f$ by any regularization technique leads to difficult problems, e.g.,  data driven smoothing or parameter thresholding. We stress  that in addition, this sensibly affects the resulting persistence properties in a statistically hard to control manner, see, e.g., \citep{balakrishnan12minimax,Bubenik2010Statistical,fasy2014} for the case of a kernel estimator. In fact, presmoothing with a kernel estimator leads to what has been sometimes called the \emph{notorious bandwidth selection problem}, which 
does not posses a widely accepted solution since 
the optimal bandwidth (e.g., in the sense of minimizig the mean squared error between $f$ and its kernel estimate)---although theoretically known---depends on unknown characteristics  of $f$, such as its curvature (see \citep{wand95} among many others).  
Hence, we argue that a conceptual simplification and a computational advantage of TDA would result from circumventing explicit estimation of $f$. 

One aim of this paper is to show that direct estimation of topological properties of $f$ \emph{without} having to estimate 
$f$ itself is indeed a doable task by using Kolmogorov signatures. 
We confine ourselves to dimension one because using the Kolmogorov norm in this case  lends itself to an efficient algorithm ($O(n \log n)$ in the number of data points). We stress that that our statistical analysis carries over higher dimensions. 

A second aim of this paper is to provide confidence statements on the empirical Kolmogorov signatures with a controllable statistical error, similar in spirit to~\citep{fasy2014},  
where asymptotic  confidence bands for the empirical (sup norm based) persistence diagram are given for data on a manifold.
Their approach, however, is based on presmoothing for unbounded noise using a kernel density estimator, which we avoid in this paper.

\paragraph*{Inference for Kolmogorov signatures}
Using the Kolmogorov metric and the resulting Kolmogorov signatures, we investigate how well the empirical signatures $s_k(Y)$, obtained by interpreting $Y$ as a piecewise constant function, estimate the signatures $s_k(f)$. %
As a starting point, \cref{stat:thm:ineqbernst} asserts that under the moment condition~\eqref{stat:ass:moments:intro}, for any $\delta>0$ one has
\begin{align*}
 \mathbb P \left( \max_{k\in \NN_0} |s_k(Y)- s_k(f)| \geq \delta \right)  \leq 2 \exp\left( -  \frac{\delta^2 n}{2v +2\kappa \delta}\right) \ .
\end{align*}
Using this, \cref{stat:lem:confsign} asserts that for a given probability $\alpha\in (0,1)$, one can construct non-asymptotic confidence regions for the entire sequence $(s_k(f))$ of signatures in the sense that 
 \begin{align}\label{eq:conf-intro}
 \mathbb P \left( s_k(f) \in \left[\left(s_k(Y) - \tau_n(\alpha)  \right)_{+}, s_k(Y) + \tau_n(\alpha)  \right] \; \text{for all} \; k\in \mathbb{N}_0\right) \geq 1- \alpha \ ,
 \end{align}
where $(x)_+ = \max(0,x)$. Here $\tau_n(\alpha)$ depends in an explicit manner on $n$, $\alpha$, $\kappa$, and $v$, which are known constants or can be easily estimated from the data. We drop the dependence of $\tau_n$ on $\kappa$ and $v$ by considering $\kappa$ and $v$ fixed since we are mainly concerned with the dependence on $n$ and $\alpha$. For fixed $\alpha, \kappa, v$, one asymptotically has $\tau_n(\alpha)\approx 1/\sqrt{n}$. The parameter $\tau_n(\alpha)$ can be used to threshold the empirical signatures $s_k(Y)$ by defining 
\begin{align*}
k_{\epsilon} (Y) = \max\{k \in \mathbb{N}_0 : s_{k-1}(Y) \geq \epsilon \} \ ,
\end{align*} 
where, as a convention, we define $s_{-1}(Y)=\infty$. Then \cref{stat:thm:overest} asserts that for all $k \in \NN_0$, $f \in X_k$, and $\alpha\in (0,1)$, one has
\begin{align*}
 \mathbb P\left( k_{\tau_n(\alpha)} (Y) > k \right) \leq \alpha \ ,
 \end{align*}
i.e., the threshold parameter $\tau_n(\alpha)$ controls the probability of \emph{overestimating} the number of modes for any function $f\in X$. Notice that $\tau_n(\alpha)$ is independent of the number and magnitude of the modes of $f$, so in this sense the result is universal.
Obtaining a universal result in the other direction, i.e., controlling the probability of \emph{underestimating} the number of modes, is a more delicate task. Indeed, as pointed out in \citep{Donoho88}, obtaining such results is in general impossible if the modes of $f$ are allowed to become arbitrarily small. As a consequence, without a priori information on the ``smallest scales'' of $f$, no method can provide a control for their underestimation. Therefore, it is \emph{only} possible to provide a bound for underestimating those signatures of $f$ that are larger than a certain threshold.  \Cref{stat:thm:underest} asserts that 
for any $k \in \NN_0$, $f \in X_k$, and $\alpha \in (0,1)$, one has
\begin{align*}
 \mathbb P\left(  k_{\tau_n(\alpha)} (Y) < k_{2 \tau_n(\alpha)}(f) \right) \leq \alpha \ .
 \end{align*}
Combining the latter results, we obtain \emph{two sided bounds} for the estimated number of modes.
More precisely, for any $f\in X_k$ and any $\alpha \in (0,1)$ we obtain that 
\begin{align*}
  \mathbb P\left( k_{2 \tau_n(\alpha/2)}(f) \leq  k_{\tau_n(\alpha/2)} (Y) \leq k  \right) \geq 1-\alpha \ .
\end{align*}
As mentioned before, for fixed $\alpha, \kappa, v$, one has $\tau_n(\alpha)\approx 1/\sqrt n$. Therefore there exists a constant $C$ such that asymptotically (for large enough $n$) by thresholding at $C/\sqrt n$, it can be guaranteed at a level $\alpha$ that all signatures above this threshold are detected. Notice that so far we have not made use of any a priori information about $f$. This changes with \cref{stat:cor:muissspec}, which asserts that if $f \in X_k$ \emph{and} $s_{k-1}(f) \geq \epsilon$, then
\begin{align}\label{stat:eq:underest:intro}
  \mathbb P\left(  k_{\epsilon/2} (Y) = k \right) \geq 1 - 2 \exp \left(- \frac{\epsilon^2 n}{8 v + 4 \kappa \epsilon} \right) \ ,
\end{align}
i.e., the number of modes of $f$  can be estimated exponentially fast (in the number of samples) by thresholding the empirical signature \emph{provided that} one has a priori lower bounds on magnitude (in the Kolmogorov norm) of the smallest mode of $f$. Notice that this result is independent of the number of modes of $f$.

\paragraph*{Kolmogorov signatures vs.~persistence signatures} 
Kolmogorov signatures offer an alternative to persistence signatures, since they behave more robust for large errors $\epsilon_i$. The intuitive reason is that the Kolmogorov norm damps these errors, while they remain dominant using the sup norm without prefiltering. This is relevant, e.g., for unbounded noise (such as normally distributed errors, which are included in our noise model~\eqref{stat:ass:moments:intro}) or for data with outliers.

Nevertheless,  Kolmogorov signatures are not always superior to persistence signatures in terms of statistical efficiency. 
This can be seen by comparing their probabilities to detect a non vanishing signature from the data. 
To this end, we consider two limiting scenarios. The first comprises sparse signals with high peaks and small support, while the second  comprises weak signals with large support. To illustrate these scenarios, we consider 
functions with one \emph{single} mode and i.i.d.~normal errors with variance one, i.e., $\epsilon_i \sim \mathcal{N}(0,1)$. 

In the first scenario, we consider a sequence of functions 
\begin{align}\label{stat:subopt_Kolmogorov-stat-intro}
f_n(x)= \begin{cases}
 (1+\varepsilon)\sqrt{2\log n} \; \text{if} \; x\in[j/n,(j+1)/n)\ ,\\
 0 \; \text{otherwise}\ ,
 \end{cases}
\end{align}
for some $\varepsilon>0$ and for some $j\in \{0,\dots, n-1\}$ \emph{that is a priori not known}. We show in \cref{thm:Kolmogorov-suboptimal} that asymptotically (as $n\to \infty$) it is impossible to distinguish $f_n$ from the zero function by thresholding Kolmogorov signatures at $\tau_n(\alpha)$ as above. In contrast, for such signals, sup norm based thresholding of the vector $(Y_1, \dots, Y_n)$ is known to behave asymptotically minimax efficient in the sense of detecting a non vanishing mode with probability tending to one as $n\to \infty$, see, e.g.,~\cite{donoho2004,Ingster03}. Whether this efficiency carries over to persistence signatures is unknown to us.

In the second scenario, we consider a sequence of functions 
\begin{align}\label{stat:vanish_funct-intro}
f_n(x)= \begin{cases}
 \delta_n \; \text{if} \; x\in[1/3,2/3)\ ,\\
 0 \; \text{otherwise}\ ,
 \end{cases}
\end{align}
with $\delta_n \rightarrow 0$.
It is well known that it is possible to detect the single mode of $f_n$ with probability tending to one as $n\to \infty$ if $\delta_n \sqrt{n} \rightarrow \infty$, see, e.g., \cite{vanderVaart00}.
From~\eqref{stat:eq:underest:intro} it follows, using $\epsilon = \delta_n$, that Kolmogorov signatures can correctly detect the single mode of signals in~\eqref{stat:vanish_funct-intro} by thresholding signatures at $\delta_n/2$. In contrast, for persistence signatures, there exists no thresholding strategy that can detect the single mode with probability one. To be precise, let  again  $\delta_n \sqrt{n} \rightarrow \infty$,  and assume additionally $\delta_n \sqrt{\log n} \rightarrow 0$. Then \cref{stat:thm:supnorm} asserts that for an arbitrary sequence $(q_n)$ of reals one has
$
\limsup_{n \rightarrow \infty} \mathbb{P}\left(  k_{q_n} (Y) = 1 \right) < 1
$.

\paragraph*{Efficient computation using taut strings}
While our approach can in principle be extended to metric different from the ones induced by the sup or Kolmogorov norms, not every metric lends itself to an efficient computation of the requisite signatures. The difficulty is to compute the distance of a given function to the set of functions with at most $k$ modes. Using \emph{taut strings} (which are intimately related to total variation (TV) minimization~\citep{Grasmair2007Equivalence,Grasmair2008Generalizations, DavKov01, Mammen1997Locally}), we prove that the set of Kolmogorov signatures can be computed in $\mathcal{O}(n\log n)$ time, where $n$ is the number of observations. Given $f \in L^\infty([0,1])$ and $\alpha \geq 0$, the \emph{taut string}, $F_\alpha$, is the function whose graph has minimal total length (as a curve) among all absolutely continuous functions in the $\alpha$-tube around the antiderivative $F$ of $f$. Letting $f_\alpha=F_\alpha'$ denote the derivative of the taut string,  
\Cref{thm:tautStringMinModes} provides a result of independent interest  that has been implicitly used several times in the existing literature but has never been proven rigorously to our knowledge: $f_\alpha$ minimizes the number of modes among all $L^\infty$-functions in the (closed) $\alpha$-ball around $f$ with respect to the Kolmogorov norm. Indeed, our result generalizes previous results on the mode-minimizing property of $f_\alpha$, which were shown in the special context of piecewise constant functions using the Kolmogorov norm, see, e.g.,~\citep{DavKov01,Davies2004Densities,Hartigan2000,Mammen1997Locally}.

\section{Modes and signatures} 

\paragraph*{Modes}
Let $f:[0,1]\to \RR$ be an arbitrary function. In order to define the number of \emph{modes} (local maxima) of $f$, consider a finite partition $P=\{t_0, \dots, t_{|P|}\}$ of $[0,1]$ such that $0=t_0 <t_1<\dots<t_{{|P|}-1}<t_{|P|}=1$. For each $0<i<|P|$ let
\begin{align*}
\peaks(f,P,i) = 
\begin{cases} 
1 &\quad\text{if}\quad \mathrm{max}(f(t_{i-1}), f(t_{i+1}) ) < f(t_i) \\ 0 &\quad\text{else} \ .
\end{cases} 
\end{align*}
Define the number of modes of $f$ with respect to $P$ and the total number of modes of $f$ by
\begin{align*}
\peaks(f,P) =  \sum_{i=1}^{|P|-1} \peaks(f,P,i) \quad\text{and}\quad \peaks(f) = \sup_P \peaks(f,P) \ ,
\end{align*}
respectively. It is easy to see that if $f$ is constant, %
then $\peaks(f)=0$ and if $f$ is a Morse function in the classical sense (i.e., a smooth function with only nondegenerate critical points), then $\peaks(f)$ equals the (possibly infinite) number of local maxima of~$f$ on the open interval $(0,1)$. Notice that different from Morse theory, though, we are not concerned with critical values or critical points of functions; $\peaks(f)$ merely counts the \emph{number} of modes, without referring to their individual positions or values.  

\paragraph*{Metric signatures} 
We denote by $\L^\infty$ the linear space of Lebesgue-measurable essentially bounded functions on $[0,1]$. Notice that we do not regard $\L^\infty$ as a space of equivalence classes of functions. Throughout this article we work with functions in some (to be specified) set $X\subset \L^\infty$. For example, $X$ may consist of functions of bounded variation or piecewise polynomial functions. We do not a priori require $X$ to be a linear space. By $(X,d)$ we denote $X$ together with some metric, but we do not require $(X,d)$ to be a complete metric space. Additionally, we allow that $d$ attains the value $\infty$. Particular choices of $(X,d)$ will be specified below.

\begin{definition}[Metric signatures]
Let $X_k$ denote the subset of $X$ with at most $k$ modes, i.e., $X_k:= \{f\in X : \peaks(f)\leq k\}$.
Define the $k$th \emph{metric signature} of $f\in X$ as 
\begin{align*}
s_k(f) := d(f,X_k) = \inf_{g \in X_k} d(f,g) \quad\text{for}\quad k \in \NN_0 \ , 
\end{align*}
i.e., the distance of $f$ to the set of functions with at most $k$ modes. 
\end{definition}
Clearly, $X_k\subseteq X_{k+1}$ are nested models; hence, the sequence $(s_k(f))_{k\in \NN}$ is monotonically decreasing, %
and $s_k(f)$ measures the minimal distance by which $f$ needs to be moved (with respect to the metric $d$) in order to remove all but its $k$ most significant %
modes. What is considered significant and what is not, however, heavily depends on the choice of metric. 
In any case, so far we have not excluded pathologies, i.e., situations where $\peaks(f)>k$ but  $s_k(f)=d(f,X_k)=0$. Hence: %
\begin{definition}[Descriptive metric]
$(X,d)$ is called \emph{descriptive} if $\peaks(f)>k$ implies that $s_k(f)>0$ for every $f\in X$ and all $k\in \NN_0$.
\end{definition}
\paragraph*{Stability} Regardless of the concrete choice of metric, notice that distance to (arbitrary) sets in metric spaces is $1$-Lipschitz; therefore stability essentially comes for free:
\begin{lemma}[Stability of signatures]\label{intro:lem:stab}
Let $f, g \in X$. Then
\begin{align*}
|s_k(f)-s_k(g)|\leq d(f,g)
\end{align*}
for all $k \in \NN_0$.
\end{lemma}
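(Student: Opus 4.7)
The plan is to derive the inequality directly from the triangle inequality for the metric $d$, exploiting the general fact that the distance to any fixed subset of a metric space is a $1$-Lipschitz function. Since $s_k(f) = d(f, X_k) = \inf_{h \in X_k} d(f, h)$, this is a standard textbook argument, and no structural properties of $X_k$ (such as being closed, nonempty, or even descriptive) are needed beyond the fact that $X_k$ is a subset of $X$.

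Concretely, I would proceed as follows. Fix $k \in \NN_0$ and $f, g \in X$. If $X_k = \emptyset$, then $s_k(f) = s_k(g) = \infty$ (as an infimum over the empty set), and interpreting $\infty - \infty$ suitably, or assuming $X_k$ is nonempty (as is implicit since we want the signatures to be real-valued in the interesting cases), the claim is vacuous. Otherwise, pick an arbitrary $h \in X_k$. The triangle inequality for $d$ gives
\begin{equation*}
d(f, h) \leq d(f, g) + d(g, h).
\end{equation*}
Taking the infimum over $h \in X_k$ on both sides yields $s_k(f) \leq d(f, g) + s_k(g)$, hence $s_k(f) - s_k(g) \leq d(f, g)$. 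Swapping the roles of $f$ and $g$ gives $s_k(g) - s_k(f) \leq d(f, g)$, and together these yield the claim.

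There is no real obstacle here; the only mild subtlety is handling the case when $d$ takes the value $\infty$ (which the paper explicitly allows) or when $X_k$ might be empty for small $k$. In both cases the statement either becomes trivial or is naturally interpreted with the convention $\infty - \infty = 0$ on the left-hand side; since any nonempty $X_k$ suffices for finiteness and $X_0$ contains at least the constant functions in all natural choices of $X$, these edge cases cause no genuine difficulty and can be dispatched in a single sentence.
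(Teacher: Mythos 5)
Your proof is correct and is exactly the argument the paper has in mind: the paper simply remarks that distance to an arbitrary set in a metric space is $1$-Lipschitz, which is the triangle-inequality argument you spell out. Your extra care about $X_k = \emptyset$ and $d = \infty$ is reasonable but not a point of divergence.
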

Stability implies that a small perturbation of $f$ results in a small perturbation of the signatures $s_k(f)$.
\section{Persistence signatures and Kolmogorov signatures} 
In our setting a ``good'' metric is one that leads  to signatures that clearly separate significant modes (with respect to a given noise model) from insignificant ones. We investigate two choices.
\paragraph*{Persistence signatures}
One possible choice of metric is the one induced by the sup norm, i.e., $d_\infty(f,g)= \sup_x|f(x)-g(x)|$, which leads to signatures that have an interpretation in the context of persistent homology, as we show below. 
\begin{lemma}
$(X,d_\infty)$ is descriptive for every $X\subset \L^\infty$.
\end{lemma}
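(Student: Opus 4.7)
The plan is to show the contrapositive form: if $\peaks(f) > k$ then there exists some $\delta > 0$ with the property that any $g \in \L^\infty$ satisfying $\|f - g\|_\infty < \delta$ also has more than $k$ modes, which implies that no $g \in X_k$ can lie within sup-distance $\delta$ of $f$, hence $s_k(f) \geq \delta > 0$.

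To produce such a $\delta$, first I would use the definition $\peaks(f) = \sup_P \peaks(f,P)$: since $\peaks(f) > k$, there must exist a partition $P = \{t_0 < \dots < t_{|P|}\}$ of $[0,1]$ with $\peaks(f,P) \geq k+1$. Let $I \subseteq \{1, \dots, |P|-1\}$ denote the (finite, nonempty) set of indices $i$ for which $\peaks(f,P,i) = 1$, i.e., for which
\begin{align*}
f(t_i) > \max(f(t_{i-1}), f(t_{i+1})) \ .
\end{align*}
For each $i \in I$ define the strictly positive ``peak gap''
\begin{align*}
\delta_i := f(t_i) - \max(f(t_{i-1}), f(t_{i+1})) > 0 \ ,
\end{align*}
and set $\delta := \tfrac{1}{3} \min_{i \in I} \delta_i > 0$, which is well-defined since $I$ is finite.

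Next I would verify that this $\delta$ works. If $g \in \L^\infty$ satisfies $\|f - g\|_\infty < \delta$, then pointwise $|f(t) - g(t)| < \delta$ for every $t \in [0,1]$ (the finitely many values $t_i$ in particular; note we evaluate pointwise since $\L^\infty$ consists of functions, not equivalence classes). For any $i \in I$ we then get
\begin{align*}
g(t_i) &> f(t_i) - \delta \ , \\
\max(g(t_{i-1}), g(t_{i+1})) &< \max(f(t_{i-1}), f(t_{i+1})) + \delta = f(t_i) - \delta_i + \delta \leq f(t_i) - 2\delta \ ,
\end{align*}
which yields $g(t_i) > \max(g(t_{i-1}), g(t_{i+1}))$, so $\peaks(g, P, i) = 1$. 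Summing over $i \in I$ gives $\peaks(g) \geq \peaks(g, P) \geq k+1$, so $g \notin X_k$. Consequently every $g \in X_k$ satisfies $d_\infty(f,g) \geq \delta$, hence $s_k(f) \geq \delta > 0$, as desired.

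There is no real obstacle here, since $\peaks$ is defined via a sup over finite partitions and thus the entire argument reduces to a finite, elementary estimate at the finitely many partition points witnessing the modes; the only mild subtlety is the reminder (already noted in the paper) that $\L^\infty$ is treated as genuine functions rather than equivalence classes, so pointwise evaluation at the $t_i$ is legitimate.
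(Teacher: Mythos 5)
Your proof is correct and follows essentially the same idea as the paper's: fix a partition witnessing more than $k$ modes of $f$ and observe that the finitely many strict inequalities at the partition points persist under a sufficiently small sup-norm perturbation. The only difference is presentational — the paper argues by contradiction with a sequence $(f_n)\to f$ in $X_k$, whereas you give the direct, quantitative version with an explicit $\delta$ — so no further comment is needed.
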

\begin{proof}
Being descriptive is equivalent to $X_k$ being closed in $X$ for all $k$. Suppose that there exists $k\in \NN_0$ such that $X_k$ is not closed, i.e., there exist $f \in X\setminus X_k$ and a sequence $(f_n)$ in $X_k$ with $d_\infty(f_n,f)\to 0$. Since $f \notin X_k$, there exists a partition $P=\{t_0, \dots, t_{|P|}\}$ of $[0,1]$  and some index set $I$ with $k<|I|<|P|$ such that $\mathrm{max}(f(t_{i-1}), f(t_{i+1}) ) < f(t_i)$ for all $i\in I$. Since $d_\infty(f_n,f)\to 0$, there exists $N\in \NN$ such that $\mathrm{max}(f_n(t_{i-1}), f_n(t_{i+1}) ) < f_n(t_i)$ for all $n\geq N$ and all $i\in I$. Contradiction. 
\end{proof}
The following lemma makes precise the relation between topological persistence and our notion of metric signatures for the sup norm. %
\begin{lemma}\label{lemma:persDiagam}
Let $X$ be a space of tame functions, i.e., $H_*(f^{-1}(-\infty,t])$ has finite rank for all $f\in X$ and all $t\in\RR$, and every $f\in X$ has a finite number of homologically critical values. 
Order the finite persistence values (counted with multiplicity) of some $f\in X$ according to their persistence, from highest to lowest, yielding a persistence sequence $(p_k(f))_{k\geq1}$. Using $d_\infty$ yields $p_k(f) = 2 s_{k-1}(f)$ for all $k\geq 1$.
\end{lemma}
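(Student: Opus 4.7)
The plan is to prove the two inequalities $p_k(f) \leq 2 s_{k-1}(f)$ and $p_k(f) \geq 2 s_{k-1}(f)$ separately, each relying on one of the two ingredients already collected in the introduction: bottleneck stability for the upper bound and the optimal perturbation result of \cite{Bauer2012Optimal} for the lower bound.

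The first step is to identify finite persistence pairs of a tame $f:[0,1]\to \RR$ with modes of $f$. I would argue that, since the domain is a connected interval, the $H_0$-sublevelset filtration creates one essential class (its birth happening at the global minimum), births new components at every remaining local minimum, and merges components at every inner local maximum. Non-trivial $H_1$ does not occur. Hence the number of finite persistence pairs equals exactly the number of inner local maxima, i.e.\ $\peaks(g)$. In particular, $g\in X_{k-1}$ implies that $g$ has at most $k-1$ finite persistence pairs, so $p_k(g)=0$ (using the convention that persistence sequences are extended by zeros).

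For the upper bound I would use bottleneck stability \cite{CohenSteiner2007Stability}: an optimal matching between the persistence diagrams of $f$ and $g$ has cost at most $\|f-g\|_\infty$, which in particular implies $|p_k(f)-p_k(g)|\leq 2\|f-g\|_\infty$ for the sorted persistence values. Together with $p_k(g)=0$ for $g\in X_{k-1}$, this gives $p_k(f)\leq 2\|f-g\|_\infty$, and taking the infimum over $X_{k-1}$ yields $p_k(f)\leq 2s_{k-1}(f)$. For the lower bound I would invoke the optimal perturbation statement already cited from \cite{Bauer2012Optimal}: for any $\delta>0$, one can find $g$ with $\|f-g\|_\infty \leq p_k(f)/2+\delta$ such that $g$ has no persistence pair of value $\leq p_k(f)+2\delta$. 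By the identification of the first step, $g$ then has at most $k-1$ modes, hence $g\in X_{k-1}$ and $s_{k-1}(f)\leq p_k(f)/2+\delta$. Letting $\delta\to 0$ completes the proof.

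The main obstacle, and the only point that really needs care, is the correspondence between the combinatorial notion $\peaks$ used in the paper and the homological count of finite pairs for a general tame function; here the tameness assumption (finitely many homologically critical values, finite-rank homology of sublevel sets) is precisely what allows one to reduce the analysis to a finite sequence of birth/death events at critical values and to invoke the elder-rule pairing. Once this identification is in place, both inequalities are immediate consequences of the two stability/optimality results already quoted in the introduction, and they pin down $p_k(f) = 2 s_{k-1}(f)$.
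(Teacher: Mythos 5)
Your argument follows essentially the same route as the paper: the upper bound via bottleneck stability \cite{CohenSteiner2007Stability} combined with $p_k(g)=0$ for $g\in X_{k-1}$, and the lower bound via the optimal-perturbation result of \cite{Bauer2012Optimal}. Your first step, spelling out why the finite $H_0$-pairs of a tame function on $[0,1]$ are in bijection with its inner local maxima (so that $g\in X_{k-1}$ forces $p_k(g)=0$), is left implicit in the paper's proof, and making it explicit is a genuine improvement, since this is where the combinatorial quantity $\peaks$ meets the homological one.

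One point needs tightening in your lower bound. From the statement ``$g$ has no persistence pair of value $\leq p_k(f)+2\delta$'' alone you cannot conclude that $g$ has at most $k-1$ pairs: a function with no short bars could still have arbitrarily many long ones, and stability only forces each long bar of $g$ to match a bar of $f$ of \emph{positive} persistence, of which $f$ may have many more than $k-1$. What you actually need, and what \cite{Bauer2012Optimal} provides, is the stronger assertion that the perturbation removes all pairs of persistence $\leq 2\delta$ \emph{without increasing the number of remaining pairs}; the paper's proof quotes exactly this clause. With it, taking $\delta=\tfrac12 p_k(f)$ leaves $g$ with at most $k-1$ finite pairs, hence $g\in X_{k-1}$ and $s_{k-1}(f)\leq \tfrac12 p_k(f)$. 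Since you are invoking the same cited theorem, this is a matter of quoting its full strength rather than a flaw in the approach.
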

\begin{proof}
Let $k\geq 1$. We first claim that $p_k(f) \leq 2 s_{k-1}(f)$. Let $(f_n)$ be a sequence in $X_{k-1}$ with $d_\infty(f_n,f) \leq s_{k-1}(f)+\frac 1 n$.  Notice that $p_k(g) = 0$ for all $g \in X_{k-1}\subset X$. By the stability theorem for persistence diagrams~\cite{CohenSteiner2007Stability}, one has $|p_k(g)-p_k(f)| \leq 2d_\infty(f,g)$ for all $f,g \in X$. Together these facts imply that
\begin{align*}
p_k(f) = |p_k(f) -p_k(f_n)|\leq 2d_\infty(f,f_n)\leq 2s_{k-1}(f) + \frac 2 n \ ,
\end{align*}
which proves the first claim.

To see that $p_k(f) \geq 2 s_{k-1}(f)$, observe that the bound provided by the stability theorem is tight in dimensions less or equal to $2$, see~\cite{Bauer2012Optimal}. Indeed, if $f$ is tame, then by moving $f$ by at most $\delta$ in the sup norm, it is possible to remove all its persistence pairs with persistence less or equal to $2\delta$ \emph{without} increasing the number of remaining persistence pairs. Hence there exists a function $g\in X_{k-1}$ with $d_\infty(g,f)\leq\frac12p_{k}(f)$, which implies that $s_{k-1}(f) = d_\infty(f,X_{k-1}) \leq d_\infty(f,g) \leq \frac12p_{k}(f)$.
\end{proof}

\paragraph*{Kolmogorov signatures}
For reasons that will become evident in the next section, we propose an alternative to persistence signatures, which we call \emph{Kolmogorov signatures}. Let $\L^1$ denote the space of Lebesgue-integrable functions on $[0,1]$. Due to compactness of $[0,1]$, we have that $\L^\infty \subset \L^1$.  The \emph{Kolmogorov distance}, $d_K$,  is defined as follows. Let $f, g \in \L^\infty$, and let $F,G$ denote the respective antiderivatives, where, as a convention, we require that  $F(0)=G(0)= 0$. Define 
\begin{align*}
d_K(f,g) := d_\infty(F,G)  \ .
\end{align*}
Notice that $d_K$ does not induce a metric on arbitrary subsets $X\subset \L^\infty$ since if $f=g$ almost everywhere (a.e.), then $d_K(f,g)=0$. Therefore, we work with a unique representative in each equivalence class of a.e.~identical functions by requiring that
\begin{align}\label{eq:uniqueRep}
X\subset \L:=\left\{f \in \L^\infty: f(t) = \lim_{\epsilon \to 0}\inf_{0<\delta<\epsilon} \frac{1}{t^+(\delta) -t^-(\delta)}\int_{t^-(\delta)}^{t^+(\delta)}f(s) \, ds \;\text{for all}\; t\in[0,1]\right\}  \ ,
\end{align}
where $t^-(\delta) = \max(0,t-\delta)$ and $t^+(\delta)=\min(1, t+\delta)$.

There indeed exists a (unique) representative in $\L$ for every equivalence class of a.e. identical functions in $\L^\infty$, since the right hand side of~\eqref{eq:uniqueRep} exists (and is finite) for all $t\in [0,1]$ and all $f\in \L^\infty$, and since Lebesgue's differentiation theorem asserts that every $f\in \L^1$ satisfies 
\begin{align*}
f(t) = \lim_{\delta \to 0} \frac{1}{2\delta}\int_{t-\delta}^{t+\delta}f(s) \, ds \quad\text{a.e.~on $(0,1)$} \ .
\end{align*}
We thus obtain a projection operator $\mathcal{P}:\L^\infty \to \L\subset \L^\infty$. Notice, however, that $\L$ is not a linear space, since $f\in \L$ does not necessarily imply that $-f\in \L$. %
Nonetheless, we may of course choose linear subspaces $X\subset \L$ for  specific applications. %

The following lemma further motivates our choice of $\L$.
\begin{lemma}\label{lemma:minModes}
For any class $[f]$ of a.e.~identical functions in $\L^\infty$, its unique representative $\mathcal{P}(f)\in \L$ minimizes the number of modes within that class. 
\end{lemma}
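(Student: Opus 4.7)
Write $\bar f := \mathcal{P}(f)$. The plan is to show $\peaks(\bar f)\leq \peaks(g)$ for every representative $g\in[f]\cap\L^\infty$. Set $A_\delta(t) := \frac{1}{t^+(\delta)-t^-(\delta)}\int_{t^-(\delta)}^{t^+(\delta)}g(s)\,ds$. Since the right-hand side of~\eqref{eq:uniqueRep} is defined via integral averages and is therefore invariant under a.e.\ equality, one has $\bar f(t) = \liminf_{\delta \to 0^+} A_\delta(t)$ for every such $g$.

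From this I would extract two approximation statements for $g$: for every $t\in[0,1]$ and every $\eta>0$, every neighborhood of $t$ contains (i) a point $\tau$ with $g(\tau)\geq \bar f(t)-\eta$, and (ii) a point $\tau$ with $g(\tau)\leq \bar f(t)+\eta$. Indeed, for all sufficiently small $\delta$ one has $A_\delta(t)>\bar f(t)-\eta$, while along some sequence $\delta_n\to 0^+$ one has $A_{\delta_n}(t)<\bar f(t)+\eta$; in each case the set of $\tau$ in $[t^-(\delta),t^+(\delta)]$ on which $g$ lies on the desired side of the average has positive measure.

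The main step is a construction. Fix a partition $P=\{t_0<\dots<t_n\}$ attaining $\peaks(\bar f)=m$, with modes at $t_{i_1},\dots,t_{i_m}$. For each of the $m+1$ ``valleys'' (before the first mode, between consecutive modes, and after the last mode), the mode conditions at the flanking modes force the minimum of $\bar f$ over the intervening partition points to lie strictly below $\bar f$ at each flanking mode. Let $t^{(k)}$ attain this minimum on the $k$-th valley, and let $\Delta>0$ be the smallest resulting mode-to-adjacent-valley gap. Fix $\eta<\Delta/2$, choose pairwise disjoint neighborhoods around the points $t_{i_k}$ and $t^{(k)}$, and in them use (i) and (ii) to pick $\sigma_k$ with $g(\sigma_k)\geq \bar f(t_{i_k})-\eta$ and $\rho_k$ with $g(\rho_k)\leq \bar f(t^{(k)})+\eta$, respectively. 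The partition $P' = \{0,\rho_0,\sigma_1,\rho_1,\dots,\sigma_m,\rho_m,1\}$ then exhibits $m$ interior modes of $g$, so $\peaks(g)\geq m$. The case $\peaks(\bar f)=\infty$ is handled by applying the same construction to partitions of $\bar f$ witnessing arbitrarily many modes.

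The main obstacle I anticipate is the bookkeeping at the endpoints: the outermost valleys may have $t^{(0)}=0$ or $t^{(m)}=1$, so $\rho_0$ (resp.\ $\rho_m$) must lie strictly inside $(0,1)$ while having $g$-value close to $\bar f(0)$ (resp.\ $\bar f(1)$). This is still covered by (ii): at $t=0$ the relevant interval in~\eqref{eq:uniqueRep} is $[0,\delta]$, and a positive-measure subset of $(0,\delta]$ has $g\leq A_\delta(0)$; the right endpoint is analogous.
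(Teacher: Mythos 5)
Your proof is correct and follows essentially the same route as the paper's: use the averaging definition of $\L$ to find, arbitrarily close to each relevant partition point, points where the arbitrary representative $g$ lies just above the value of $\mathcal{P}(f)$ at a mode and just below it at a valley, then assemble these into a partition witnessing at least as many modes of $g$. Your version is in fact more carefully quantified than the paper's (which compresses the construction into ``continuing this way''), and your endpoint bookkeeping for $\rho_0$ and $\rho_m$ is a valid resolution of the one detail the paper glosses over.
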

\begin{proof}
Let $f \in \L^\infty$ with representative $\tilde f:= \mathcal{P}(f)\in \L$. We show that $\peaks(\tilde f)\leq \peaks [f]$. Consider any finite partition $P$ of $[0,1]$ and assume that $t_i$ counts a mode of $\tilde f$, i.e., $\tilde f(t_i)-\max(\tilde f(t_{i-1}), \tilde f(t_{i+1})) > \epsilon$ for some $\epsilon>0$. Consider any open neighborhood $U_i$ of $t_i$. Since  
\begin{align*}
\tilde f(t_i) = \lim_{\epsilon \to 0}\inf_{0<\delta<\epsilon} \frac{1}{t_i^+(\delta) -t_i^-(\delta)}\int_{t_i^-(\delta)}^{t_i^+(\delta)}f(s) \, ds \ ,
\end{align*}
there must be some $t\in U_i$ with $f(t) \geq \tilde f(t_i) - \epsilon/2$. Since $U_i$ can be chosen arbitrarily small, there exists $t'_i$ arbitrarily close to $t_i$ such that  $f(t'_i) \geq \tilde f(t_i) - \epsilon/2$. By the same argument, there exist $t'_{i-1}$ and $t'_{i+1}$ arbitrarily close to $t_{i-1}$ and $t_{i+1}$, respectively, such that $f(t'_{i-1}) \leq \tilde f(t_{i-1}) + \epsilon/2$ and $f(t'_{i+1}) \leq \tilde f(t_{i+1}) + \epsilon/2$. By our choice of $\epsilon$ this implies $f(t'_i)>\max(f(t'_{i-1}), f(t'_{i+1}))$. Continuing this way yields a partition $P'$ with $\peaks(f,P') \geq \peaks(\tilde f, P)$.
\end{proof}
\begin{lemma}
$(X,d_K)$ is descriptive for every $X\subset \L$.
\end{lemma}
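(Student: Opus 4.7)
I would show equivalently that every $X_k \subset (X, d_K)$ is closed. Suppose $(f_n) \subset X_k$ with $d_K(f_n, f) \to 0$, i.e., the antiderivatives $F_n$ converge uniformly to $F$. Arguing by contradiction, assume $\peaks(f) \ge k+1$, witnessed by a partition $P$ with modes at $t_{i_1}, \ldots, t_{i_{k+1}}$. First, I enlarge $P$ to a partition $Q$ whose interior points form an ordered alternating sequence $\ell, p_1, v_1, p_2, v_2, \ldots, v_k, p_{k+1}, r$: peaks $p_j = t_{i_j}$, valleys $v_j \in (p_j, p_{j+1})$ chosen as the $P$-minimizer between consecutive peaks so that $f(v_j) < \min(f(p_j), f(p_{j+1}))$, and the two outer $P$-neighbors $\ell = t_{i_1-1}$ and $r = t_{i_{k+1}+1}$. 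Each $p_j$ remains a mode of $Q$, and I pick $\epsilon > 0$ small enough that $f(p_j) - f(q) > 10\epsilon$ for every $Q$-neighbor $q$ of every $p_j$.

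The key tool is the $\liminf$-of-averages description \eqref{eq:uniqueRep} of $\L$. Writing $D_\delta(t) := (F(t^+(\delta)) - F(t^-(\delta)))/(t^+(\delta) - t^-(\delta))$, one has $f(t) = \liminf_{\delta \to 0} D_\delta(t)$. From this, at each peak $p_j$ there is $\delta_j^* > 0$ such that $D_\delta(p_j) > f(p_j) - \epsilon$ for \emph{every} $\delta < \delta_j^*$, while at each non-peak $Q$-point $q$ there exist \emph{arbitrarily small} $\tilde\delta_q$ with $D_{\tilde\delta_q}(q) < f(q) + \epsilon$. I choose a common $\delta < \min_j \delta_j^*$ for the peaks and then $\tilde\delta_q < \delta$ at each non-peak, sufficiently small that the resulting intervals $I_{p_j}$ and $I_q$ are pairwise disjoint and contained in $[0,1]$. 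Since the denominators in $D_\delta$ are fixed and positive, uniform convergence $\|F_n - F\|_\infty \to 0$ forces $D_\delta^{(n)}(p_j) \to D_\delta(p_j)$ and $D_{\tilde\delta_q}^{(n)}(q) \to D_{\tilde\delta_q}(q)$, so for $n$ large the $f_n$-averages over $I_{p_j}$ exceed $f(p_j) - 2\epsilon$ while those over $I_q$ stay below $f(q) + 2\epsilon$.

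A simple average-to-pointwise observation (if $f_n \le c$ pointwise throughout $I$, then the average over $I$ is $\le c$) extracts points $s_j \in I_{p_j}$ with $f_n(s_j) > f(p_j) - 3\epsilon$ and $\tilde s_q \in I_q$ with $f_n(\tilde s_q) < f(q) + 3\epsilon$; the relevant super-/sub-level sets have positive measure in each interval, so I can insist that the chosen points lie in the open interval $(0,1)$ even when $q \in \{\ell, r\}$ coincides with a boundary point. By construction the points respect the order of $Q$ and form a valid partition $Q' = \{0, \tilde s_\ell, s_1, \tilde s_{v_1}, \ldots, s_{k+1}, \tilde s_r, 1\}$. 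The $10\epsilon$-gap gives $f_n(s_j) - f_n(\tilde s_q) > 4\epsilon > 0$ at each adjacent peak-neighbor pair, so $\peaks(f_n) \ge \peaks(f_n, Q') \ge k+1$, contradicting $f_n \in X_k$.

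The main obstacle is bridging the measure-theoretic information in $d_K$ (which only sees antiderivatives) with the pointwise definition of modes. The $\L$-representation \eqref{eq:uniqueRep} is the essential lever, and one has to navigate the asymmetry of $\liminf$: upper control $D_\delta \le f+\epsilon$ holds only for \emph{some} small $\delta$ at valleys, while lower control $D_\delta \ge f - \epsilon$ holds for \emph{all} sufficiently small $\delta$ at peaks. This dictates that the peak scale $\delta$ be chosen first and the valley scales $\tilde\delta_q$ then selected smaller, which in turn is what allows the perturbed peak/valley intervals to sit in the correct order for the new partition $Q'$.
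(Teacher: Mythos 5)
Your proposal is correct and follows essentially the same route as the paper: both arguments reduce descriptiveness to closedness of $X_k$, use the averaging representation \eqref{eq:uniqueRep} of $\L$ to convert Kolmogorov-closeness of antiderivatives into control of local averages near the peaks and their neighbors, and then extract witness points to build a partition certifying at least $k+1$ modes for the nearby function. The only (immaterial) difference is that you phrase it as sequential closedness of $X_k$ along a convergent sequence, whereas the paper exhibits an explicit Kolmogorov ball of radius $\tfrac14\epsilon\delta'$ around $f$ contained in $X\setminus X_k$.
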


\begin{proof}
We show that $X\setminus X_k$ is open wrt.~the Kolomogorov metric. Let $f\in X\setminus X_k$. Then there exists a finite partition $P=\{t_0, \dots, t_{|P|}\}$ of $[0,1]$  and some index set $I$  with $k<|I|<|P|$ such that $f(t_i) - \max(f(t_{i-1}),f(t_{i+1}))>\epsilon$ for all $i\in I$ and some small enough $\epsilon>0$.  Without loss of generality, we assume that $t_{i-1} >0$ and $t_{i+1}<1$ for all $i\in I$. Let $\delta>0$ be small enough such that for all $i \in I$ the intervals $[t_{i-1}-\delta, t_{i-1}+\delta]$, $[t_{i}-\delta, t_{i}+\delta]$, and $[t_{i+1}-\delta, t_{i+1}+\delta]$ are contained in $[0,1]$ and are mutually disjoint. Additionally, for all  $i\in I$ and all $j \in \{-1,0,1\}$ let $\delta_{i,j}\leq \delta$ be such that 
\begin{align*}
\left |f(t_{i+j})-\frac{1}{2\delta_{i,j}}\int_{t_{i+j}-\delta_{i,j}}^{t_{i+j}+\delta_{i,j}} f(s) ds\right | < \frac{\epsilon}{4} \ .
\end{align*}
Let $\delta'= \min_{i\in I,  j \in \{-1,0,1\}} \delta_{i,j}$. Let $g\in X$ with $d_K(f,g) < \frac1 4 \epsilon \delta'$. Then 
\begin{align*}
\left |\int_a^b (f(s) -g(s)) ds\right | < \frac 1 2 \epsilon \, \delta'
\end{align*}
for all $0\leq a < b \leq 1$. Hence,
\begin{align*}
\left |f(t_{i+j})-\frac{1}{2\delta_{i,j}}\int_{t_{i+j}-\delta_{i,j}}^{t_{i+j}+\delta_{i,j}} g(s) ds\right | < \frac{\epsilon}{4} +\frac{\epsilon \, \delta'}{4\delta_{i,j}} \leq \frac{\epsilon}{2} 
\end{align*}
for all  $i\in I$ and all $j \in \{-1,0,1\} $.
Therefore, there exists $t_i' \in [t_i-\delta_{i,0}, t_i+\delta_{i,0}]$ with $g(t_i') > f(t_i)-\frac \epsilon 2$. Likewise, there exist $t_{i\pm 1}' \in [t_{i\pm 1}-\delta_{i\pm 1, \pm 1}, t_{i\pm 1}+\delta_{i\pm 1, \pm1}]$ with $g(t_{i\pm 1}') < f(t_{i\pm 1})+\frac \epsilon 2$.  Thus there exists a partition $P'$ of $[0,1]$ with $\peaks [g, P']$> k, i.e., $g \in X\setminus X_k$. Since $\epsilon$ and $\delta'$ only depend on $f$ and since $g$ was chosen arbitrarily in the open Kolmogorov-ball 
of radius $\frac1 4 \epsilon \delta'$ around $f$, this ball is contained in $X\setminus X_k$.
\end{proof}

\begin{figure}[t]
\center{
  \includegraphics[width= \columnwidth]{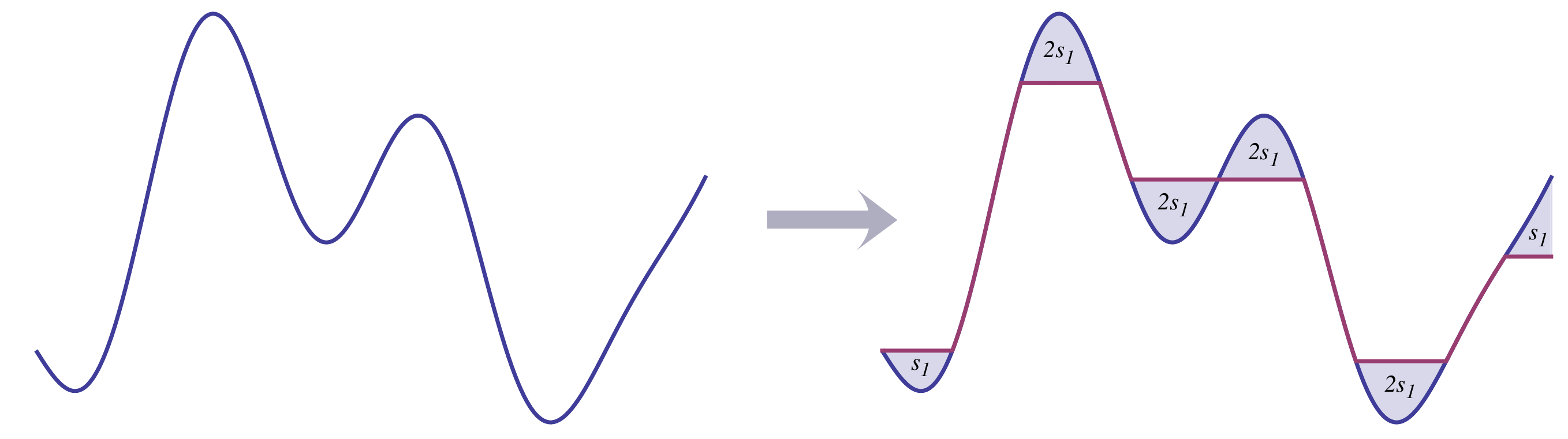}
 }
\caption{A function with exactly two modes (left) and its closest function with exactly one mode w.r.t.~the Kolmogorov norm (right, in purple). Notice that the attendant Kolmogorov signature, $s_1$, for removing the smallest mode of $f$, can be read off from the light-blue areas. The purple function is computed using \emph{taut strings} (see \cref{sec:taut-strings}).}\label{fig:Kolmogorov-vis}
\end{figure}

\Cref{fig:Kolmogorov-vis} offers a visualization for a function with two modes and its closest function with a single mode with respect to the Kolmogorov norm. Before elaborating on how to compute Kolmogorov signatures, though, we examine their statistical properties.

\section{Statistical perspective} \label{sec:stat}

Throughout this section we assume that the noise $(\epsilon_i)$ in Model \eqref{intro:model} is independently distributed with mean zero such that for
some $\kappa>0$, $v>0$ and all $m\geq 2$,
\begin{align}\label{stat:ass:moments}
 \mathbb{E}\left[\left|\epsilon_i\right|^m\right] \leq v m ! \kappa^{m-2}/2 \; \text{ for all} \;i=1,\ldots,n \ .
\end{align}
Distributions which satisfy \eqref{stat:ass:moments} include the centered normal distribution with variance $\sigma^2>0$, the (centered) Poisson distribution with intensity $\lambda$, or the Laplace distribution with variance $2 \lambda^2$. Moreover, any symmetric distribution around zero with compact support is covered by \eqref{stat:ass:moments}, including the uniform distribution
on an interval $[-B,B]$.

\subsection{Thresholding Kolmogorov signatures}

In this subsection we prove an exponential deviation inequality for the empirical Kolmogorov signatures (\cref{stat:thm:ineqbernst}), which allows us to construct uniform confidence bands for the unknown signatures $(s_{\!j\,}(f))_{j \in \mathbb N _ 0}$. More precisely, we provide a data dependent sequence of intervals $(I^{(n)}(\alpha)_j)_{j \in \mathbb N _0}$ that covers the (unknown) signatures with probability at least $1-\alpha$.

Let $f\in X_k\subset X\subset \L$, with $\L$ defined in \eqref{eq:uniqueRep}, have (an unknown number of) exactly $k$ modes. As stressed in the introduction, 
we do not aim at estimating the regression function $f$ itself but rather at inferring directly the sequence of signatures $s_{\!j\,}(f)$ together with the number of modes $k$ in such a way that estimates for these quantities can be provided at a prespecified error rate. 
This can be achieved by properly thresholding the sequence of empirical signatures.

In our analysis we consider equidistant sampling points $i/n$ and piecewise constant functions $f^{(n)}:[0,1]\to \RR$ defined as
\begin{align*}
f^{(n)}(t) = \sum_{i=0}^{n-1} \mathbf{1}_{\left[\frac{i}{n},\frac{i+1}{n}\right)}(t) f\left(\frac{i}{n}\right) \ .
\end{align*}
We define $X_j^{(n)}$ as the corresponding set of piecewise constant functions with at most $j$ modes, and we call $s_{\!j\,}(f^{(n)}) = \mathrm{dist}(f^{(n)}, X_j^{(n)})$ the \emph{quantized signature} of $f$.
Further, for the observation vector $Y=(Y_1,\ldots,Y_{n})$ we define the piecewise constant function
\begin{align*}
Y^{(n)}(t) = \sum_{i=1}^{n} \mathbf{1}_{\left[\frac{i-1}{n},\frac{i}{n}\right)}(t) Y_{i} \ .
\end{align*}
In the following, we call $s_{\!j\,}(Y^{(n)})$ the \emph{empirical signatures}.

\paragraph*{Function spaces}
In principle, the results of this subsection hold for any function space $X\subset \L$ as long as one can control the distance $d_K(f,f^{(n)})$ between $f$ and the quantized function $f^{(n)}$.
Accordingly, all subsequent results are formulated for the quantized signatures $s_{\!j\,}(f^{(n)})$. From those, the corresponding statements concerning $s_{\!j\,}(f)$ can be obtained along the following reasoning. Consider the (deterministic) approximation error between $f^{(n)}$ and $f$ in terms of the Kolmogorov metric
\begin{align}\label{stat:eq:approxerr}
 d_K(f,f^{(n)}) = \sup_{s\in[0,1]} \left| \int_{0}^{s} f(t)-f^{(n)}(t) dt \right| \ .
\end{align}
Then, due to \cref{intro:lem:stab} and the triangle inequality, it follows that
\begin{align*}
 \max_{j\in\mathbb{N}_0} |s_{\!j\,}(Y^{(n)}) -s_{\!j\,}(f)| \leq \max_{j\in\mathbb{N}_0} |s_{\!j\,}(Y^{(n)}) -s_{\!j\,}(f^{(n)})| + d_K(f, f^{(n)}) \ .
\end{align*}
Therefore, if $d_K(f, f^{(n)})$ is known, then the subsequent estimates on $|s_{\!j\,}(Y^{(n)}) -s_{\!j\,}(f^{(n)})|$ can readily be modified to obtain estimates on $|s_{\!j\,}(Y^{(n)}) -s_{\!j\,}(f)|$. E.g., if $f$ H\"older continuous, i.e., 
\begin{align*}
 \left| f(x) - f(y) \right| \leq C \left| x-y \right|^{\gamma} \quad \forall {(x,y)\in [0,1]}, \quad \gamma > 0 \ ,
\end{align*}
then 
\begin{align} \label{stat:eq:apprerr}
 d_K(f,f^{(n)}) \leq \frac{C}{\gamma+1} n^{-\gamma},
\end{align}
so that the approximation error is of order $n^{-\gamma}$. Hence, due to \cref{intro:lem:stab},
\begin{align*}
 \max_{j\in\mathbb{N}_0} |s_{\!j\,}(Y^{(n)}) -s_{\!j\,}(f)| \leq
\max_{j\in\mathbb{N}_0} |s_{\!j\,}(Y^{(n)}) -s_{\!j\,}(f^{(n)})|+ O (n^{-\gamma}) \ ,
\end{align*}
and all subsequent estimates and results can be modified accordingly.

\paragraph*{Statistical inference of signatures and modes without a priori information}
We return to our initial goal of providing tools for statistical inference on the signatures and modes. We start with investigating how well the empirical signatures $s_{\!j\,}(Y^{(n)})$ estimate the quantized signatures $s_{\!j\,}(f^{(n)})$. To this end, we control $d_K(f^{(n)},Y^{(n)})$ by the following exponential deviation bound, which is a direct consequence of \cite[Theorem B.2]{Rio2010Theorie}.
\begin{theorem} \label{stat:thm:ineqbernst}
Assume the moment condition in \eqref{stat:ass:moments}. Then, for any $\delta>0$ and any $f\in X$, one has
\begin{align*}
 \mathbb P \left( \max_{j\in \NN_0} |s_{\!j\,}(Y^{(n)})- s_{\!j\,}(f^{(n)})| \geq \delta \right)  \leq 2 \exp\left( -  \frac{\delta^2 n}{2v +2\kappa \delta}\right) \ .
\end{align*}
\end{theorem}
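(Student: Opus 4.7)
The plan is to bound the maximum deviation of signatures by the Kolmogorov distance between the noisy and noiseless piecewise constant functions, then reduce this distance to a maximum of partial sums of the noise, and finally invoke a Bernstein-type maximal inequality.

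First, by the stability statement in \cref{intro:lem:stab}, $|s_{\!j\,}(Y^{(n)})-s_{\!j\,}(f^{(n)})| \leq d_K(Y^{(n)}, f^{(n)})$ for every $j\in\NN_0$, so
\[
\max_{j\in\NN_0}|s_{\!j\,}(Y^{(n)})-s_{\!j\,}(f^{(n)})| \;\leq\; d_K(Y^{(n)},f^{(n)}).
\]
Hence it is enough to prove the claimed bound for $\mathbb P\!\left(d_K(Y^{(n)},f^{(n)})\geq \delta\right)$.

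Second, I would compute this Kolmogorov distance explicitly. Since both $f^{(n)}$ and $Y^{(n)}$ are piecewise constant on the equipartition $\{i/n\}_{i=0}^{n}$, the difference $h:=Y^{(n)}-f^{(n)}$ is piecewise constant, with values (after aligning indices with the regression model \eqref{intro:model}) equal to the noise variables $\epsilon_i$ on each subinterval of length $1/n$. The antiderivative $H(s):=\int_{0}^{s} h(t)\,dt$ is therefore piecewise linear with break points at $s=k/n$, so its supremum norm on $[0,1]$ is attained at a break point. This yields
\[
d_K(Y^{(n)}, f^{(n)}) \;=\; \sup_{s\in[0,1]}|H(s)| \;=\; \frac{1}{n}\,\max_{0\leq k\leq n}\Bigl|\sum_{i=1}^{k}\epsilon_i\Bigr|.
\]

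Third, I would invoke the Bernstein-type maximal inequality \cite[Theorem B.2]{Rio2010Theorie}, which under the moment condition \eqref{stat:ass:moments} states that, for independent centered $\epsilon_i$ and every $t>0$,
\[
\mathbb P\!\left(\max_{1\leq k\leq n}\Bigl|\sum_{i=1}^{k}\epsilon_i\Bigr|\geq t\right) \;\leq\; 2\exp\!\left(-\frac{t^{2}}{2nv+2\kappa t}\right).
\]
Specialising to $t=n\delta$ (so that $d_K(Y^{(n)},f^{(n)})\geq\delta$ iff the maximum exceeds $n\delta$) gives the exponent $-n^{2}\delta^{2}/(2nv+2\kappa n\delta) = -n\delta^{2}/(2v+2\kappa\delta)$, which is exactly the claimed bound.

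The only non-routine step is the identification of $d_K(Y^{(n)}, f^{(n)})$ with the partial-sum maximum, which rests on the piecewise-linear structure of the antiderivative; after that, the cited maximal Bernstein inequality (the sub-exponential analogue of Kolmogorov's maximal inequality) does all the probabilistic work, and only the bookkeeping of the Bernstein constants remains.
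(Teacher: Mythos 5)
Your proposal is correct and follows essentially the same route as the paper's proof: stability of metric signatures reduces the problem to bounding $d_K(Y^{(n)},f^{(n)})$, which is identified with $\tfrac1n\max_k\bigl|\sum_{i=1}^k\epsilon_i\bigr|$, and then the maximal Bernstein inequality of Rio is applied. The extra detail you give on why the supremum of the piecewise linear antiderivative is attained at a break point is a welcome but routine elaboration of the paper's one-line identification.
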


\begin{proof}
By stability of metric signatures (\cref{intro:lem:stab}), we have that 
\begin{align*}
 \mathbb P \left( \max_{j\in \NN_0} |s_{\!j\,}(Y^{(n)})- s_{\!j\,}(f^{(n)})| \geq \delta \right) \leq \mathbb P \left( d_K(f^{(n)},Y^{(n)}) \geq \delta \right) \ .
\end{align*}
Let $S_k=\sum_{i=1}^{k}\epsilon_i$ and observe that $d_K(Y^{(n)},f^{(n)})=\max_k |S_k|/n$. From \cite[Theorem B.2]{Rio2010Theorie} we obtain
\begin{align*}
 \mathbb P \left( d_K(Y^{(n)},f^{(n)}) \geq \delta \right) = \mathbb P \left( \max_k |S_k| \geq \delta n \right) \leq 2 \exp\left( - \frac{\delta^2 n}{2v +2\kappa \delta}\right) \ .
\end{align*}
\end{proof}

This results shows that the empirical signatures $s_{\!j\,}(Y^{(n)})$ are close to the quantized signatures $s_{\!j\,}(f^{(n)})$ with high probability \emph{simultaneously} for all $j\in\NN_0$.

\begin{figure}[t]
 \includegraphics[width= 0.45\columnwidth]{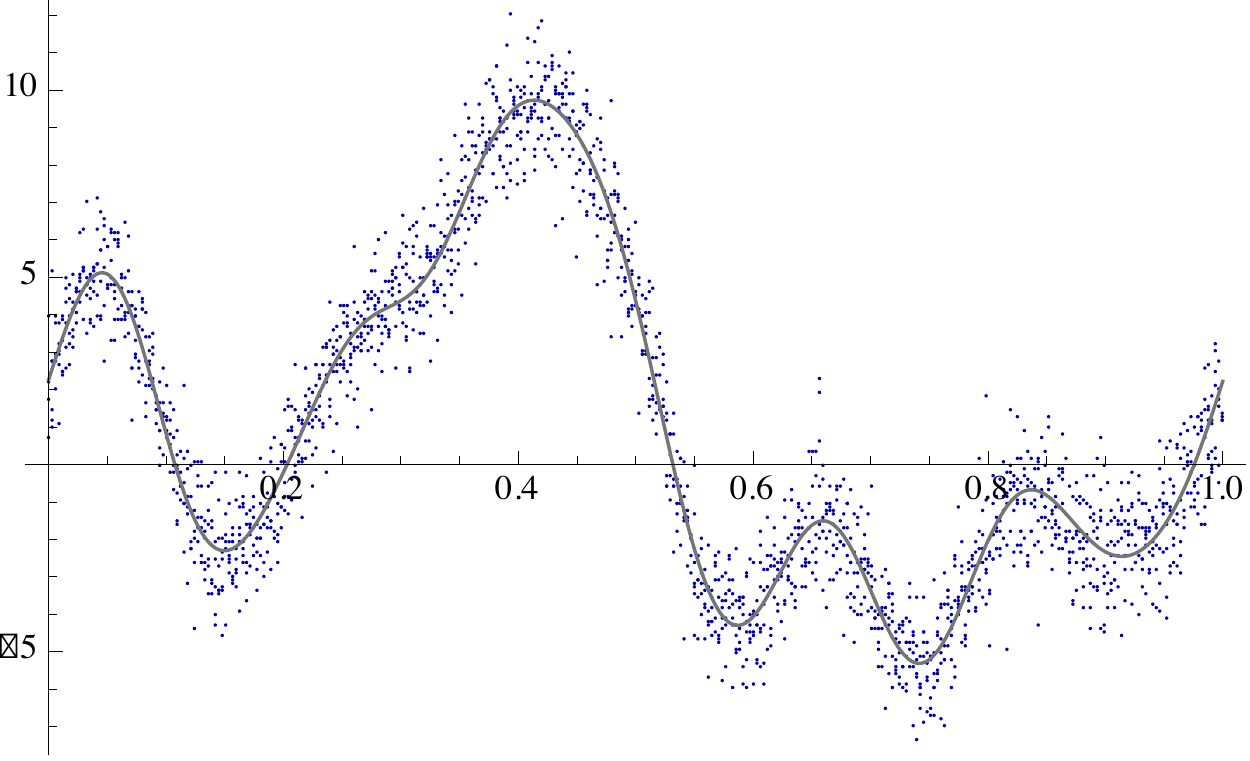}
 \quad
 \includegraphics[width= 0.45\columnwidth]{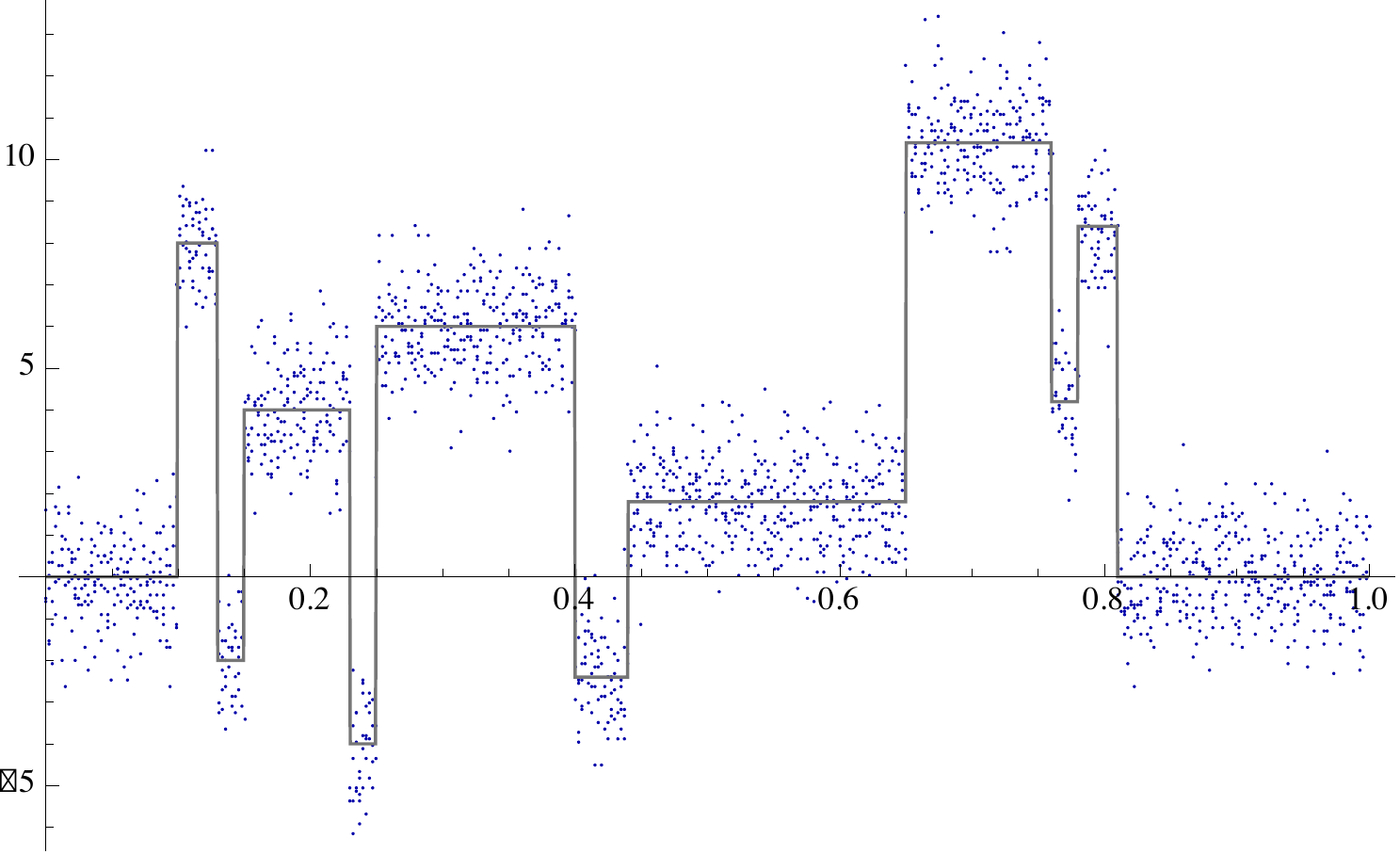}
 
 \bigskip
 
 \includegraphics[width= 0.45\columnwidth]{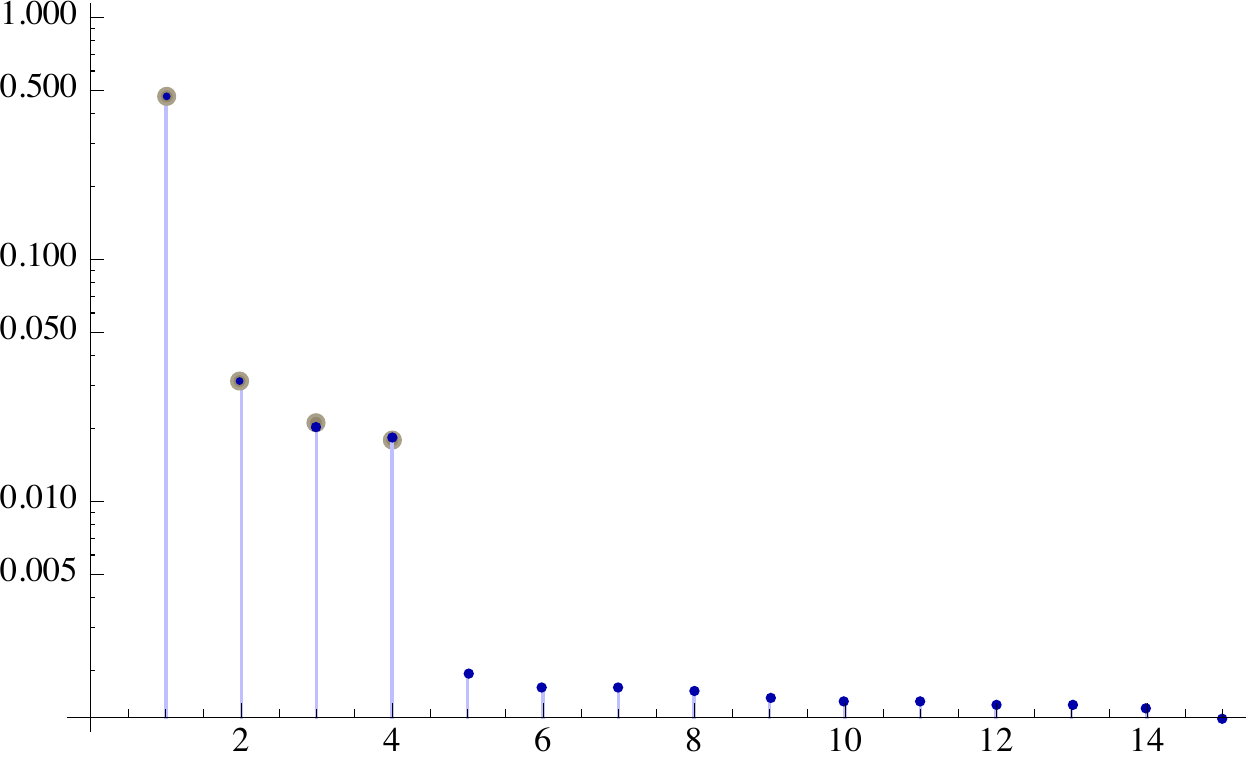}
 \quad
 \includegraphics[width= 0.45\columnwidth]{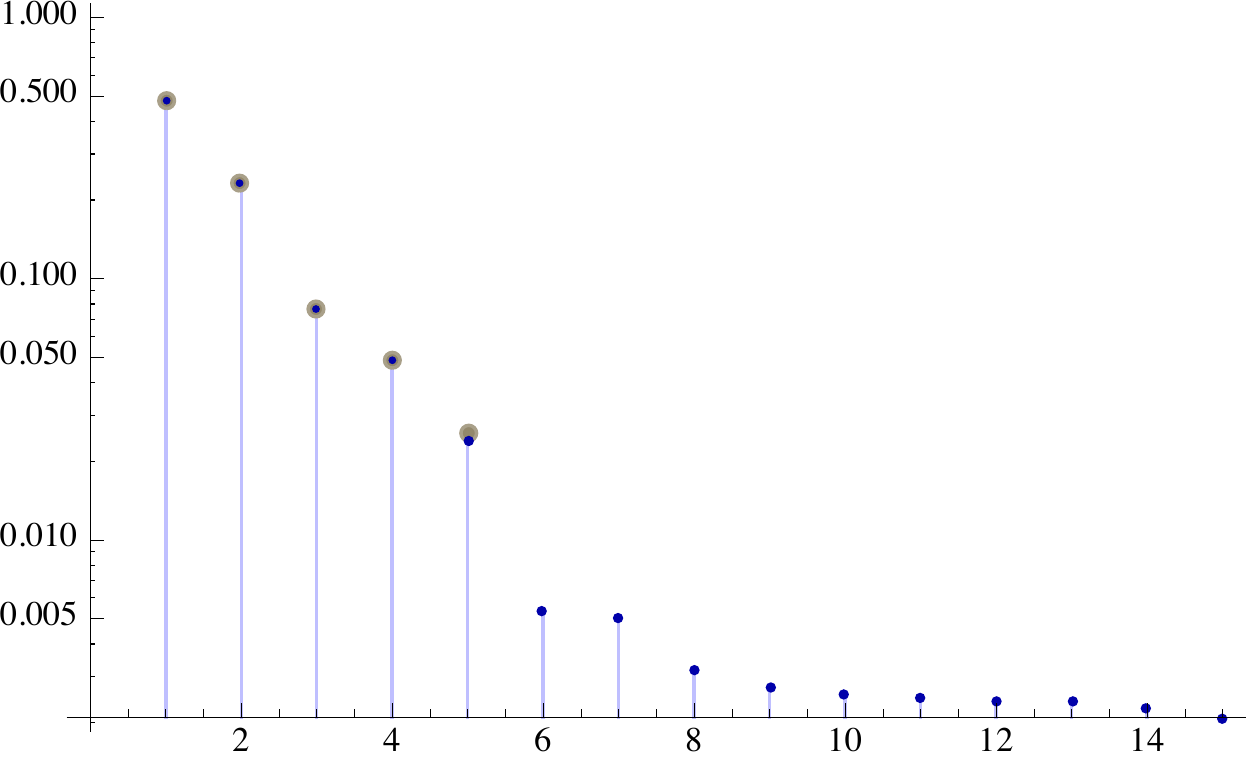}

 \caption{
 Noisy samples of signals (top) and signatures of both original signal and sample (bottom, $\log$-scale). Left: Function generated by random sampling and smoothing. Right: signal \texttt{bumps} \cite{Donoho1995Wavelet}. Sampling noise normally distributed with standard deviation $\sigma=1$. Notice that the largest signatures of signal and sample are very close (almost indistinguishable), and that there is a clear gap between the smallest signature of the signal (left: $k=4$, right: $k=5$) and the next signature of the noisy sample.
 }
 \label{stat:fig:ex}
\end{figure}
\begin{remark}[Sharpness of bound]
\Cref{stat:fig:ex} offers two examples of how the signatures of $Y^{(n)}$ deviate from those of $f^{(n)}$. Notice that in these examples, the signatures of $f^{(n)}$ are almost indistinguishable from the highest signatures of $Y^{(n)}$---indeed, their difference is \emph{less} than what is predicted by \cref{stat:thm:ineqbernst}. The reason is that, while the bound in \cref{stat:thm:ineqbernst} is sharp in general (since stability of metric signatures provides a sharp bound in general), it may be arbitrarily suboptimal for concrete examples, i.e., if $|s_k(f)-s_k(Y)|$ is small while $d_K(f,Y)$ is large.
\end{remark}

A useful application of \cref{stat:thm:ineqbernst} is that for a given probability $\alpha$, we can construct a non-asymptotic and honest (uniform) confidence region covering the signatures $s_{\!j\,}(f^{(n)})$ with probability at least $1-\alpha$, as shown in the following theorem.
\begin{theorem}\label{stat:lem:confsign}
 Fix some $\alpha\in(0,1)$ and let
  \begin{align*}
  \tau_n(\alpha):=\frac{1}{n}\left(\sqrt{\log(\alpha/2)\left(\log(\alpha/2)\kappa^2-2nv\right)}-\kappa \log(\alpha/2)\right) \ .
  \end{align*}
 Assume the regression model \eqref{intro:model} and the moment condition in \eqref{stat:ass:moments}. Then
 \begin{align*}
\inf_{f\in X} \mathbb P \left( s_{\!j\,}(f^{(n)}) \in \left[\left(s_{\!j\,}(Y^{(n)}) - \tau_n(\alpha)\right)_{+}, s_{\!j\,}(Y^{(n)}) + \tau_n(\alpha)\right] \; \text{for all} \; j\in \mathbb{N}_0\right) \geq 1- \alpha \ ,
 \end{align*}
 where $(x)_+ = \max(0,x)$.
\end{theorem}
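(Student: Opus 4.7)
The plan is to reduce the statement directly to Theorem~\ref{stat:thm:ineqbernst}, with $\tau_n(\alpha)$ chosen so that the exponential bound there equals exactly $\alpha$. First I would observe that the event
\begin{align*}
A := \Big\{ s_{\!j\,}(f^{(n)}) \in \big[\big(s_{\!j\,}(Y^{(n)}) - \tau_n(\alpha)\big)_{+},\; s_{\!j\,}(Y^{(n)}) + \tau_n(\alpha)\big] \text{ for all } j \in \mathbb{N}_0 \Big\}
\end{align*}
is implied by the (simpler) event
\begin{align*}
B := \Big\{ \max_{j\in\mathbb{N}_0} |s_{\!j\,}(Y^{(n)}) - s_{\!j\,}(f^{(n)})| \leq \tau_n(\alpha) \Big\}.
\end{align*}
The inclusion $B \subseteq A$ uses only that signatures are nonnegative: on $B$ we have $s_{\!j\,}(f^{(n)}) \geq s_{\!j\,}(Y^{(n)}) - \tau_n(\alpha)$ and $s_{\!j\,}(f^{(n)}) \geq 0$, so $s_{\!j\,}(f^{(n)}) \geq (s_{\!j\,}(Y^{(n)}) - \tau_n(\alpha))_{+}$; the upper bound is immediate. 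Hence it suffices to show $\mathbb{P}(B) \geq 1-\alpha$ uniformly in $f\in X$.

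By Theorem~\ref{stat:thm:ineqbernst} applied with $\delta = \tau_n(\alpha)$,
\begin{align*}
\mathbb{P}(B^c) \leq 2 \exp\!\left(-\frac{\tau_n(\alpha)^2\, n}{2v + 2\kappa\, \tau_n(\alpha)}\right).
\end{align*}
It remains to verify that this upper bound equals $\alpha$ for the choice of $\tau_n(\alpha)$ stated in the theorem. Setting the right-hand side equal to $\alpha$ and taking logarithms yields
\begin{align*}
\frac{\tau_n(\alpha)^2\, n}{2v + 2\kappa\, \tau_n(\alpha)} = -\log(\alpha/2),
\end{align*}
or equivalently, writing $L := \log(\alpha/2) < 0$, the quadratic
\begin{align*}
n\,\tau^2 + 2\kappa L\,\tau + 2 v L = 0.
\end{align*}
This quadratic has discriminant $4\kappa^2 L^2 - 8 n v L > 0$ (because $L<0$ and $v>0$), and its two roots are $\tau = \big(-\kappa L \pm \sqrt{\kappa^2 L^2 - 2nvL}\,\big)/n$. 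Since $-\kappa L > 0$ and $\sqrt{\kappa^2 L^2 - 2 n v L} > |\kappa L|$, only the $+$-root is positive; this is exactly
\begin{align*}
\tau_n(\alpha) = \frac{1}{n}\Big(\sqrt{L(L\kappa^2 - 2nv)} - \kappa L\Big),
\end{align*}
matching the definition in the statement. Plugging back gives $\mathbb{P}(B^c) \leq \alpha$, hence $\mathbb{P}(A) \geq \mathbb{P}(B) \geq 1-\alpha$, uniformly in $f$, as required.

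I do not anticipate any serious obstacle: the proof is a direct application of the deviation inequality together with the algebraic identification of $\tau_n(\alpha)$ as the positive root of the quadratic equation obtained by solving the Bernstein-type bound for the threshold. The only subtle step is the truncation at zero in the lower endpoint, which is justified purely by nonnegativity of the signatures and requires no probabilistic argument.
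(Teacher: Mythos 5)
Your proof is correct and follows essentially the same route as the paper: apply Theorem~\ref{stat:thm:ineqbernst} with $\delta=\tau_n(\alpha)$, note that $\tau_n(\alpha)$ is exactly the positive root making the Bernstein-type bound equal to $\alpha$, and use nonnegativity of the signatures to justify the truncation $(\cdot)_+$. The only difference is that you spell out the quadratic-root verification, which the paper leaves implicit.
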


\begin{proof}
From \cref{stat:thm:ineqbernst} we obtain
\begin{align*}
\mathbb{P} \left(|s_{\!j\,}(Y^{(n)}) -s_{\!j\,}(f^{(n)})| \leq \tau_n(\alpha) \; \text{for all} \; j\in \mathbb{N}_0 \right)
&\geq  1-2 \exp \left(- \frac{\tau_n(\alpha)^2 n}{2\nu + 2\kappa \tau_n(\alpha)} \right)\\
&= 1-\alpha \ .
\end{align*}
Since $s_{\!j\,}(f^{(n)})\geq 0$ for all $j\in\NN_0$, this completes the proof.
\end{proof}

Note that $\tau_n(\alpha)$ is a quantity that only depends on the values $n, \kappa, v$, and the confidence level~$\alpha$.
Here we assume for simplicity that $\kappa$ and $\nu$ are known---and while in practice this might not be the case, these numbers can be estimated
from the data, e.g., in the case of a normal distribution, such an estimate boils down to estimating the variance $\sigma^2$.
Fixing $\alpha$, we obtain a (random) sequence of intervals 
\begin{align*}
\left[\left(s_{\!j\,}(Y^{(n)}) - \tau_n(\alpha)\right)_{+}, s_{\!j\,}(Y^{(n)}) + \tau_n(\alpha)\right] \ ,
\end{align*}
which, according to \cref{stat:lem:confsign}, cover the sequence of true quantized signatures $s_{\!j\,}(f^{(n)})$ with confidence level $1-\alpha$.
For smaller values of $\alpha$, i.e., for larger confidence, these intervals become wider. %
Notice that for a fixed error $\alpha\in (0,1)$, the interval lengths $2\tau_n(\alpha)$ behave like $ 1/ \sqrt{n}$ as $n\rightarrow \infty$.

\Cref{stat:thm:ineqbernst} shows that $s_{\!j\,}(Y^{(n)})$ approximates $s_{\!j\,}(f^{(n)})$ well in the sup norm. However, the number of estimated signatures greater than zero might still be large. Consequently, $s(Y^{(n)})$ does not directly indicate which signatures are significantly larger than zero and hence will be of limited use for estimating the number of modes of $f$.  Nonetheless, such an estimate can readily be obtained by thresholding the empirical signatures. Define
\begin{align}\label{stat:def:est_nrmodes}
 k_{\epsilon} (Y^{(n)}) = \max\{j \in \mathbb{N}_0 : s_{j-1}(Y^{(n)}) \geq \epsilon \} \ ,
\end{align}
where, as a convention, we define $s_{-1}(Y^{(n)})=\infty$.

The threshold parameter $\tau_n(\alpha)$ has an immediate statistical interpretation:
It controls the probability of overestimating the number
of modes for any function $f\in X$.
\begin{theorem}\label{stat:thm:overest}
 Let $f\in X$,
 assume the regression model \eqref{intro:model} and the moment condition~\eqref{stat:ass:moments}, let $\alpha\in (0,1)$, and
 let $k \in \NN_0$ be such that $f^{(n)}\in X_k$. Then 
 \begin{align*}
 \mathbb P\left( k_{\tau_n(\alpha)} (Y^{(n)}) > k \right) \leq \alpha \ .
 \end{align*}
\end{theorem}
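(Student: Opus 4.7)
The plan is to reduce the claim to a direct application of \cref{stat:thm:ineqbernst} via a simple unwinding of the threshold definition.

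First I would note the key deterministic observation: since $f^{(n)} \in X_k$, we have $s_k(f^{(n)}) = d_K(f^{(n)}, X_k^{(n)}) = 0$. Hence on the event $\{|s_k(Y^{(n)}) - s_k(f^{(n)})| < \tau_n(\alpha)\}$, we get $s_k(Y^{(n)}) < \tau_n(\alpha)$.

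Next I would unwind the definition of $k_\epsilon$ in \eqref{stat:def:est_nrmodes}. The event $\{k_{\tau_n(\alpha)}(Y^{(n)}) > k\}$ is exactly $\{k_{\tau_n(\alpha)}(Y^{(n)}) \geq k+1\}$, which by definition of the max means there exists some $j \geq k+1$ with $s_{j-1}(Y^{(n)}) \geq \tau_n(\alpha)$. Since the sequence $(s_j(Y^{(n)}))$ is non-increasing in $j$ (nested models), this forces $s_k(Y^{(n)}) \geq \tau_n(\alpha)$. Combining with the previous paragraph, the event $\{k_{\tau_n(\alpha)}(Y^{(n)}) > k\}$ is contained in $\{|s_k(Y^{(n)}) - s_k(f^{(n)})| \geq \tau_n(\alpha)\}$.

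Finally I would apply \cref{stat:thm:ineqbernst} with $\delta = \tau_n(\alpha)$ to bound the latter event by $2\exp(-\tau_n(\alpha)^2 n / (2v + 2\kappa \tau_n(\alpha)))$, and then verify that by the explicit choice of $\tau_n(\alpha)$ in \cref{stat:lem:confsign} this quantity equals exactly $\alpha$ (indeed, $\tau_n(\alpha)$ is defined as the positive root of the quadratic $\delta^2 n = \log(2/\alpha)(2v + 2\kappa\delta)$). This chain of inclusions and the deviation bound together yield the claim.

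There is no real obstacle here: the proof is essentially a one-line consequence of the stability-type bound in \cref{stat:thm:ineqbernst} combined with the tautology that a function with at most $k$ modes has $k$th signature equal to zero. The only minor subtlety worth writing out carefully is the equivalence between $k_{\tau_n(\alpha)}(Y^{(n)}) > k$ and $s_k(Y^{(n)}) \geq \tau_n(\alpha)$, which relies on the monotonicity of the signature sequence.
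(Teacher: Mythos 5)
Your proposal is correct and follows essentially the same route as the paper: reduce the event $\{k_{\tau_n(\alpha)}(Y^{(n)}) > k\}$ to $\{s_k(Y^{(n)}) \geq \tau_n(\alpha)\}$, use $s_k(f^{(n)}) = 0$, and apply \cref{stat:thm:ineqbernst} with the defining property of $\tau_n(\alpha)$. The only difference is that you spell out the monotonicity step behind the equivalence $k_{\tau_n(\alpha)}(Y^{(n)}) > k \iff s_k(Y^{(n)}) \geq \tau_n(\alpha)$, which the paper states without comment.
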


\begin{proof}
First, observe from the definition of $ k_{\tau_n(\alpha)}(Y^{(n)})$ in \eqref{stat:def:est_nrmodes} that
 \begin{align*}
\mathbb P\left(  k_{\tau_n(\alpha)}(Y^{(n)}) > k  \right) = \mathbb  P\left( s_{k}(Y^{(n)}) \geq \tau_n(\alpha)\right) \ .
\end{align*}
Notice that $f^{(n)}\in X_k$ implies that $s_k(f^{(n)})=0$. Therefore, for $f^{(n)}\in X_k$, \cref{stat:thm:ineqbernst} and the definition of $\tau_n(\alpha)$ imply (similar to the proof of \cref{stat:lem:confsign}) that 
 \begin{align*}
 \mathbb  P\left( s_{k}(Y^{(n)})  \geq \tau_n(\alpha)\right) \leq \alpha \ .
\end{align*}
\end{proof}

Hence, whatever the number of modes of $f^{(n)}$ might be, the thresholding index $ k_{\tau_n(\alpha)}(Y^{(n)})$ overestimates this number with probability less or equal to $\alpha$. Notice that the thresholding parameter $\tau_n(\alpha)$ is independent of the number and magnitude of the modes of $f$, so in that sense, this result is universal.

As mentioned in the introduction, obtaining a universal result in the other direction, i.e., controlling the probability of \emph{underestimating} the number of modes, is a more delicate task since modes can become arbitrarily small. Recalling the definition of $k_{\epsilon} (f)$ as in \eqref{stat:def:est_nrmodes}, we find:

\begin{theorem}\label{stat:thm:underest}
 Let $f\in X$,
 assume the regression model \eqref{intro:model} and the moment condition~\eqref{stat:ass:moments}, let $\alpha\in (0,1)$, and
 let $k \in \NN_0$ be such that $f^{(n)}\in X_k$. Then 
\begin{align}\label{stat:eq:overest}
 \mathbb P\left(  k_{\tau_n(\alpha)} (Y^{(n)}) < k_{2 \tau_n(\alpha)}(f^{(n)}) \right) \leq \alpha \ .
 \end{align}
\end{theorem}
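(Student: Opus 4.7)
The plan is to reduce everything to the uniform deviation bound in Theorem~\ref{stat:thm:ineqbernst} (with $\delta = \tau_n(\alpha)$, which by definition of $\tau_n(\alpha)$ gives probability exactly $1-\alpha$), and then trace through the two thresholding definitions carefully.

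First I would set $m := k_{2\tau_n(\alpha)}(f^{(n)})$ and dispose of the trivial case $m = 0$, in which the event $\{k_{\tau_n(\alpha)}(Y^{(n)}) < m\}$ is empty (the indices $k_\epsilon$ are non-negative). So assume $m \geq 1$. By the definition in~\eqref{stat:def:est_nrmodes} applied to $f^{(n)}$, we have
\begin{equation*}
s_{m-1}(f^{(n)}) \geq 2\tau_n(\alpha).
\end{equation*}

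Second, I would invoke Theorem~\ref{stat:thm:ineqbernst} with $\delta = \tau_n(\alpha)$. The choice of $\tau_n(\alpha)$ in Theorem~\ref{stat:lem:confsign} is precisely designed so that
\begin{equation*}
2\exp\!\left(-\frac{\tau_n(\alpha)^2 n}{2v + 2\kappa\,\tau_n(\alpha)}\right) = \alpha,
\end{equation*}
so the uniform event
\begin{equation*}
\Omega := \left\{\max_{j\in\NN_0} \bigl|s_{\!j\,}(Y^{(n)}) - s_{\!j\,}(f^{(n)})\bigr| \leq \tau_n(\alpha)\right\}
\end{equation*}
has probability at least $1-\alpha$.

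Third, I would show that on $\Omega$ we have $k_{\tau_n(\alpha)}(Y^{(n)}) \geq m$. Indeed, on $\Omega$,
\begin{equation*}
s_{m-1}(Y^{(n)}) \;\geq\; s_{m-1}(f^{(n)}) - \tau_n(\alpha) \;\geq\; 2\tau_n(\alpha) - \tau_n(\alpha) \;=\; \tau_n(\alpha),
\end{equation*}
so by the definition~\eqref{stat:def:est_nrmodes} of $k_{\tau_n(\alpha)}(Y^{(n)})$ as the maximum index $j$ with $s_{j-1}(Y^{(n)}) \geq \tau_n(\alpha)$, we conclude $k_{\tau_n(\alpha)}(Y^{(n)}) \geq m = k_{2\tau_n(\alpha)}(f^{(n)})$. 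Therefore the complementary event $\{k_{\tau_n(\alpha)}(Y^{(n)}) < k_{2\tau_n(\alpha)}(f^{(n)})\}$ is contained in $\Omega^c$, which has probability at most $\alpha$.

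There is no real obstacle here beyond being careful with the off-by-one in the definition of $k_\epsilon$ and with the trivial case $m=0$; the hypothesis $f^{(n)} \in X_k$ in the theorem statement is actually not used in the argument (since $m$ is defined purely via the signatures of $f^{(n)}$), but it is consistent with the setup and implicitly guarantees that $m \leq k$, giving the two-sided bound announced in the introduction when combined with Theorem~\ref{stat:thm:overest}.
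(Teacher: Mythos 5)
Your proof is correct and follows essentially the same route as the paper's: both reduce the event $\{k_{\tau_n(\alpha)}(Y^{(n)}) < m\}$ to the single deviation $|s_{m-1}(Y^{(n)}) - s_{m-1}(f^{(n)})| > \tau_n(\alpha)$ (the paper via a direct chain of probability inequalities, you via the uniform event $\Omega$), handle $m=0$ trivially, and invoke \cref{stat:thm:ineqbernst} with $\delta = \tau_n(\alpha)$. Your side remark that the hypothesis $f^{(n)} \in X_k$ is not actually needed for this bound is also accurate.
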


\begin{proof}
Let $f^{(n)} \in X_k$ and let $l$ denote the largest integer such that $s_{l-1}(f^{(n)}) \geq 2 \tau_n(\alpha)$, i.e., $l= k_{2 \tau_n(\alpha)}(f^{(n)})$. If $l=0$, then \eqref{stat:eq:overest} is trivially satisfied, since $ k_{\tau_n(\alpha)} (Y^{(n)}) \geq 0$. So suppose that $l>0$. Then 
\begin{align*}
  \mathbb P\left(  k_{\tau_n(\alpha)}(Y^{(n)}) < l \right) &= \mathbb{P}\left( s_{l-1}(Y^{(n)}) < \tau_n(\alpha) \right) \\ 
  &\leq  \mathbb{P}\left( s_{l-1}(f^{(n)})-s_{l-1}(Y^{(n)}) >\tau_n(\alpha) \right)\\
  &\leq  \mathbb{P}\left( |s_{l-1}(f^{(n)})-s_{l-1}(Y^{(n)})| >\tau_n(\alpha) \right)\\
  &\leq \alpha \ ,
\end{align*}
where the last inequality follows from \cref{stat:thm:ineqbernst} and the definition of $\tau_n(\alpha)$.
\end{proof}
We have thus expressed the underestimation error of the number of modes as an explicit function of the signature threshold $2\tau_n(\alpha)$.
Combining the latter results, we obtain \emph{two sided bounds} for the estimated number of modes.
More precisely, for any $f$ and $k$ with $f^{(n)}\in X_k$ and any $\alpha \in (0,1)$ we have that 
\begin{align*}
  \mathbb P\left( k_{2 \tau_n(\alpha/2)}(f^{(n)}) \leq  k_{\tau_n(\alpha/2)} (Y^{(n)}) \leq k  \right) \geq 1-\alpha \ .
\end{align*}
As mentioned above, for fixed $\alpha, \kappa, v$ one has $\tau_n(\alpha)\approx 1/\sqrt n$. Therefore there exists a constant $C$ such that asymptotically (for large enough $n$) by thresholding at $C/\sqrt n$, it can be guaranteed at a level $\alpha$ that all signatures above this threshold are detected.

Based on the previous results we now construct confidence intervals for $k_\epsilon(f^{(n)})$, i.e., for the number of modes whose signatures exceed a certain size $\epsilon$.
\begin{corollary}\label{stat:corr:confint}
Assume the regression model \eqref{intro:model}, the moment condition~\eqref{stat:ass:moments}, let $\epsilon \geq 0$, and let $f^{(n)}\in X_k$.
Define
\begin{align*}
   l(\alpha,\epsilon)=\begin{cases}
                       \max \left\{j\in \NN_0 : s_{\!j\,}(Y^{(n)}) > \epsilon + \tau_n(\alpha)\right\} \; &\text{if} \; \epsilon < s_0(Y^{(n)})-\tau_n(\alpha) \\
                       0 \; & \text{otherwise}
                      \end{cases}
\end{align*}
 and 
 \begin{align*}
  u(\alpha,\epsilon) = \begin{cases}
                       \min \left\{j\in \NN_0 : s_{\!j\,}(Y^{(n)}) < \epsilon - \tau_n(\alpha)\right\} \; &\text{if} \; \epsilon > \tau_n(\alpha) \\
                       \infty \; & \text{otherwise}.
                      \end{cases}
\end{align*}
Then
\begin{align*}
  \mathbb P\left( k_\epsilon(f^{(n)}) \in \left[  l(\alpha,\epsilon) ,  u(\alpha,\epsilon) \right] \right) \geq 1- \alpha \ .
 \end{align*}
\end{corollary}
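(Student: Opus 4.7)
The plan is to condition on the high-probability event
\[
E := \bigl\{\,|s_{\!j\,}(Y^{(n)}) - s_{\!j\,}(f^{(n)})| \leq \tau_n(\alpha) \text{ for all } j \in \NN_0\,\bigr\},
\]
which by \cref{stat:thm:ineqbernst} together with the algebraic definition of $\tau_n(\alpha)$ (exactly as used in the proof of \cref{stat:lem:confsign}) satisfies $\mathbb{P}(E) \geq 1-\alpha$. It then suffices to show that on $E$ one has both $l(\alpha,\epsilon) \leq k_\epsilon(f^{(n)})$ and $k_\epsilon(f^{(n)}) \leq u(\alpha,\epsilon)$, since the two inclusions combined give the desired event. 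This reduces the entire argument to deterministic manipulations on the deterministic sequences $(s_{\!j\,}(Y^{(n)}))$ and $(s_{\!j\,}(f^{(n)}))$, using nothing more than the two core structural facts: (i) $|s_{\!j\,}(Y^{(n)}) - s_{\!j\,}(f^{(n)})| \leq \tau_n(\alpha)$ for every $j$ on $E$, and (ii) both sequences are monotonically non-increasing.

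For the lower bound, I would split on the two cases of the definition of $l(\alpha,\epsilon)$. If $\epsilon \geq s_0(Y^{(n)}) - \tau_n(\alpha)$, then $l(\alpha,\epsilon)=0$ and $k_\epsilon(f^{(n)}) \geq 0$ trivially. Otherwise, put $l := l(\alpha,\epsilon)$, so $s_l(Y^{(n)}) > \epsilon + \tau_n(\alpha)$. On $E$ this gives $s_l(f^{(n)}) > \epsilon$, i.e., setting $j = l+1$, $s_{j-1}(f^{(n)}) > \epsilon$, which by the definition \eqref{stat:def:est_nrmodes} of $k_\epsilon(f^{(n)})$ yields $k_\epsilon(f^{(n)}) \geq l+1 \geq l$.

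For the upper bound, I likewise split on the definition of $u(\alpha,\epsilon)$. If $\epsilon \leq \tau_n(\alpha)$, then $u(\alpha,\epsilon)=\infty$ and the inequality is trivial. Otherwise, put $u := u(\alpha,\epsilon)$, so $s_u(Y^{(n)}) < \epsilon - \tau_n(\alpha)$. On $E$ this yields $s_u(f^{(n)}) < \epsilon$, and monotonicity of the sequence $(s_{\!j\,}(f^{(n)}))$ then propagates this strict inequality: $s_{j-1}(f^{(n)}) \leq s_u(f^{(n)}) < \epsilon$ for every $j \geq u+1$, so no such $j$ lies in the set defining $k_\epsilon(f^{(n)})$; hence $k_\epsilon(f^{(n)}) \leq u$.

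The argument is essentially bookkeeping, and there is no real obstacle — the one place to be careful is to correctly handle the two conventional boundary cases in the piecewise definitions of $l$ and $u$ (in particular the convention $s_{-1} = \infty$ driving the behavior of $k_\epsilon$ at $\epsilon = 0$), and to keep the off-by-one index shift between $s_j$ and $s_{j-1}$ consistent when translating between the two defining inequalities. Once the event $E$ is fixed, both inclusions follow in a single line each, and a union over the two trivial/nontrivial cases finishes the proof.
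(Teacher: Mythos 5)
Your proposal is correct and follows essentially the same route as the paper: condition on the high-probability event that all empirical signatures lie within $\tau_n(\alpha)$ of the quantized ones (the paper phrases this as $d_K(Y^{(n)},f^{(n)})\leq\tau_n(\alpha)$ plus stability, which is equivalent for this purpose), then deduce the two deterministic inclusions $l(\alpha,\epsilon)\leq k_\epsilon(f^{(n)})\leq u(\alpha,\epsilon)$ from the definitions and monotonicity of the signature sequences. The index bookkeeping in both bounds matches the paper's argument, so there is nothing to add.
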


\begin{proof}
Suppose, for the moment, that
\begin{align}\label{proof:cor:confint:ass1}
 d_K(Y^{(n)}, f^{(n)}) \leq \tau_n(\alpha) \ .
\end{align}
Since $s_{\!j\,}(f^{(n)}) \geq \epsilon$ for all $j < k_\epsilon(f^{(n)})$, stability of metric signatures implies that $s_{\!j\,}(Y^{(n)}) \geq \epsilon -\tau_n(\alpha)$ for all $j < k_\epsilon(f^{(n)})$. Hence, by the definition of $ u(\alpha,\epsilon)$, we have $ u (\alpha,\epsilon) \geq k_\epsilon(f^{(n)})$.

Further, while still assuming \eqref{proof:cor:confint:ass1}, $\smash{s_{\!j\,}(Y^{(n)}) > \epsilon + \tau_n(\alpha)}$ implies that $\smash{s_{\!j\,}(f^{(n)})>\epsilon}$. 
Hence, by the definition of $ l(\alpha,\epsilon)$, we find that $\smash{s_{ l (\alpha,\epsilon)}(f^{(n)})>\epsilon}$.
This in turn implies $\smash{k_\epsilon(f^{(n)}) \geq  l (\alpha, \epsilon).}$
Therefore, we have so far shown that \eqref{proof:cor:confint:ass1} implies that
\begin{align*}
  l (\alpha,\epsilon) \leq k_\epsilon (f^{(n)}) \leq  u(\alpha, \epsilon) \ .
\end{align*}
Since \eqref{proof:cor:confint:ass1} holds with probability $\geq1-\alpha$ (see proof of \cref{stat:thm:ineqbernst}), this proves the assertion.
\end{proof}

Note that the upper bound for $k_\epsilon$ jumps to $\infty$ if $\epsilon \leq \tau_n(\alpha)$. This reflects the fact,
that meaningful upper bounds cannot be provided for signatures whose size is of the order of the noise level.

\begin{remark}[Distribution of signatures]\label{distrsign}
Assume the setting of \cref{stat:thm:ineqbernst} and suppose that $X_k$ is scaling invariant for all $k \in \mathbb{N}_0$, i.e., $\left\{ \lambda g : g \in X_k \right\} = X_k$ for all $0<\lambda \in \mathbb{R}$. %
Assume for simplicity that $f \equiv 0$, the general case still being unknown.
Then, for any $k \in \mathbb{N}_0$, we have that 
\begin{align*}
 \sqrt{n} \left(s_k(Y^{(n)})-s_k(f^{(n)})\right) & = \sqrt{n} \inf_{g \in X_k} \sup_{s\in [0,1]} \left| \int_0^s \left( \epsilon^{(n)}(t) - g(t) \right) dt \right| \\
 & =\inf_{g \in X_k} \sup_{s\in [0,1]} \left| \frac{1}{\sqrt n} \sum_{i=1}^{\lceil ns \rceil} \epsilon_i - \int_0^s  \sqrt{n}  g(t) dt \right| \\
 & =\inf_{g \in X_k} \sup_{s\in [0,1]} \left| \frac{1}{\sqrt n} \sum_{i=1}^{\lceil ns \rceil} \epsilon_i - \int_0^s g(t) dt \right| \ ,
\end{align*}
where the last equality follows from the scaling invariance of $X_k$. Noting that $$ \frac{1}{\sqrt{n}}  \max_{m =1, \dots, n}\left |\sum_{i=1}^m \epsilon_i \right| \overset{\mathcal D}{\rightarrow} \sup_{0\leq x\leq 1}B(x) \  ,$$ where $B$ denotes a standard Brownian motion on $[0,1]$ and using that $f\equiv 0$, it follows that 
\begin{align*}
 \sqrt{n} \left (s(Y^{(n)})\right ) \overset{\mathcal D}{\rightarrow} s(B') \ ,
\end{align*}
where $B'$ denotes the derivative of a standard Brownian Motion on $[0,1]$ in a weak sense. This follows from the continuity of the functional $s$ w.r.t.~the  Kolmogorov norm.
\end{remark}

\begin{remark}[Gaussian observation] \label{stat:rem:gauss}
If the noise $\epsilon$ in \eqref{intro:model} is Gaussian with mean zero and variance~$\sigma^2$, then \cref{stat:thm:ineqbernst} can be sharpened, due to a refined large deviation result for Gaussian observations (see, e.g.,~\cite{Billingsley99}):
\begin{align}
 \mathbb P \left( d_K(f^{(n)},Y^{(n)}) \geq \delta \right) \leq 2 \exp\left( -  \frac{\delta^2 n}{2 \sigma ^2}\right) \ .
\end{align}
Hence, in the Gaussian case, all results of \cref{sec:stat} remain true if $\tau_n(\alpha)$ is replaced by the simpler (and slightly sharper) threshold
\begin{align*}
 \tilde \tau_n(\alpha)= \sqrt{- 2 \sigma^2 / n \log (\alpha/2)} \ .
\end{align*}
\end{remark}

\paragraph*{Obtaining the correct number of modes using a priori information}
Notice that so far we have not made any a priori assumption about $f^{(n)}$. If, however, $f^{(n)}\in X_k$, and \emph{if} we impose prior information on the smallest strictly positive signature $s_{k-1}(f^{(n)})$, then we obtain an explicit bound for the probability that the number of modes is estimated correctly.

\begin{theorem}\label{stat:cor:muissspec}
Assume the regression model \eqref{intro:model} and the moment condition~\eqref{stat:ass:moments}. Let $f^{(n)} \in X_k$ be such that $s_{k-1}(f^{(n)}) \geq \epsilon$. Then
\begin{align}\label{stat:eq:underest}
  \mathbb P\left(  k_{\epsilon/2} (Y^{(n)}) = k \right) \geq 1 - 2 \exp \left(- \frac{\epsilon^2 n}{8 v + 4 \kappa \epsilon} \right) \ .
\end{align}
\end{theorem}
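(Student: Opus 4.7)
The plan is to invoke the uniform deviation bound of \cref{stat:thm:ineqbernst} with $\delta = \epsilon/2$ and show that on the resulting high-probability event, the thresholding index $k_{\epsilon/2}(Y^{(n)})$ must equal $k$. The key observation is that the assumption $f^{(n)} \in X_k$ forces $s_k(f^{(n)}) = 0$, while the hypothesis $s_{k-1}(f^{(n)}) \geq \epsilon$ gives a definite gap that the empirical signatures can straddle only if the uniform deviation exceeds $\epsilon/2$.

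More concretely, I would first apply \cref{stat:thm:ineqbernst} with $\delta = \epsilon/2$ to obtain
\begin{align*}
\mathbb{P}\bigl(\max_{j\in \NN_0}|s_{\!j\,}(Y^{(n)}) - s_{\!j\,}(f^{(n)})| \geq \epsilon/2\bigr) \leq 2 \exp\!\left(-\frac{(\epsilon/2)^2 n}{2v + 2\kappa(\epsilon/2)}\right) = 2\exp\!\left(-\frac{\epsilon^2 n}{8v + 4\kappa\epsilon}\right).
\end{align*}
On the complementary event $E$, of probability at least $1 - 2\exp(-\epsilon^2 n/(8v+4\kappa\epsilon))$, one has $|s_{\!j\,}(Y^{(n)}) - s_{\!j\,}(f^{(n)})| < \epsilon/2$ simultaneously for all $j \in \NN_0$.

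On $E$, two inequalities are then immediate. Since $s_{k-1}(f^{(n)}) \geq \epsilon$, we get $s_{k-1}(Y^{(n)}) > \epsilon - \epsilon/2 = \epsilon/2$, so that $k \in \{j : s_{j-1}(Y^{(n)}) \geq \epsilon/2\}$, showing $k_{\epsilon/2}(Y^{(n)}) \geq k$. On the other hand, $f^{(n)} \in X_k$ gives $s_k(f^{(n)}) = 0$, hence $s_k(Y^{(n)}) < \epsilon/2$, so $k+1 \notin \{j : s_{j-1}(Y^{(n)}) \geq \epsilon/2\}$ and thus $k_{\epsilon/2}(Y^{(n)}) \leq k$. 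Combining gives $k_{\epsilon/2}(Y^{(n)}) = k$ on $E$, which yields the claimed bound.

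There is no real obstacle here — the argument is essentially a direct combination of the stability-based deviation bound with the monotonicity of the signature sequence and the gap hypothesis. The only minor subtlety is ensuring one uses a strict inequality in the deviation event so that $s_k(Y^{(n)}) < \epsilon/2$ (not merely $\leq$), which is what forces $k_{\epsilon/2}(Y^{(n)})$ to equal, rather than merely be at least, $k$. This is handled by working with the open event $\{\max_j|s_{\!j\,}(Y^{(n)}) - s_{\!j\,}(f^{(n)})| < \epsilon/2\}$, whose complement is still controlled by \cref{stat:thm:ineqbernst}.
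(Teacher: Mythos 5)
Your proof is correct and follows essentially the same route as the paper: both apply \cref{stat:thm:ineqbernst} with $\delta=\epsilon/2$ and exploit the gap between $s_{k-1}(f^{(n)})\geq\epsilon$ and $s_k(f^{(n)})=0$, the only cosmetic difference being that the paper argues by contraposition (showing $k_{\epsilon/2}(Y^{(n)})\neq k$ forces a deviation of at least $\epsilon/2$ at index $k-1$ or $k$) while you work directly on the complementary high-probability event. Your remark about keeping the strict inequality $s_k(Y^{(n)})<\epsilon/2$ is exactly the small point the paper handles implicitly.
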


\begin{proof}
First suppose that $k>0$. Notice that by~\eqref{stat:def:est_nrmodes} we have that $ k_{\epsilon/2} (Y) = k$ iff $s_{k-1} (Y^{(n)}) \geq \frac \epsilon 2$ and  $s_{k} (Y^{(n)}) < \frac \epsilon 2$. Furthermore, by assumption we have that $s_{k-1}(f^{(n)}) \geq \epsilon$ and $s_{k}(f^{(n)}) =0$. Therefore,  $ k_{\epsilon/2} (Y) \neq k$ implies that 
\begin{align*}
|s_{k-1}(f^{(n)}) -s_{k-1}(Y^{(n)})|\geq\frac \epsilon 2 \quad\text{or} \quad |s_{k}(f^{(n)}) -s_{k}(Y^{(n)})|\geq\frac \epsilon 2 \ .
\end{align*}
For $k=0$, by a similar argument, we have that $ k_{\epsilon/2} (Y^{(n)}) \neq 0$ implies that $|s_{k}(f^{(n)}) -s_{k}(Y^{(n)})|\geq\frac \epsilon 2$.

Thus, for all $k\geq 0$, \cref{stat:thm:ineqbernst} implies that
\begin{align*}
 \mathbb P \left(  k_{\epsilon/2}(Y^{(n)}) \neq k \right) \leq  \mathbb P \left( \max_{j\in \NN_0} |s_{\!j\,}(Y^{(n)})- s_{\!j\,}(f^{(n)})| \geq \frac\epsilon 2 \right) \leq
 2 \exp \left(- \frac{\epsilon^2 n}{8 v + 4 \kappa \epsilon} \right) \ .
\end{align*}
\end{proof}

We stress that the bound in \cref{stat:cor:muissspec} is remarkably simple, as it depends on the signatures $s_{\!j\,}(f^{(n)})$, $j\in \{0,\ldots,k-1\}$, only through $s_{k-1}(f^{(n)})$, which in a sense represents the signature that is hardest to detect. Notice furthermore that the bound in \cref{stat:cor:muissspec} does not depend on the (unknown) number of signatures $k$.

\paragraph*{Limitations of Kolmogorov signatures}
Kolmogorov signatures are by no means suitable for all kinds of signals. Indeed, as might be expected intuitively, Kolmogorov signatures are not well suited for sparse signals that have high peaks with small support (the \emph{needle in a haystack problem}). In order to illustrate this effect, consider signals of the following kind:
\begin{align}\label{stat:subopt_Kolmogorov}
f_n(x)= \begin{cases}
 (1+\varepsilon)\sqrt{2\log n} \; \text{if} \; x\in[j/,(j+1)/n)\ ,\\
 0 \; \text{otherwise}\ ,
 \end{cases}
\end{align}
for some $\varepsilon>0$ and for some $j\in \{0,\dots, n-1\}$ \emph{that is a priori not known}. Note that there exists \emph{no} statistical testing procedure that can asymptotically (as the number of observation $n \to \infty$) detect signals with intensity as in~\eqref{stat:subopt_Kolmogorov} for $\varepsilon <0$ with positive detection power, see, e.g.,~\cite{donoho2004}. For $\varepsilon>0$, sup norm based thresholding is known to achieve the optimal detection boundary~\cite{donoho2004}. In contrast, Kolmogorov signature based thresholding at $\tau_n(\alpha)$ as described above is not able to detect signals of the type~\eqref{stat:subopt_Kolmogorov} for any $\varepsilon >0$:  
\begin{theorem}[Kolmogorov signatures and sparse signals]\label{thm:Kolmogorov-suboptimal}
Let $f_n:[0,1]\to \RR$ be as in \eqref{stat:subopt_Kolmogorov}, and let $Y_i= f_n(\frac{i}{n})+\epsilon_i$, where $\epsilon_1,\ldots,\epsilon_{n}\overset{\text{i.i.d.}}{\sim}\mathcal{N}(0,1)$. Then for any $\alpha \in (0,1)$ one has
\begin{align*}
\lim_{n \rightarrow \infty} \mathbb{P}\left(  k_{\tau_n(\alpha)} (Y^{(n)}) = 1 \right) = 0\ ,
\end{align*}
i.e., it is impossible to detect the single mode of $f_n$ when thresholding Kolmogrov signatures at $\tau_n(\alpha)$.
\end{theorem}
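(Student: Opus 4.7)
The plan is to show that the signal $f_n$ is asymptotically invisible in the Kolmogorov metric, so that the empirical signatures of $Y^{(n)}$ are asymptotically distributed like those of pure noise; in that pure-noise regime, the thresholding procedure almost surely fails to return \emph{exactly} one mode.

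First I would compute $d_K(f_n,0)$ directly from the definition. Since $f_n$ has support of Lebesgue measure $1/n$ and height $(1+\varepsilon)\sqrt{2\log n}$, its antiderivative $F_n$ attains maximum $(1+\varepsilon)\sqrt{2\log n}/n$, so
\[
d_K(f_n,0)=(1+\varepsilon)\frac{\sqrt{2\log n}}{n}.
\]
From the explicit form of the threshold one reads off $\tau_n(\alpha)\sim\sqrt{2v\log(2/\alpha)/n}$, hence $d_K(f_n,0)/\tau_n(\alpha)=O(\sqrt{(\log n)/n})\to 0$. Applying stability of metric signatures (\cref{intro:lem:stab}) to $\epsilon^{(n)}:=Y^{(n)}-f_n^{(n)}$ then gives $|s_j(Y^{(n)})-s_j(\epsilon^{(n)})|\le d_K(f_n,0)$ for every $j$. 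Consequently, with $\eta_n:=d_K(f_n,0)=o(\tau_n(\alpha))$,
\[
\{k_{\tau_n(\alpha)}(Y^{(n)})=1\}\subseteq\{s_0(\epsilon^{(n)})\ge \tau_n(\alpha)-\eta_n\}\cap\{s_1(\epsilon^{(n)})<\tau_n(\alpha)+\eta_n\},
\]
so it suffices to control the right-hand event under pure noise.

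Third, I would analyze this pure-noise event asymptotically via Donsker's invariance principle: under the Gaussian assumption, $\sqrt n\,F_{\epsilon^{(n)}}(\cdot)\Rightarrow B(\cdot)$ weakly in $C([0,1])$ where $B$ is standard Brownian motion. Since each $s_j$ is a $1$-Lipschitz functional of the antiderivative in the sup norm, the continuous mapping theorem yields joint weak convergence $(\sqrt n\, s_0,\sqrt n\, s_1)(\epsilon^{(n)})\Rightarrow(S_0,S_1)$, where $S_j=\inf\{\|B-G\|_\infty:G(0)=0,\,G'\in X_j\}$. Combined with $\sqrt n\,\tau_n(\alpha)\to c(\alpha):=\sqrt{2v\log(2/\alpha)}$, the claim reduces by the portmanteau theorem to showing
\[
\mathbb P\bigl(S_0\ge c(\alpha),\ S_1<c(\alpha)\bigr)=0.
\]

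The main obstacle lies in this final step, i.e., the extremal identification $S_0=S_1$ almost surely on $\{S_0\ge c(\alpha)\}$. The natural route is a first-order optimality argument: on $\{S_0\ge c(\alpha)>0\}$ the optimal linear Kolmogorov approximant $\ell^*(s)=c^*s$ of $B$ has a residual $B-\ell^*$ which must attain both extremes $+S_0$ and $-S_0$ (otherwise a perturbation of $c^*$ would strictly decrease the sup), and for Brownian paths this happens on a random set that is topologically rich enough that any single one-mode bump can lower one extremum only at the cost of raising the other by the same amount. Making this precise---so that inserting a function whose derivative has at most one mode cannot reduce $\|B-G\|_\infty$ below $S_0$---is the technical heart of the proof; chained together with the reduction of Steps~1--2, it gives $\lim_n\mathbb P(k_{\tau_n(\alpha)}(Y^{(n)})=1)=0$.
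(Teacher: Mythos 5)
Your reduction through Step~2 is sound, and in one respect it is more careful than the paper's own argument. The paper simply bounds $\mathbb P(k_{\tau_n(\alpha)}(Y^{(n)})=1)\le\mathbb P(s_0(Y^{(n)})\ge\tau_n(\alpha))\le\mathbb P(d_K(Y^{(n)},0)\ge\tau_n(\alpha))$ and concludes by asserting that $\sqrt n\,\tau_n(\alpha)\to\infty$ while $\tfrac{1}{\sqrt n}\max_m\left|\sum_{i\le m}\epsilon_i\right|$ converges in distribution and $\mu_n/\sqrt n\to0$. You instead compute $\sqrt n\,\tau_n(\alpha)\to\sqrt{2v\log(2/\alpha)}<\infty$, which is what the explicit formula for $\tau_n(\alpha)$ actually gives (and is consistent with the paper's own remark that $\tau_n(\alpha)\approx 1/\sqrt n$). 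This is precisely why you are forced into the two-sided analysis of the pure-noise event $\{S_0\ge c,\ S_1<c\}$ rather than the paper's one-line upper bound: with a threshold of exact order $1/\sqrt n$, the event $\{s_0(\epsilon^{(n)})\ge\tau_n(\alpha)\}$ does \emph{not} have vanishing probability. So the two arguments genuinely diverge at the last step, and you have put your finger on the point where the paper's shortcut does not close.

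The gap in your proposal is the final identification $\mathbb P(S_0\ge c,\ S_1<c)=0$, which you leave as a heuristic and which I do not believe can be made rigorous, because the statement is false. By the taut-string characterization of Kolmogorov signatures (\cref{thm:tautStringMinModes} together with \cref{lemma:positiveInflection} and the merge-value analysis), $s_k$ equals the value of $\alpha$ at which the taut-string derivative first has at most $k$ modes, and generically---in particular, almost surely for Gaussian noise and in the Brownian limit---the merge values are distinct and the mode count drops by one at a time. Hence $S_1<S_0$ almost surely, the random interval $[S_1,S_0)$ is a.s.\ nondegenerate, and the constant $c=\sqrt{2v\log(2/\alpha)}$ lands in it with positive probability. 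Your first-order optimality picture---that a one-mode bump can lower one extremum of the residual only by raising the other---is exactly what fails: granting the approximant one additional mode is precisely what allows the taut string to fit into a strictly smaller tube. As written, your argument therefore yields only $\limsup_n\mathbb P(k_{\tau_n(\alpha)}(Y^{(n)})=1)<1$, in the spirit of \cref{stat:thm:supnorm}, rather than the stated limit $0$; recovering the limit $0$ would require the divergence $\sqrt n\,\tau_n(\alpha)\to\infty$ that the paper asserts but that the definition of $\tau_n(\alpha)$ does not provide.
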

\begin{proof}
We have 
\begin{align*}
\mathbb{P}\left(  k_{\tau_n(\alpha)} (Y^{(n)}) = 1 \right) 
&\leq \mathbb{P}\left(  k_{\tau_n(\alpha)} (Y^{(n)}) \geq 1 \right) \\
&=\mathbb{P}\left( s_0(Y^{(n)}) \geq \tau_n(\alpha) \right) \\
&\leq\mathbb{P}\left( d_K(Y^{(n)},0) \geq \tau_n(\alpha) \right) \ ,
\end{align*}
where $d_K(Y^{(n)},0)$ denotes the Kolmogorov distance of the observations $Y^{(n)}$ to the zero function. Let $\mu_n :=(1+\varepsilon)\sqrt{2\log n}$. Then the last term can be further estimated as 
\begin{align*}
\mathbb{P}\left( d_K(Y^{(n)},0) \geq \tau_n(\alpha) \right) 
&= \mathbb{P}\left( \frac 1 n \max_{m =1, \dots, n}\left |\sum_{i=1}^m \epsilon_i + \mu_n \right|  \geq \tau_n(\alpha) \right) \\
&\leq \mathbb{P}\left( \frac 1 n \max_{m =1, \dots, n}\left |\sum_{i=1}^m \epsilon_i \right| + \frac{\mu_n}{n}  \geq \tau_n(\alpha) \right) \\
&= \mathbb{P}\left( \frac {1}{\sqrt{n}} \max_{m =1, \dots, n}\left |\sum_{i=1}^m \epsilon_i \right| + \frac{\mu_n}{\sqrt{n}}  \geq \sqrt{n}\tau_n(\alpha) \right) \ .
\end{align*}
The claim now follows from the fact that with $n\to \infty$ one has $\mu_n/\sqrt{n} \to 0$, $\sqrt{n}\tau_n(\alpha)\to \infty$, and 
 $$ \frac{1}{\sqrt{n}}  \max_{m =1, \dots, n}\left |\sum_{i=1}^m \epsilon_i \right| \overset{\mathcal D}{\rightarrow} \sup_{0\leq x\leq 1}B(x) \  ,$$ where $B$ denotes a standard Brownian motion on $[0,1]$.
\end{proof}

\subsection{Simulations using Kolmogorov signatures}
\begin{figure}[t]
\center{
 \includegraphics[width= 0.45\columnwidth]{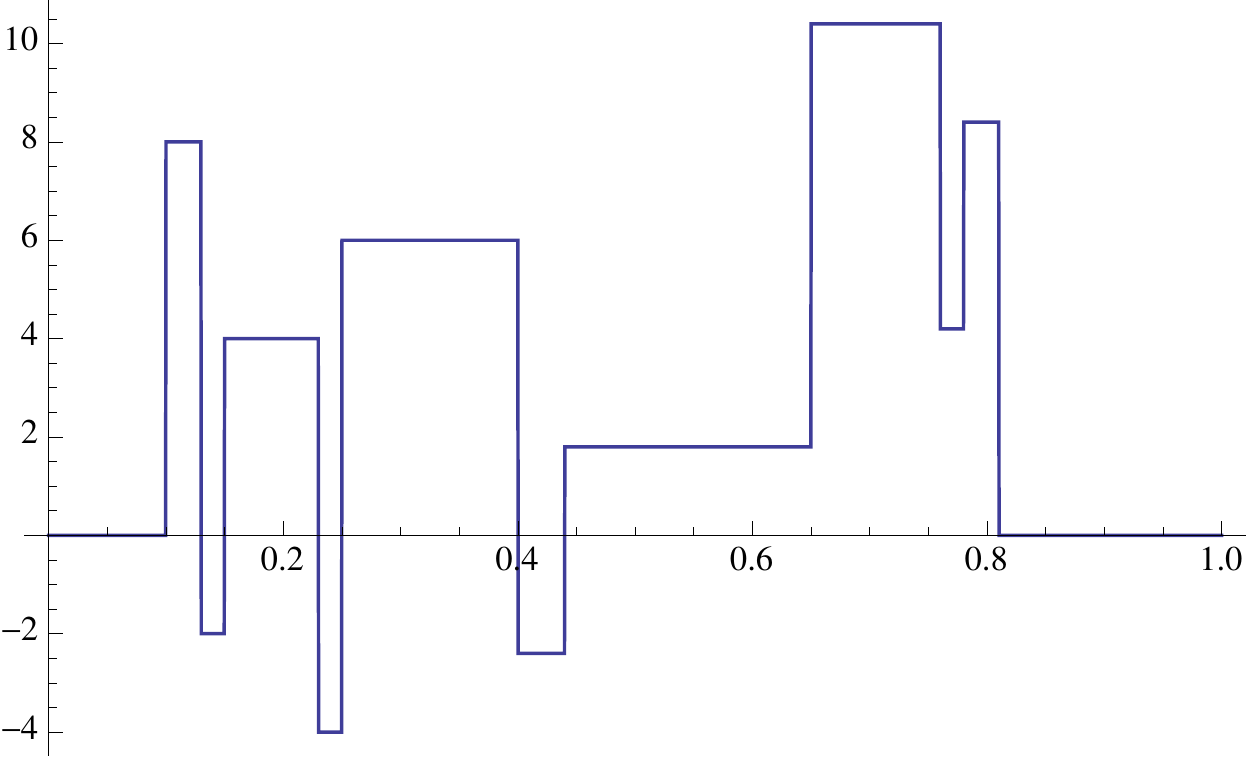}
 \qquad
 \includegraphics[width= 0.45\columnwidth]{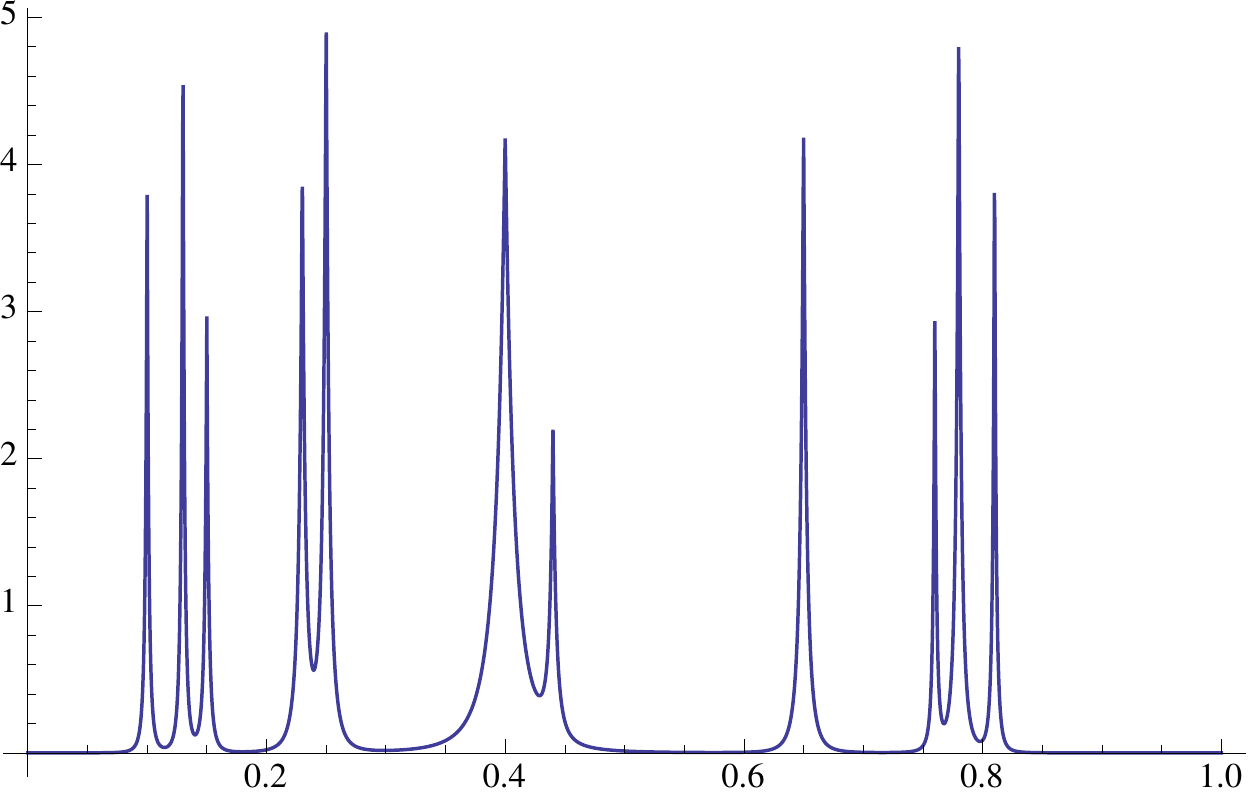}
 }
\caption{Left: signal \emph{blocks}; Right: signal \emph{bumps} \cite{Donoho1995Wavelet}.}
\label{fig:sim:signals}
\end{figure}

We illustrate the validity of our approach by means of a simulation study for the signals \emph{blocks} and \emph{bumps} \cite{Donoho1995Wavelet}, which are shown in \cref{fig:sim:signals}.
Concerning detection of modes, the two signals are of different types as they contain modes of different lengths.
For a function $f$ with $k$ modes and observations $Y$ from \eqref{intro:model} 
the theory in the previous Section shows that the number of modes $k$ can be estimated by thresholding of the 
empirical signatures. This approach clearly relies on the fact that $s_{k-1}(Y^{(n)})$ and $s_{k}(Y^{(n)})$
can be distinguished with high probability. Here, we investigate this empirically by considering the quantity
\begin{align*}
 \Delta(Y) = \frac{s_{k-1}(Y^{(n)})}{s_k(Y^{(n)})} \ .
\end{align*}
For our simulation we consider independent Gaussian noise. We note that the bound in \cref{stat:rem:gauss}
is constant for increasing $n$ if the variance is linearly increasing in $n$. This suggests that 
the expected value of $\Delta(Y)$ is also constant in this case.

We chose $\sigma= \sqrt{n}/16$ for \emph{blocks} and $\sigma =\sqrt{n}/256$ for \emph{bumps} and computed the average value 
of $\Delta(Y)$ in 1000 Monte-Carlo simulations. The results in \cref{sim:table:res} show that $\Delta(Y)$ is approximately
constant for $n\geq 1024$. Further, for both signals the ratio $\Delta(Y)$ is bounded away from $1$, which empirically confirms that
the number of modes can be estimated by thresholding.
\begin{table}[ht]
\centering
\begin{tabular}{c|c|c}
    n     & blocks  & bumps \\ \hline
    256   & 1.28726 & 2.08565 \\
    1024  & 1.57086 & 1.8708 \\
    4096  & 1.52344 & 1.85699 \\
    16384 & 1.52735 & 1.84809 \\
    65536 & 1.52647 & 1.83197 \\
\end{tabular}
\label{sim:table:res}
  \caption{Average values of $\Delta(Y)$ for \emph{blocks} and \emph{bumps} as in \cref{fig:sim:signals}. The results are obtained
  from 1000 simulations with independent Gaussian noise with $\sigma= \sqrt{n}/16$ and $\sigma =\sqrt{n}/256$ for \emph{blocks} and \emph{bumps}, respectively. As becomes evident from \cref{fig:sim:signals}, the correct number of modes of the signal is $k=5$ and $k=11$, respectively.}
\end{table}

\subsection{Sup norm based persistence signatures}

We contrast the results of the previous sections with what holds true for persistence signatures. Throughout this section, let $s_{j,\infty}$ denote the signatures with respect to the sup norm.
For simplicity we restrict our exposition
to functions with one \emph{single} mode. More precisely, we consider functions of the type
\begin{align}\label{stat:vanish_funct}
f_n(x)= \begin{cases}
 \delta_n \; \text{if} \; x\in[1/3,2/3)\ ,\\
 0 \; \text{otherwise}\ ,
 \end{cases}
\end{align}
with $\delta_n \rightarrow 0$.  It is well known that it is possible to detect the single mode of $f_n$ with probability tending to one as $n\to \infty$ if 
\begin{align}\label{stat:opt_detect_rate}
 \delta_n \sqrt{n} \rightarrow \infty \ ,
\end{align}
see, e.g., \cite{Chan11detection,vanderVaart00}. From \cref{stat:cor:muissspec} it follows, using $\epsilon = \delta_n$, that Kolmogorov signatures can correctly detect the single mode of signals in~\eqref{stat:vanish_funct-intro} by thresholding signatures at $\delta_n/2$. In contrast, for persistence signatures, there exists no thresholding strategy that can detect the single mode with probability one:

\begin{theorem}\label{stat:thm:supnorm}
Let $Y_i= f_n(i/n)+\epsilon_i$, where $\epsilon_1,\ldots,\epsilon_{n}\overset{\text{i.i.d.}}{\sim}\mathcal{N}(0,1)$, and let $f_n:[0,1]\to \RR$ be as in \eqref{stat:vanish_funct} with $\delta_n$ such that $\delta_n \sqrt{\log n} \rightarrow 0$.
For any arbitrary sequence $q_n\in \mathbb{R}$
\begin{align*}
\limsup_{n \rightarrow \infty} \mathbb{P}\left(  k^\infty _{q_n} (Y) = 1 \right) < 1\ .
\end{align*}
\end{theorem}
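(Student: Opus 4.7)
The plan is to exploit the fact that, when $\delta_n\sqrt{\log n}\to 0$, the signal $f_n$ is dominated by the i.i.d.\ Gaussian noise (whose sup norm is of order $\sqrt{\log n}$), so that the sup-norm persistence signatures of the observation coincide with those of the pure noise up to an additive error $\delta_n$. Concretely, the stability of metric signatures (\cref{intro:lem:stab}) with $d=d_\infty$ yields
\[|s_{j,\infty}(Y^{(n)})-s_{j,\infty}(\epsilon^{(n)})|\le\|f_n\|_\infty=\delta_n\qquad(j\in\NN_0),\]
where $\epsilon^{(n)}$ denotes the piecewise constant function built from $(\epsilon_1,\dots,\epsilon_n)$. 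Consequently, the event $\{k^\infty_{q_n}(Y^{(n)})=1\}=\{s_{1,\infty}(Y^{(n)})<q_n\le s_{0,\infty}(Y^{(n)})\}$ is contained in
\[A_n:=\bigl\{s_{1,\infty}(\epsilon^{(n)})<q_n+\delta_n\bigr\}\cap\bigl\{s_{0,\infty}(\epsilon^{(n)})\ge q_n-\delta_n\bigr\},\]
and it suffices to establish $\limsup_n\mathbb{P}(A_n)<1$.

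The core of the proof is a probabilistic analysis of the top persistence signatures of an i.i.d.\ standard Gaussian sequence. Using classical extreme-value theory, I would show that for every fixed $j\ge 0$, $s_{j,\infty}(\epsilon^{(n)})/\sqrt{2\log n}\to 1$ in probability. The upper bound is immediate from $s_{0,\infty}(\epsilon^{(n)})\le\|\epsilon^{(n)}\|_\infty$ together with the standard maximum-of-Gaussians estimate. For the lower bound, I would partition $\{1,\dots,n\}$ into $\lfloor\sqrt n\rfloor$ consecutive blocks of length $\lfloor\sqrt n\rfloor$: with probability tending to $1$, each of the first $j+2$ blocks contains a local maximum above $(1-o(1))\sqrt{\log n}$ and a local minimum below $-(1-o(1))\sqrt{\log n}$, and the $j+1$ resulting inter-block persistence pairs then force $s_{j,\infty}(\epsilon^{(n)})\ge(1-o(1))\sqrt{\log n}$. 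A finer argument, based on the Gumbel-type joint limit law for the top few order statistics of i.i.d.\ Gaussians, yields a non-degenerate joint limit for the rescaled pair $\sqrt{\log n}(s_{0,\infty}(\epsilon^{(n)})-\sqrt{2\log n})$ and $\sqrt{\log n}(s_{1,\infty}(\epsilon^{(n)})-\sqrt{2\log n})$.

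The proof then concludes with a case distinction on the behaviour of $q_n/\sqrt{\log n}$. If this ratio tends to infinity along a subsequence, then $\mathbb{P}(s_{0,\infty}(\epsilon^{(n)})\ge q_n-\delta_n)\to 0$ by the upper bound of the preceding step. If its limit superior is strictly less than $\sqrt 2$, then $\mathbb{P}(s_{1,\infty}(\epsilon^{(n)})<q_n+\delta_n)\to 0$ by the lower bound. In either case $\mathbb{P}(A_n)\to 0$ along the relevant subsequence. The remaining critical regime $q_n=\sqrt{2\log n}(1+o(1))$ is the main obstacle: there the top two signatures both concentrate in a window of width $\Theta(1/\sqrt{\log n})$ around $\sqrt{2\log n}$, and one must invoke the non-degeneracy of their joint rescaled limit to conclude that $q_n$ cannot lie between them with probability tending to $1$. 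This finer distributional analysis of the $0$-dimensional sublevel-set persistence pairing for i.i.d.\ Gaussian sequences is the technical heart of the argument.
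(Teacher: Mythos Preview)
Your reduction to pure noise via stability (\cref{intro:lem:stab}) and your first-order claim $s_{j,\infty}(\epsilon^{(n)})/\sqrt{2\log n}\to 1$ are both sound, and the overall strategy matches the paper's: control $s_{0,\infty}$ from above and $s_{1,\infty}$ from below at the Gumbel scale and show that no threshold can separate them with probability tending to one. However, the decisive step---your critical regime $q_n=\sqrt{2\log n}(1+o(1))$---is left as an unproven assertion. You say one must ``invoke the non-degeneracy of their joint rescaled limit,'' but you neither establish that such a joint Gumbel-scale limit for $(s_{0,\infty}(\epsilon^{(n)}),s_{1,\infty}(\epsilon^{(n)}))$ exists nor identify it. These signatures are sup-norm distances to the sets $X_0$ and $X_1$, not simple functionals of the order statistics of the noise, and a joint extreme-value limit for them is far from a standard result. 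Your $\sqrt n$-block argument delivers only the first-order rate $\sqrt{2\log n}$, not the $1/\sqrt{\log n}$-scale fluctuations you would need here. (Your three cases also do not exhaust all sequences---e.g.\ $q_n/\sqrt{\log n}$ oscillating between $1$ and $2$---though that is repairable by passing to subsequences.)

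The paper avoids the joint-limit question entirely. For the lower bound on $s_{1,\infty}$ it partitions $[0,1]$ into six equal pieces and observes that any $g\in X_1$ is monotone on at least one of them; hence $s_{1,\infty}(Y^{(n)})\ge\min_{i,\pm}\Delta_i^{\pm}$, where $\Delta_i^{\pm}$ is the sup-norm distance of the (signed) observations on the $i$th piece to the monotone cone. A short elementary argument (\cref{stat:lem_monapprox}) shows that each $\Delta_i^{\pm}$ obeys the \emph{same} Gumbel-scale tail bound as the maximum of $m=n/6$ i.i.d.\ standard normals. Combined with the Gumbel upper bound for $s_{0,\infty}\le d_\infty(f_n,Y)+\delta_n/2$, this yields directly an explicit strictly positive lower bound on $\mathbb{P}(s_{1,\infty}\ge q_n)$ whenever $q_n$ is small enough to make $\mathbb{P}(s_{0,\infty}\ge q_n)$ close to one---no joint limit law required. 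This monotone-cone lemma is precisely the device your outline is missing.
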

The proof of \cref{stat:thm:supnorm} requires some preparation.
First, recall that a sequence of random variables $Z_1,\ldots,Z_n$ follows a \emph{Gumbel extreme value limit} (GEVL)
with sequences $a_n$ and $b_n$ if
\begin{align*}
\lim_{n \rightarrow \infty} \mathbb{P}\left( \max_{1\leq i \leq n} Z_i \leq a_n + b_n x \right) = e^{-e^{-x}} .
\end{align*}
A sequence of i.i.d. standard normal random variables follows a GEVL with
\begin{align}\label{stat:def:gumbseq}
a_n=\sqrt{2 \log n} - \left(1/2 \log\log n + \log 2 \sqrt{\pi}\right)/{\sqrt{2\log n}} \ , \quad b_n= 1/\sqrt{ 2 \log n } \ .
\end{align}

Another essential ingredient of the proof of \cref{stat:thm:supnorm} is the following lemma.
\begin{lemma}\label{stat:lem_monapprox}
Let $m \in \NN$, assume $\epsilon_1,\ldots,\epsilon_{2m}\overset{\text{i.i.d.}}{\sim}\mathcal{N}(0,1)$, and set
\begin{align*}
\Delta_m= \min_{h\in\RR^{2m}:h_1\leq\ h_2\leq\dots\leq\ h_{2m}} || \epsilon - h ||_\infty \ .
\end{align*}
Then, with $a_m$ and $b_m$ as in \eqref{stat:def:gumbseq},
\begin{align*}
\lim_{m \rightarrow \infty} \mathbb{P}\left(  \Delta_m \leq a_m + b_m x \right) \leq e^{-e^{-x}}.
\end{align*}
\end{lemma}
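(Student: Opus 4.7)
The plan is to first derive a closed-form expression for $\Delta_m$ and then lower-bound it by a quantity accessible to classical Gaussian extreme-value theory. An LP-feasibility argument shows that a monotone $h$ with $\|\epsilon-h\|_\infty\leq\tau$ exists iff each pair $i<j$ obeys $\epsilon_i-\epsilon_j\leq 2\tau$, which yields $\Delta_m=\tfrac{1}{2}\bigl(\max_{1\leq i<j\leq 2m}(\epsilon_i-\epsilon_j)\bigr)_+$. Splitting $\{1,\ldots,2m\}$ into its two halves and restricting this maximum to within-half pairs gives the lower bound $\Delta_m\geq\max(D^{(L)},D^{(R)})$, where $D^{(L)}$ and $D^{(R)}$ are iid copies of the monotone-regression error on $m$ iid $N(0,1)$ variables.

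Because the two halves are independent, I obtain $\mathbb{P}(\Delta_m\leq\tau)\leq\mathbb{P}(D\leq\tau)^2$, where $D$ denotes the monotone-regression error on $m$ iid standard normals. The next ingredient needed is the upper bound
\[
\limsup_{m\to\infty}\mathbb{P}(D\leq a_m+b_m y)\leq e^{-e^{-y}}\quad\text{for every }y\in\RR.
\]
Squaring then gives $\limsup_{m\to\infty}\mathbb{P}(\Delta_m\leq a_m+b_m x)\leq e^{-2e^{-x}}\leq e^{-e^{-x}}$, which is exactly the claim.

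The hard part is this Gumbel-type upper bound for $D$. The closed form reduces the problem to the joint asymptotic behavior of the running maximum and later running minimum of an iid Gaussian sequence. I would attempt it by iterating the halving inequality $\mathbb{P}(D_{2n}\leq\tau)\leq\mathbb{P}(D_n\leq\tau)^2$: using the classical GEVL asymptotics $a_{2n}-a_n\sim b_n\log 2$ and $b_{2n}\sim b_n$, and writing $G_n(y):=\mathbb{P}(D_n\leq a_n+b_n y)$, the inequality translates to $G_{2n}(y)\leq G_n(y+\log 2)^2+o(1)$. The Gumbel CDF satisfies the fixed-point identity $e^{-e^{-y}}=\bigl(e^{-e^{-(y+\log 2)}}\bigr)^2$, and pinning it down as the correct limiting upper bound requires matching a tail estimate $1-G_n(y)\sim e^{-y}$ as $y\to\infty$, which can be extracted from a Poisson-type approximation to the number of pairs $(i,j)$ with $\epsilon_i-\epsilon_j>2(a_n+b_n y)$ together with the standard Mills-ratio tail for the normal distribution.
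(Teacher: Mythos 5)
Your reduction is sound as far as it goes: the closed form $\Delta_m=\tfrac12\bigl(\max_{i<j}(\epsilon_i-\epsilon_j)\bigr)_+$ is the standard sup-norm isotonic-regression identity, restricting to within-half pairs does give $\Delta_m\geq\max(D^{(L)},D^{(R)})$ with independent blocks, and the squaring step is logically valid. The genuine gap is the deferred ingredient, and it cannot be supplied, because the bound $\limsup_n\mathbb{P}(D_n\leq a_n+b_n y)\leq e^{-e^{-y}}$ is \emph{false} for large $y$. Indeed $2D_n\leq\max_i\epsilon_i-\min_j\epsilon_j$ pointwise, and by the asymptotic independence of the sample maximum and minimum, $\bigl(\max_i\epsilon_i-\min_j\epsilon_j-2a_n\bigr)/b_n$ converges in law to $G_1+G_2$ with $G_1,G_2$ independent standard Gumbel variables. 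Hence $\liminf_n\mathbb{P}(D_n\leq a_n+b_ny)\geq\mathbb{P}(G_1+G_2\leq 2y)$, and since $\mathbb{P}(G_1+G_2>2y)\sim 2y\,e^{-2y}$ while $1-e^{-e^{-y}}\sim e^{-y}$, one has $\mathbb{P}(G_1+G_2\leq 2y)>e^{-e^{-y}}$ for all large $y$. So the true tail of $1-G_n(y)$ is of order $y e^{-2y}$, not $e^{-y}$; the tail-matching step you propose would therefore identify the wrong limit. The Poisson approximation to the number of violating pairs is the culprit: the events $\{\epsilon_i-\epsilon_j>2\tau\}$ cluster heavily because pairs share extreme coordinates, so the expected count diverges (like $\sqrt{\log n}$ at level $\tau=a_n$) at a threshold where the probability of at least one exceedance stays bounded away from one. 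The fixed-point identity $e^{-e^{-y}}=\bigl(e^{-e^{-(y+\log 2)}}\bigr)^2$ does not pin down the Gumbel either, since the law of $\tfrac12(G_1+G_2)$ satisfies the same halving inequality.

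For comparison, the paper keeps the complementary, cross-half pairs: it bounds $\Delta_m\geq\tfrac12(M^{(1)}-M_{(2)})$ with $M^{(1)}$ the maximum of the first half and $M_{(2)}$ the minimum of the second, and then finishes via the asserted identity $\tfrac12(M^{(1)}-M_{(2)})\overset{\mathcal{D}}{=}M^{(1)}$. But $\tfrac12(M^{(1)}-M_{(2)})$ is the \emph{average} of two independent copies of $M^{(1)}$ (already for $m=1$ it is $\mathcal{N}(0,1/2)$, not $\mathcal{N}(0,1)$), and its normalized limit is again $\tfrac12(G_1+G_2)$ rather than a single Gumbel. So the obstruction you hit is not an artifact of your decomposition; it is intrinsic to the statement, whose right-hand side for large $x$ should be the distribution function of $\tfrac12(G_1+G_2)$ (suitably shifted by $\log 2$) rather than $e^{-e^{-x}}$. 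If what you ultimately need is the qualitative conclusion of \cref{stat:thm:supnorm}, namely that $\mathbb{P}(\Delta_m>a_m+b_mx)$ stays bounded away from zero, then either decomposition delivers it once the Gumbel bound is replaced by $\liminf_m\mathbb{P}(\Delta_m>a_m+b_mx)\geq\mathbb{P}(G_1+G_2>2x)>0$.
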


\begin{proof}[of \cref{stat:lem_monapprox}]
Consider a fixed vector $h\in\RR^{2m}$ such that $h_1\leq h_2 \leq \dots\leq h_{2m}$. In particular, $h_j \leq h_m$ for all $j \leq m$ and $h_j \geq h_m$ for all $j \geq m$.
Let $M^{(1)}= \max_{i=1,\ldots,m} \epsilon_i$ and $M_{(2)}= \min_{i=m+1,\ldots,2m} \epsilon_i$, and observe
\begin{align*}
|| \epsilon - h ||_\infty \geq \max \left\{ M^{(1)}-h_m, h_m - M_{(2)} \right\} \  .
\end{align*}
Hence,
\begin{align*}
\Delta_m \geq \min_{\zeta \in \RR} \max \left\{ M^{(1)}-\zeta, \zeta - M_{(2)}  \right\} = \frac{1}{2}\left(M^{(1)}-M_{(2)} \right)\overset{\mathcal{D}}{=} M^{(1)},
\end{align*}
where $A \overset{\mathcal{D}}{=} B$ means that $A$ and $B$ are equally distributed.
This implies that
\begin{align}\label{stat:eq:ineqiso}
\lim_{m \rightarrow \infty}\mathbb{P}\left(  \Delta_m \leq a_m + b_m x \right) \leq \lim_{m \rightarrow \infty} \mathbb{P}\left( M^{(1)}\leq a_m + b_m x \right)= e^{-e^{-x}},
\end{align}
because $M^{(1)}$ is the maximum of $m$ independent standard normal random variables and follows a GEVL with $a_m$ and $b_m$.
\end{proof}

\begin{proof}[of \cref{stat:thm:supnorm}]
To ease notation, we assume that $n=6 m$ for some $m\in \NN$ and hence $m=m(n)$.
First, we observe that
\begin{align}\label{stat:supnorm:ineq1}
\mathbb{P} \left(  k_{q_n}^\infty(Y^{(n)}) \geq 1 \right) = \mathbb{P} \left( s_{0,\infty}(Y^{(n)}) \geq q_n \right)
\; \text{ and } \;
\mathbb{P} \left(  k_{q_n}^\infty(Y^{(n)}) > 1 \right) = \mathbb{P} \left( s_{1,\infty}(Y^{(n)}) \geq q_n \right)\ .
\end{align}
Since $s_{0,\infty}(Y^{(n)}) \leq d_\infty(f_n, Y)+ s_0(f_n)$ (by \cref{intro:lem:stab}) and $s_0(f_n) = \delta_n/2$ it holds that
\begin{align}\label{stat:supnorm:ineq11}
\mathbb{P} \left(  k_{q_n}^\infty(Y^{(n)}) \geq 1 \right) & \leq \mathbb{P}\left(d_\infty(f_n, Y) \geq q_n - \delta_n/2 \right) \\
&=\mathbb{P} \left( \frac{d_\infty(f_n, Y)-a_n}{b_n} \geq \frac{q_n-a_n}{b_n} - \frac{\delta_n}{2 b_n} \right) \nonumber \\
&= \mathbb{P} \left( \frac{d_\infty(f_n, Y)-a_n}{b_n} \geq \frac{q_n-a_n}{b_n} + o(1) \right), \nonumber
\end{align}
with $a_n$ and $b_n$ as in \eqref{stat:def:gumbseq}.
Since $d_\infty(f_n, Y)=\max_{i=1,\ldots,n} |\epsilon_i|$ it follows that for any $x\in \RR$
one has $\mathbb{P}\left( d_\infty(f_n, Y) \geq x \right) \leq 2 \mathbb{P}\left( \max_{i=1,\ldots,n} \epsilon_i \geq x \right)$ by symmetry. Therefore,
\begin{align}\label{stat:supnorm:ineq121}
\lim_{n\rightarrow \infty} \mathbb{P} \left( \frac{d_\infty(f_n, Y)-a_n}{b_n} \geq x \right) \leq 2 \left(1 - e^{-e^{-x}}\right).
\end{align}
Further, for $i=0,\ldots,5$ we define
\begin{equation*}
\Delta_i^{\pm}= \min_{h\in\RR^{m}:h_1\leq\ h_2\leq\dots\leq\ h_{m}} || \pm (Y_{im+1},\ldots,Y_{(i+1)m}) - h ||_\infty \ .
\end{equation*}
Recall that $s_{1,\infty}(Y^{(n)}) = \inf_{g \in X_{1}} d_{\infty}(g,Y)$. Observe that any $g \in X_{1}$ is either monotonically increasing or decreasing on $[i/6,(i+1)/6]$ for some $0\leq i\leq 5$. Otherwise $g$ would have two modes, which contradicts $g \in X_{1}$.
For this reason, we find $s_{1,\infty}(Y^{(n)}) \geq \min \left\{ \Delta_0^{-}, \Delta_0^{+}, \ldots,\Delta_5^{-}, \Delta_5^{+} \right\}$. Note that
$ \Delta_0^{-}, \Delta_0^{+}, \ldots,\Delta_5^{-}, \Delta_5^{+}$ are identically distributed and independent asymptotically. Therefore,
\begin{align}\label{stat:supnorm:ineq21}
\mathbb{P} \left(  k_{q_n}^\infty(Y^{(n)}) > 1 \right)
= \lim_{n \rightarrow \infty} \mathbb{P}\left( s_{1,\infty}(Y^{(n)}) \geq q_n \right) \nonumber
& \geq \lim_{n \rightarrow \infty} \mathbb{P} \left(\min \left\{ \Delta_0^{-}, \Delta_0^{+}, \ldots,\Delta_5^{-}, \Delta_5^{+} \right\} \geq q_n \right) \\ \nonumber
& =  \lim_{n \rightarrow \infty} \left(1 - \mathbb{P} \left(\Delta_0^- < q_n \right) \right)^{12}.
\end{align}
In order to prove the assertion, we show that for some $\beta\in(0,1)$
\begin{equation*}
 \lim_{n \rightarrow \infty}\mathbb{P} \left(  k_{q_n}^\infty(Y^{(n)}) \geq 1 \right) \geq 1- \beta
\end{equation*}
already implies
\begin{equation*}
 \lim_{n \rightarrow \infty}\mathbb{P} \left(  k_{q_n}^\infty(Y^{(n)}) > 1 \right) > 0
\end{equation*}
for any sequence $q_n\in \RR$.
In other words, no thresholding procedure can estimate the number of true modes $k=1$ with probability tending to one.
Combining \eqref{stat:supnorm:ineq11} and \eqref{stat:supnorm:ineq121} shows that
$\lim_{n \rightarrow \infty}\mathbb{P}\left(  k_{q_n}^\infty(Y^{(n)}) \geq 1 \right)\geq 1-\beta$
implies 
\begin{equation*}
 q_n \leq a_n+b_n z_\beta+ o(b_n)
\end{equation*}
where $z_\beta$ is defined by $2(1-\exp(-\exp(-z_\beta)))=\beta$ (it is assumed w.l.o.g. that $\beta < e^{1/6}/2$).
We then find from \eqref{stat:supnorm:ineq21} that
\begin{align*}
\lim_{n \rightarrow \infty} \mathbb{P}\left( k_{q_n}^\infty(Y^{(n)}) > 1 \right)
& \geq \lim_{n \rightarrow \infty} \left(1 -  \mathbb{P} \left(\Delta_0^- < a_n+b_n z_\beta+ o(b_n) \right) \right)^{12}\\
& = \lim_{n \rightarrow \infty} \left(1 - \mathbb{P} \left(\frac{\Delta_0^- - a_m}{b_m} < \frac{a_n-a_m}{b_m} + \frac{b_n}{b_m} z_\beta \right) \right)^{12}\\
& \geq \left( 1- e^{-e^{-z_\beta-\log 6}} \right)^{12}.
\end{align*}
Here the last inequality follows from \cref{stat:lem_monapprox} together with $\frac{b_n}{b_m}\rightarrow 1$ and $\frac{a_n - a_m}{b_m} \rightarrow \log 6$.
The proof is then completed by observing that $z_\beta + \log 6 < \infty$, which yields $\left( 1- e^{-e^{-z_\beta-\log 6}} \right) > 0$.
\end{proof}

\section{Taut strings}\label{sec:taut-strings}
In order to compute Kolmogorov signatures, we require some well known and also some less known results about \emph{taut strings}, 
see e.g. \cite{DavKov01, Mammen1997Locally}.  We prove a result that is central for our exposition and appears to be interesting in its own right: Taut strings minimize the number of critical points within a certain (quite general) class of functions.

For a given $f \in \L$ with antiderivative $F$, consider the $d_\infty$-ball $D_\alpha(F)$ of radius $\alpha\geq 0$ around $F$. We refer to $D_\alpha(F)$  as the \emph{$\alpha$-tube} around $F$. The \emph{taut string}, denoted by $F_\alpha$, is the unique function in $D_\alpha(F)$ whose graph, regarded as a curve in $\mathbb{R}^2$, has minimal total curve length, subject to boundary conditions 
\begin{align*}
F_\alpha(0)=F(0) \quad\ \text{and} \quad F_\alpha(1)=F(1) \ . 
\end{align*}
For existence and uniqueness, we refer to~\cite{Grasmair2007Equivalence, Grasmair2008Generalizations}. $F_\alpha$ is \emph{Lipschitz continuous} for all $\alpha >0$ (see~\cite{Grasmair2007Equivalence}, proof of Lemma 2); thus its derivative $f_\alpha$ (defined a.e.) is in $\L^\infty$ and we may hence choose $f_\alpha\in \L$.

Therefore, the properties that $F_\alpha \in D_\alpha(F)$ and that the graph of $F_\alpha$ has minimal curve length are equivalent to 

\begin{align*}
d_K(f,f_\alpha)\leq \alpha \quad \text{and} \quad \int_0^1 \sqrt{1+f^2_\alpha(t)} \, \mathrm d t = \min \ ,
\end{align*}
respectively.
The aim of this section is to show the following result.
\begin{figure}[t]
\center
 \includegraphics[width= 0.5\columnwidth]{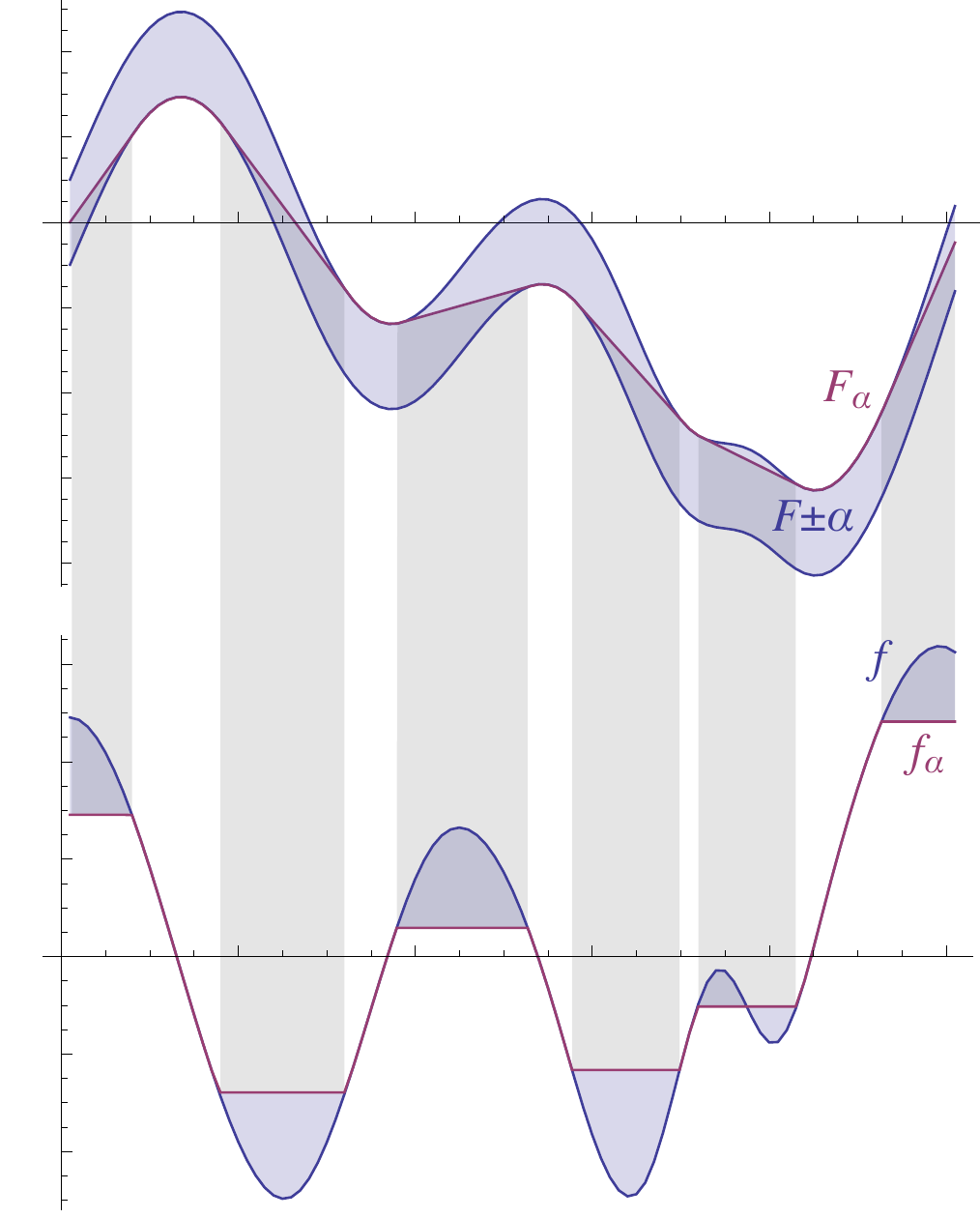}
 \caption{Taut string $F_\alpha$ (purple) in the $\alpha$-tube around $F$ (top) and its derivative $f_\alpha$ (bottom).
  }
 \label{fig:taut-string-example}
\end{figure}

\begin{theorem}\label{thm:tautStringMinModes}
For all $f\in \L$ and all $\alpha >0$, the derivative $f_\alpha\in \L$ of the taut string~$F_\alpha$ minimizes the number of modes among all function $g\in \L$ with $d_K(f,g)\leq \alpha$. 
\end{theorem}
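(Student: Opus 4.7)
Proof plan:

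My strategy is to exploit the contact structure of the taut string $F_\alpha$ with the boundaries of the $\alpha$-tube around $F$, to derive sign constraints on $G-F_\alpha$ (where $G$ is the antiderivative of $g$ with $G(0)=0$), and to then convert these into an explicit partition witnessing $m=\peaks(f_\alpha)$ modes of $g$. I would begin by exploiting the arc-length minimization property to establish that $F_\alpha$ is piecewise linear on $[0,1]$ with kinks located only at the contact set $\{t:|F_\alpha(t)-F(t)|=\alpha\}$, and that a kink where the slope of $F_\alpha$ decreases (resp.\ increases) must touch the upper boundary $F+\alpha$ (resp.\ lower boundary $F-\alpha$). Consequently each mode of the piecewise constant $f_\alpha$ is witnessed by a pair $(L_k,U_k)$ with $L_k<U_k$ of adjacent lower/upper contacts on which $F_\alpha$ is affine with slope $c_{(k)}$ equal to the $k$th peak value of $f_\alpha$; enumerating in order gives $L_1<U_1\leq L_2<U_2\leq\ldots\leq L_m<U_m$ in $(0,1)$, together with intermediate valley slopes $v_{(k)}<c_{(k)}$ on intervals $[U_k,L_{k+1}]$.

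Next, for any $g\in\L$ with $d_K(f,g)\leq\alpha$, the pointwise tube constraint at the contact points yields $G(L_k)\geq F(L_k)-\alpha=F_\alpha(L_k)$ and $G(U_k)\leq F(U_k)+\alpha=F_\alpha(U_k)$, so $H:=G-F_\alpha$ exhibits the alternating sign pattern $H(L_k)\geq 0\geq H(U_k)$. Integrating on the peak interval $[L_k,U_k]$ (where $f_\alpha\equiv c_{(k)}$) gives $\int_{L_k}^{U_k}(g-f_\alpha)\,dt=H(U_k)-H(L_k)\leq 0$, so the average of $g$ on that interval does not exceed $c_{(k)}$; combining with the two-sided bounds $G(L_k)\leq F_\alpha(L_k)+2\alpha$ and $G(U_k)\geq F_\alpha(U_k)-2\alpha$ from the opposite side of the tube also yields a matching lower bound, pinning the average of $g$ on $[L_k,U_k]$ within $4\alpha/(U_k-L_k)$ of $c_{(k)}$. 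The analogous argument on each valley interval $[U_k,L_{k+1}]$ pins the average of $g$ to within $4\alpha/(L_{k+1}-U_k)$ of $v_{(k)}$.

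To conclude, I would use these averaged inequalities to construct a partition $P=\{0\leq\sigma_0<\tau_1<\sigma_1<\ldots<\tau_m<\sigma_m\leq 1\}$ with $\tau_k\in[L_k,U_k]$ and $\sigma_k\in[U_k,L_{k+1}]$ (and $\sigma_0\in[0,L_1]$, $\sigma_m\in[U_m,1]$) such that $g(\tau_k)>\max\bigl(g(\sigma_{k-1}),g(\sigma_k)\bigr)$ for every $k$; this partition certifies $\peaks(g,P)\geq m$. The witnesses $\tau_k$ are produced by choosing points inside $[L_k,U_k]$ where $g$ exceeds its average, and the $\sigma_k$ by choosing points inside valley intervals where $g$ falls below its average; the $\L$-representative requirement \eqref{eq:uniqueRep} is essential here since it guarantees that $g(t)$ genuinely reflects local averages in every neighborhood of $t$, so such witnesses can indeed be located arbitrarily close to any prescribed point.

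The main obstacle is precisely this last step in the regime where a peak or valley interval of $f_\alpha$ is so short that the tube defect $4\alpha/\text{length}$ swallows the peak–valley gap $c_{(k)}-v_{(k)}$; in that case the localized averaged argument breaks down. I would handle this by enlarging the active window to encompass several neighboring contacts—so that the total ``useful length'' dominates $\alpha$—and showing that the cumulative alternation of $H$ across this enlarged window still forces a strict local maximum of $g$, using that $F_\alpha$'s global extremality in the tube produces sharper alternation than any local patch would. An alternative is to induct on $m$: remove the smallest-amplitude mode of $f_\alpha$ by a taut-string surgery that yields a new taut string $F_\alpha'$ for a slightly modified (wider) tube that still contains $G$, apply the inductive hypothesis to $F_\alpha'$ to obtain $m-1$ modes of $g$, and then recover the $m$th mode from the removed peak via the sign constraints that remain active. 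Either way, the conceptual heart of the proof is that the extremal alternation pattern of $F_\alpha$ against the tube boundary compels any in-tube competitor $g$ to oscillate at least as much as $f_\alpha$.
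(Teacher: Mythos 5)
Your overall strategy---read off sign constraints on $H=G-F_\alpha$ at the contact points of the taut string and convert averaged inequalities into a partition witnessing the modes of $g$---is exactly the right one, and it is the route the paper takes. But there is a concrete error at the very first structural step that derails the rest: you have the contact orientation backwards. A kink of $F_\alpha$ where the slope \emph{increases} is a convex kink, and a taut curve can only bend convexly where it is pressed against the \emph{upper} boundary $F+\alpha$; a decreasing-slope (concave) kink must sit on the \emph{lower} boundary $F-\alpha$ (this is \cref{lemma:Grasmair-new} / \cref{lem:tautStringVertices}: the string is convex off the bottom contact set and concave off the top contact set). Consequently the affine segment carrying a local maximum of $f_\alpha$ runs from a \emph{top} contact at its left endpoint $a$ to a \emph{bottom} contact at its right endpoint $b$ (\cref{lem:inflectionSwich}), not from a lower contact $L_k$ to an upper contact $U_k$ as you assert. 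With the correct orientation one gets $G(a)\leq F(a)+\alpha=F_\alpha(a)$ and $G(b)\geq F(b)-\alpha=F_\alpha(b)$, hence
\begin{align*}
\frac{1}{b-a}\int_a^b g \;=\; \frac{G(b)-G(a)}{b-a}\;\geq\;\frac{F_\alpha(b)-F_\alpha(a)}{b-a}\;=\;c_{(k)} \ ,
\end{align*}
i.e.\ the average of $g$ over the peak segment is at least the peak value \emph{exactly}, with no error term, and dually at most the valley value on each valley segment. Your reversed orientation produces the inequality in the useless direction (average $\leq c_{(k)}$), forcing you to fall back on the crude two-sided tube bound and incur the defect $4\alpha/(U_k-L_k)$. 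The ``main obstacle'' you then describe---short peak intervals swallowing the peak--valley gap---is not a real difficulty of the problem but an artifact of this sign error, and neither of your proposed repairs (window enlargement, taut-string surgery) is worked out to the point where it could be checked; as stated the proof does not close.

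Two smaller points. First, your claim that $F_\alpha$ is globally piecewise linear with kinks only at contact points is only true for piecewise constant $f$; for general $f\in\L$ the taut string is affine only \emph{off} the contact set and may coincide with $F\pm\alpha$ on sets where it is not piecewise linear, which is why the paper works with maximally convex/concave intervals and inflection intervals rather than isolated kinks. Second, to locate the witnesses $\tau_k,\sigma_k$ you do not need the $\L$-representative property of $g$: since $g$ is integrable, the sets where $g$ is at least (resp.\ at most) its average over a peak (resp.\ valley) segment have positive measure and are in particular nonempty, which is all the partition argument requires. Once the orientation is fixed, choosing one such point in the interior of each inflection interval and invoking the interleaving of peaks and valleys (\cref{lemma:maximallyConvex}, \cref{lemma:positiveInflection}) yields $\peaks(g)\geq\peaks(f_\alpha)$ directly.
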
 
The proof requires some preparation. Let the top and bottom functions of the $\alpha$-tube around the antiderivative $F$ of $f\in \L$ be denoted by
\begin{align*}
T_\alpha (t) := F(t)+\alpha \quad\text{and}\quad B_\alpha (t) := F(t)-\alpha \ ,
\end{align*}
respectively. Furthermore, let 
\begin{align*}
S_{T,\alpha} =\{t\in [0,1]  :  F_\alpha (t) = T_\alpha(t) \} \quad\text{and}\quad S_{B,\alpha} =\{t\in [0,1]  :  F_\alpha (t) = B_\alpha(t) \} 
\end{align*}
denote the sets where the taut string touches the top (resp.~bottom) of the $\alpha$-tube.
\begin{lemma}[\citet{Grasmair2008Generalizations}] \label{lemma:Grasmair-new} 
For every $\alpha >0$, the taut string $F_\alpha$ is the unique function in $D_\alpha(F)$ with $F_\alpha(0)=F(0)$ and $F_\alpha(1)=F(1)$ that is convex on every connected component of $(0,1)\setminus S_{B,\alpha}$ and concave on every connected component of $(0,1)\setminus S_{T,\alpha}$. In particular, $F_\alpha$ is piecewise affine outside of $S_{B,\alpha}\cup S_{T,\alpha}$.
\end{lemma}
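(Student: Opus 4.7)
The plan is to characterize $F_\alpha$ by invoking its definition as the unique minimizer of the strictly convex length functional $L(G) = \int_0^1 \sqrt{1+G'(t)^2}\,dt$ over the closed convex set $\mathcal{A}$ of Lipschitz functions $G$ satisfying $B_\alpha \leq G \leq T_\alpha$ and $G(0)=F(0),\ G(1)=F(1)$. Existence, uniqueness, and Lipschitz continuity of the minimizer $F_\alpha$ follow from strict convexity of $L$ in $G'$ and may be cited from \cite{Grasmair2007Equivalence, Grasmair2008Generalizations}. The proof then reduces to establishing the two convexity/concavity assertions; the piecewise-affine statement and the uniqueness of $F_\alpha$ among functions satisfying these properties follow as corollaries.

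For the convexity claim on a connected component $(a,b)$ of $(0,1)\setminus S_{B,\alpha}$, I would argue by contradiction using a local chord-replacement. Suppose $F_\alpha$ fails to be convex on $(a,b)$. Since local convexity at every interior point of an interval is equivalent to global convexity on that interval, there is some $u\in(a,b)$ at which $F_\alpha$ is not locally convex; hence every neighborhood of $u$ contains $s<u'<t$ with $F_\alpha(u')>C_{s,t}(u')$, where $C_{s,t}$ is the chord from $(s,F_\alpha(s))$ to $(t,F_\alpha(t))$. By shrinking $[s,t]$ around $u$, continuity of $F_\alpha$ and $B_\alpha$ together with $F_\alpha(u)>B_\alpha(u)$ guarantee $C_{s,t}\geq B_\alpha$ on $[s,t]$; one further shrinks into a neighborhood on which $F_\alpha<T_\alpha$ so that $C_{s,t}\leq T_\alpha$ as well. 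Replacing $F_\alpha|_{[s,t]}$ by $C_{s,t}$ produces a competitor in $\mathcal{A}$ whose curve length on $[s,t]$ equals $\sqrt{(t-s)^2+(F_\alpha(t)-F_\alpha(s))^2}$, strictly smaller than the length of $F_\alpha$ there (the straight segment is the unique geodesic between its two endpoints in $\mathbb{R}^2$), contradicting minimality of $F_\alpha$. The concavity claim on each connected component of $(0,1)\setminus S_{T,\alpha}$ is proved by the symmetric argument, replacing a below-chord dip with the chord itself.

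The piecewise-affine conclusion is then immediate: on any connected component of $(0,1)\setminus(S_{B,\alpha}\cup S_{T,\alpha})$ the function $F_\alpha$ is simultaneously convex and concave, hence affine. For uniqueness, a function $G\in\mathcal{A}$ satisfying the boundary conditions together with the analogous convexity/concavity properties with respect to its own touch sets $\{G=B_\alpha\}$ and $\{G=T_\alpha\}$ verifies the Karush--Kuhn--Tucker conditions of the convex problem $\min_{\mathcal{A}} L$, which are sufficient by convexity of $L$ and $\mathcal{A}$; strict convexity of $L$ in $G'$ then forces $G=F_\alpha$. The main obstacle I anticipate is the subcase in which the non-convexity point $u\in(a,b)$ also belongs to $S_{T,\alpha}$, so that $F_\alpha$ is pinned against the top of the tube exactly at $u$: here the chord-replacement from above is not obviously admissible, and one must argue that the failure of convexity forces $F_\alpha<T_\alpha$ strictly on at least one open side of $u$, on which the chord modification can be localized. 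This case analysis is where the detailed arguments of \cite{Grasmair2008Generalizations} reside.
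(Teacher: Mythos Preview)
The paper does not prove this lemma; it is quoted from \citet{Grasmair2008Generalizations} without argument, so there is no in-paper proof to compare your proposal against. Your sketch is a reasonable reconstruction of the standard variational argument and is essentially correct in outline.

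Regarding the gap you flag (the non-convexity point $u$ lying in $S_{T,\alpha}$): your instinct that this is the delicate case is right, but the resolution can be made clean by replacing the chord $C_{s,t}$ with the \emph{lower convex envelope} $\widehat{F}$ of $F_\alpha$ on $[s,t]$. This envelope satisfies $\widehat{F}\leq F_\alpha\leq T_\alpha$ automatically (no localization to a region where $F_\alpha<T_\alpha$ is needed), and $\widehat{F}\geq C_{s,t}$; since $F_\alpha(u)>B_\alpha(u)$ and $F_\alpha,B_\alpha$ are continuous, for $[s,t]$ small enough one has $C_{s,t}\geq B_\alpha$ on $[s,t]$, hence $\widehat{F}\geq B_\alpha$ as well. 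The resulting competitor is admissible and has strictly smaller length unless $F_\alpha$ was already convex on $[s,t]$, giving the desired contradiction. With this modification the existence/characterization part of your argument goes through without a case split; the KKT-based uniqueness argument is correct as stated.
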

\Cref{lemma:Grasmair-new} gives rise to a characterization of the modes of the derivative of a taut string (see \cref{lemma:positiveInflection} below). This characterization resembles the fact that an isolated local maximum (local minimum) of $f_\alpha$ corresponds to a point (or interval) where its antiderivative $F_\alpha$ changes from being locally convex to locally concave (concave to convex), see \cref{fig:taut-string-example}.  Accordingly, we define:
\begin{definition}[maximally concave, convex, and affine intervals]\label{def:maximallyConvex}
Fix $\alpha >0$. An interval $I=[a,b]\subset [0,1]$ is called \emph{maximally affine} if $F_\alpha$ is affine on $I$ but not on any interval that properly contains $I$. An interval $I=[a,b]\subset [0,1]$ that is \emph{not} maximally affine is called \emph{maximally convex (concave)} if $F_\alpha$ is convex (concave) on $I$ but not on any interval that properly contains $I$.
\end{definition}
Observe that by \cref{lemma:Grasmair-new}, if $F_\alpha$ is not affine on all of $[0,1]$, then every $t\in [0,1]$ is contained in a maximally concave or a maximally convex interval (or possibly both). By construction, maximally convex (concave) intervals are mutually disjoint (within their respective classes).
\begin{definition}[positive and negative inflection intervals]\label{def:inflection}
Fix $\alpha >0$. 
An interval $I=[a,b]\subset (0,1)$ is called a \emph{positive} (\emph{negative}) \emph{inflection interval} of~$F_\alpha$ if $I$~is a maximally affine interval of~$F_\alpha$ and $F_\alpha$~is 
convex (concave) on some non empty neighborhood of~$a$ and concave (convex) on some non empty neighborhood of~$b$.
\end{definition}
Notice that we deliberately require that $a>0$ and $b<1$ in our definition of inflection intervals. As a direct consequence of \cref{lemma:Grasmair-new} we obtain:
\begin{lemma}
\label{lem:inflectionSwich}
Fix $\alpha >0$. 
If $[a,b]$ is a positive inflection interval of $F_\alpha$, then
 $F_\alpha(a) = T_\alpha(a)$ and $F_\alpha(b) = B_\alpha(b)$; if it is a negative inflection interval, then $F_\alpha(a) = B_\alpha(a)$ and $F_\alpha(b) = T_\alpha(a)$.
\end{lemma}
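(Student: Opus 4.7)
The plan is to argue by contradiction, leaning on Grasmair's characterization in Lemma \ref{lemma:Grasmair-new} together with the observation that if $F_\alpha$ is affine on two overlapping intervals, those two affine pieces must coincide, so $F_\alpha$ is affine on their union. I will treat the claim $F_\alpha(a) = T_\alpha(a)$ in detail for a positive inflection interval; the other three cases (the statement at $b$, and the two statements for negative inflection intervals) follow by symmetric arguments obtained by reflecting the roles of $a \leftrightarrow b$ and $T_\alpha \leftrightarrow B_\alpha$, respectively.

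Suppose, toward a contradiction, that $a \notin S_{T,\alpha}$. Continuity of $F_\alpha - T_\alpha$ makes $S_{T,\alpha}$ closed, so $a$ lies in an open set $U \subset (0,1) \setminus S_{T,\alpha}$. The connected component of $(0,1) \setminus S_{T,\alpha}$ containing $a$ contains $U$, and Lemma \ref{lemma:Grasmair-new} forces $F_\alpha$ to be concave there, in particular on $U$. By definition of positive inflection interval, $F_\alpha$ is also convex on some open neighborhood $V$ of $a$. On $W := V \cap U$, a common open neighborhood of $a$, the function $F_\alpha$ is simultaneously convex and concave, hence affine. But $F_\alpha$ is also affine on $[a,b]$ by hypothesis, and the overlap $W \cap [a,b]$ contains a set of the form $[a, a+\eta)$ for some $\eta > 0$, so the affine functions describing $F_\alpha$ on $W$ and on $[a,b]$ must coincide on this overlap. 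Two affine functions agreeing on a non-degenerate interval are identical; hence $F_\alpha$ is affine on the union $W \cup [a,b]$. Since $W$ contains points strictly to the left of $a$, this union properly contains $[a,b]$, contradicting the maximality of $[a,b]$ as an affine interval. Therefore $a \in S_{T,\alpha}$, i.e., $F_\alpha(a) = T_\alpha(a)$.

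The analogous argument at $b$ uses the concavity on a right neighborhood of $b$ supplied by the positive inflection property and the convexity on a neighborhood of $b$ afforded by $b \notin S_{B,\alpha}$ via Lemma \ref{lemma:Grasmair-new}, yielding $F_\alpha(b) = B_\alpha(b)$. The negative case is handled by exchanging the roles of top and bottom, i.e., replacing $S_{T,\alpha}$ with $S_{B,\alpha}$ and swapping \emph{convex} with \emph{concave} throughout. The main obstacle, which is really a matter of care rather than difficulty, lies in handling the geometric setup cleanly: one must make sure that $U$, $V$, and $W$ are two-sided open neighborhoods of $a$ (so that the affine extension genuinely crosses $a$ from the left), that $S_{T,\alpha}$ and $S_{B,\alpha}$ are closed (immediate from continuity of $F_\alpha$, $T_\alpha$, and $B_\alpha$), and that the overlap $W \cap [a,b]$ has positive length, which is what allows uniqueness of the affine extension to be invoked.
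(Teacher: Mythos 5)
Your argument is correct and is exactly the natural elaboration of what the paper intends: the paper states this lemma without proof as a ``direct consequence'' of \cref{lemma:Grasmair-new}, and your contradiction via ``$a\notin S_{T,\alpha}$ $\Rightarrow$ concave near $a$ by Grasmair, hence convex and concave, hence affine, hence the affine piece extends past $a$, contradicting maximality'' is precisely that consequence spelled out. The only (cosmetic) caveat is the degenerate case $a=b$, where the overlap $W\cap[a,b]$ is a single point; there the conclusion follows even more directly, since $F_\alpha$ affine on the two-sided neighborhood $W$ already contradicts maximality of $[a,a]$.
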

Moreover we have:
\begin{lemma}\label{lemma:maximallyConvex}
Fix $\alpha >0$. Then~$F_\alpha$ has the following properties:
\begin{enumerate}[(i)]
\item The number of maximally convex, the number of maximally concave, and the number of inflection intervals of~$F_\alpha$  is finite.
\item Maximally convex and maximally concave intervals are interleaved, i.e., the set of points between two consecutive maximally convex (concave) intervals belongs to a maximally concave (convex) interval.
\item The intersection of a maximally convex (concave) with an immediately consecutive maximally concave (convex) interval is a positive (negative) inflection interval, and every inflection interval arises in this way.
\end{enumerate}
\end{lemma}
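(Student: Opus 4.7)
The plan is to exploit the characterization in \cref{lemma:Grasmair-new} together with the following starting observation: a point where $F_\alpha$ is strictly convex cannot lie in any connected component of $(0,1)\setminus S_{T,\alpha}$ (on which $F_\alpha$ is concave), so such a point must belong to $S_{T,\alpha}$; symmetrically, strict concavity forces membership in $S_{B,\alpha}$. Since a maximally convex interval is by definition not affine, it contains a point of strict convexity, hence intersects $S_{T,\alpha}$; analogously, every maximally concave interval intersects $S_{B,\alpha}$. This observation is the engine that drives all three parts.

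For (ii), I consider two consecutive maximally convex intervals $I_1=[a_1,b_1]$ and $I_2=[a_2,b_2]$, assume $b_1<a_2$, and exhibit a maximally concave interval in the gap. Maximality of $I_1$ implies $F_\alpha$ is not convex on $[a_1,b_1+\epsilon]$ for any $\epsilon>0$, so there is a triple of points just past $b_1$ witnessing a chord that $F_\alpha$ crosses from below---strict concavity. By the starting observation any such witness lies in $S_{B,\alpha}$, which in particular rules out $(b_1,a_2)\subseteq S_{T,\alpha}$ since $S_{T,\alpha}\cap S_{B,\alpha}=\emptyset$ when $\alpha>0$. Taking the connected component of $(0,1)\setminus S_{T,\alpha}$ that hits the gap and extending this concave stretch maximally produces a maximally concave interval in the closure of the gap; it cannot merge with $I_1$ or $I_2$ without contradicting their maximality. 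The symmetric argument swaps the roles of convex and concave.

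For (iii), let $I=[a,b]$ be maximally convex and $J=[c,d]$ be maximally concave with no maximally convex or concave interval strictly between them; by (ii) they must overlap, so $c\leq b$. On $I\cap J=[c,b]$ the function $F_\alpha$ is both convex and concave, hence affine. It is in fact maximally affine: extending to $[c-\epsilon,b]$ would make $F_\alpha$ affine, hence convex, on a proper superset of $I$, contradicting the maximality of $I$, and the right side is symmetric. Moreover $F_\alpha$ is convex on a left neighborhood of $[c,b]$ (inside $I$) and concave on a right neighborhood (inside $J$), which is exactly the definition of a positive inflection interval. Conversely, any positive inflection interval $[p,q]$ extends leftward along convexity to a maximally convex interval ending at $q$ and rightward along concavity to a maximally concave interval beginning at $p$, so the intersection construction exhausts all inflection intervals.

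For (i), I argue by contradiction: if there are infinitely many maximally convex intervals, by compactness of $[0,1]$ they accumulate at some $t^*\in[0,1]$, and by (ii) infinitely many maximally concave intervals accumulate at $t^*$ as well. Picking a point of $S_{T,\alpha}$ in each of the former and a point of $S_{B,\alpha}$ in each of the latter yields sequences $t_n^T,t_n^B\to t^*$ with $F_\alpha(t_n^T)=F(t_n^T)+\alpha$ and $F_\alpha(t_n^B)=F(t_n^B)-\alpha$. Continuity of $F_\alpha$ and $F$ then forces $F(t^*)+\alpha=F_\alpha(t^*)=F(t^*)-\alpha$, contradicting $\alpha>0$. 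Finiteness for maximally concave intervals is identical, and by (iii) the number of inflection intervals is bounded by the number of adjacent convex/concave pairs, hence also finite. The main obstacle I anticipate is the careful treatment of degenerate and boundary configurations---single-point inflection intervals, maximally convex or concave intervals that abut $0$ or $1$, and the possibility that strict concavity in the gap arises from $F_\alpha$ coinciding with $T_\alpha$ or $B_\alpha$ on a subinterval---so that the extensibility claims in (ii) and the maximal-affinity claim in (iii) can be verified without ambiguity.
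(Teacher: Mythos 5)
Your proof is correct in substance and reaches the same conclusions, but it routes the finiteness claim (i) through a genuinely different argument. The paper's proof spends essentially all of its effort on (i): it observes that over any maximally convex, maximally concave, or inflection interval the graph of $F_\alpha$ must contain an affine segment joining the graph of $T_\alpha$ to that of $B_\alpha$, so the arc length of $F_\alpha$ over each such interval is at least the Euclidean distance $d_\alpha>0$ between these two compact disjoint sets; Lipschitz continuity of $F_\alpha$ then gives a uniform positive lower bound on the lengths of these pairwise disjoint intervals, hence finiteness, and (ii)--(iii) are dismissed as straightforward consequences of \cref{lemma:Grasmair-new}. You instead prove (ii) and (iii) first and obtain (i) by a soft compactness argument: infinitely many maximally convex intervals force points of $S_{T,\alpha}$ and, via the interleaving of (ii), points of $S_{B,\alpha}$ to accumulate at a common $t^*$, whence continuity gives $F(t^*)+\alpha=F(t^*)-\alpha$. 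Both mechanisms exploit the tube width $2\alpha>0$; the paper's version is quantitative (it bounds the number of intervals in terms of $\alpha$ and the Lipschitz constant), while yours is qualitative but avoids the arc-length minimality of the taut string entirely, and your starting observation (a non-locally-affine point of a convex stretch must lie in $S_{T,\alpha}$, since \cref{lemma:Grasmair-new} would otherwise force local concavity and hence local affinity) is precisely what makes the paper's ``affine segment from top to bottom'' claim true, so the two proofs are close cousins. Two spots deserve tightening: in (i), first pass to a subsequence along which the interval lengths tend to zero (infinitely many disjoint intervals admit such a subsequence), since otherwise the touching point chosen inside a long interval containing $x_n$ need not converge to $t^*$; and in (ii), the ``triple witnessing a chord'' is cleaner if replaced by the observation that any point of the open gap possessing a non-affine convex neighborhood would generate a maximally convex interval strictly between $I_1$ and $I_2$, contradicting consecutiveness, so that by the dichotomy of \cref{lemma:Grasmair-new} the derivative is nonincreasing across the whole gap and the gap is covered by a single maximally concave interval.
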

\begin{proof}
Let $T_\alpha$ and $B_\alpha$ denote the top and bottom of the $\alpha$-tube around $F_\alpha$, respectively. Since $F_\alpha$ is continuous, the graphs of $T_\alpha$ and $B_\alpha$ are compact sets. Let $I$ be a maximally concave, a maximally convex, or an inflection interval of $F_\alpha$. By \cref{def:maximallyConvex,def:inflection,lemma:Grasmair-new}, the graph of $F_\alpha$ restricted to $I$ must then contain an affine segment that connects $T_\alpha$ with $B_\alpha$ (or  $B_\alpha$ with $T_\alpha$). Therefore, the arc length of the graph of $F_\alpha$, restricted to $I$, is bounded from below by the Euclidean distance $d_\alpha$ between the graphs of $B_\alpha$ and  $T_\alpha$. Since these sets are compact and disjoint, one has $d_\alpha >0$. Since $d_\alpha$ is independent of $I$, and since $F_\alpha$ is Lipschitz, it follows that the length of $I$ is bounded from below by a number that only depends on $\alpha$ and the Lipschitz constant of $F_\alpha$. Hence, since 
maximally convex (concave) intervals are mutually disjoint, there can only exist finitely many of them. Likewise, since positive (negative) inflection intervals are disjoint, there can only exist finitely many of those. Properties (i) and (ii) are then a straightforward consequence of \cref{lemma:Grasmair-new}. 
\end{proof}
The next lemma states the promised characterization of the modes of the derivative of a taut string.
\begin{lemma}\label{lemma:positiveInflection}
Fix $\alpha >0$ and define
\begin{align*}
f_\alpha(t) = 
 \lim_{\epsilon \to 0}\inf_{0<\delta<\epsilon} \frac{F_\alpha(t+\delta)-F_\alpha(t-\delta)}{2\delta} \quad \text{if} \quad 0<t<1
\end{align*}
and $f_\alpha(t) = \lim_{s\to t}f_\alpha(s)$ for $t\in \{0,1\}$. Then the number of positive inflection intervals of $F_\alpha$ equals the number of modes of $f_\alpha$, and this number is finite. 
\end{lemma}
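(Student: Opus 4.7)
The plan is to read off the mode structure of $f_\alpha$ directly from the convex/concave/affine decomposition of $F_\alpha$ furnished by \cref{lemma:maximallyConvex}. If $F_\alpha$ is affine on all of $[0,1]$, then $f_\alpha$ is constant and both counts are zero. Otherwise, by \cref{lemma:maximallyConvex}(i)--(ii), $[0,1]$ is covered by finitely many maximally convex intervals $C_1,\dots,C_p$ and maximally concave intervals $\bar C_1,\dots,\bar C_q$ in a strictly alternating fashion, and by \cref{lemma:maximallyConvex}(iii) consecutive such intervals of opposite type overlap in either a positive or a negative inflection interval (and every inflection interval arises this way).

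First I would verify that the representative $f_\alpha$ specified in the statement is piecewise monotone. On the interior of a maximally convex interval $C_i$, $F_\alpha$ is convex, so its one-sided derivatives exist everywhere, are monotone non-decreasing, and agree off a countable set; the liminf-of-infimum formula in the statement picks out precisely the left one-sided derivative at each interior point, so $f_\alpha|_{C_i}$ is non-decreasing. Symmetrically, $f_\alpha|_{\bar C_j}$ is non-increasing. On any maximally affine interval (in particular on any inflection interval) $F_\alpha$ has constant slope, so $f_\alpha$ is constant there. Hence $f_\alpha$ has only finitely many monotonicity segments, which immediately gives $\peaks(f_\alpha)<\infty$.

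Next I would match modes to positive inflection intervals. If $[a,b]$ is a positive inflection interval, then by \cref{lemma:maximallyConvex}(iii) it lies at the boundary between a maximally convex interval $C$ immediately to its left and a maximally concave interval $\bar C$ immediately to its right. Pick $t_{-}\in C$ with $t_{-}<a$ close to $a$ and $t_{+}\in \bar C$ with $t_{+}>b$ close to $b$; since the constant value of $f_\alpha$ on $[a,b]$ is both the (left) limit of the non-decreasing values of $f_\alpha$ on $C$ and the (right) limit of the non-increasing values of $f_\alpha$ on $\bar C$, and since the slope of $F_\alpha$ strictly changes across $[a,b]$ (the affine segment on $[a,b]$ connects $T_\alpha(a)$ to $B_\alpha(b)$ by \cref{lem:inflectionSwich}, so $f_\alpha$ is strictly larger on $C$ near $a$ than on $\bar C$ near $b$ is impossible without strict monotonicity on at least one side), one obtains $f_\alpha(t_{-})<f_\alpha(t)<f_\alpha(t_{+})$ is violated in the right direction: in fact $f_\alpha(t_{-})<f_\alpha(a)=f_\alpha(b)>f_\alpha(t_{+})$, producing a mode of $f_\alpha$ at any partition point in $[a,b]$ flanked by $t_{-}$ and $t_{+}$.

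Conversely, I would argue that no other partition can contribute a mode. A partition point $t_i$ with $\peaks(f_\alpha,P,i)=1$ forces $f_\alpha$ to be strictly larger at $t_i$ than at two flanking points, so by piecewise monotonicity $t_i$ must lie in a (maximal) constancy plateau that is preceded by a non-decreasing segment taking a strictly smaller value and followed by a non-increasing segment taking a strictly smaller value; combining this with the alternating structure in \cref{lemma:maximallyConvex}(ii)--(iii), this plateau must be a positive inflection interval, and only one mode can be contributed per such interval (no partition $P$ can assign more than one peak to the same plateau, as the other partition points would have to exceed it in value, contradicting constancy). Since inflection intervals are interior to $(0,1)$ by definition, the first and last monotone segments of $f_\alpha$ (adjacent to $0$ and $1$) never produce modes, so the counts match exactly.

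The main obstacle is the last step: ruling out spurious modes coming from non-strict monotonicity or from the partition definition of $\peaks$. The key is that a maximal constancy plateau of $f_\alpha$ is either an inflection interval or lies inside a single maximally convex/concave interval; in the latter case the monotone extensions on both sides are in the same (weak) direction, forbidding a peak, while in the former case only positive inflection intervals produce the required convex-then-concave switch.
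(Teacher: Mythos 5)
Your overall strategy is the same as the paper's: decompose $[0,1]$ via \cref{lemma:maximallyConvex} into alternating maximally convex/concave intervals, observe that $f_\alpha$ is non-decreasing on the former and non-increasing on the latter, and match modes to positive inflection intervals. The forward direction (each positive inflection interval produces a mode) is essentially right, although your justification of strictness is garbled; the clean reason that $f_\alpha(t_-)<f_\alpha(t)$ for $t_-\in C$ with $t_-<a$ and $t\in(a,b)$ is that equality of these slopes, together with convexity of $F_\alpha$ on $C$, would force $F_\alpha$ to be affine on $[t_-,b]$, contradicting maximality of the affine interval $[a,b]$. (Also, the symmetric difference quotient in the statement converges, on a convex piece, to the average $\tfrac12\left(F_{\alpha,-}'+F_{\alpha,+}'\right)$ of the one-sided derivatives, not to the left derivative as you claim; monotonicity of this representative on convex/concave pieces still holds, so nothing breaks, but the identification is wrong.)

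The genuine gap is in the converse direction. Your claim that a partition point $t_i$ with $\peaks(f_\alpha,P,i)=1$ ``must lie in a (maximal) constancy plateau'' is false: take $f_\alpha$ strictly increasing on $[0,0.4]$, constant on the positive inflection interval $[0.4,0.6]$, strictly decreasing on $[0.6,1]$, and the partition $\{0,\,0.1,\,0.3,\,0.9,\,1\}$; then $t_i=0.3$ counts a mode but lies strictly inside the maximally convex interval, not in the plateau. Consequently your injectivity step (``only one mode can be contributed per such interval'') rests on a false premise. What actually has to be shown---and what the paper shows---is that if $t_i$ counts a mode then $(t_{i-1},t_i)$ cannot lie in a single maximally concave interval and $(t_i,t_{i+1})$ cannot lie in a single maximally convex interval, so a convex-to-concave transition, i.e.\ a positive inflection interval, must occur within $(t_{i-1},t_{i+1})$; one then verifies that distinct modes of the same partition are assigned distinct transitions (two mode points in the same maximally convex interval would force the intermediate, strictly lower, partition point between them to violate monotonicity). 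With that repair your argument coincides with the paper's.
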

\begin{proof}
First notice that the definition of $f_\alpha(0)$ and $f_\alpha(1)$ is meaningful since $F_\alpha$ is affine in some neighborhood of $0$ and $1$. 

If $F_\alpha$ is affine on all of $[0,1]$, then there is nothing to show. So suppose that this is not the case. Consider a finite partition $P=\{t_0, \dots, t_{|P|}\}$ of $[0,1]$. 
Notice that $f_\alpha$ is nowhere decreasing (nowhere increasing) on intervals where $F_\alpha$ is convex (concave). Hence, for $t_i$ to count a mode of $f_\alpha$, i.e., $\peaks(f_\alpha,P,i)=1$, the pair $(t_{i-1}, t_i)$ must not belong to the same maximally concave interval and the pair $(t_i, t_{i+1})$ must not belong to the same maximally convex interval of $F_\alpha$. Since, by assumption, $F_\alpha$ is not affine on all of $[0,1]$, every $t\in [0,1]$ belongs to a maximally concave or maximally convex interval (or both). Therefore, by property (i) of \cref{lemma:maximallyConvex}, to each mode of $f_\alpha$ counted by $P$ there corresponds at least one change from a maximally convex to an immediately consecutive maximally concave interval. By property (ii) of \cref{lemma:maximallyConvex}, the total number of such changes is equal to the number of positive inflection intervals, which we denote by $\I^+(F_\alpha)$. It follows that $\I^+[F_\alpha]\geq \peaks(f_\alpha)$. 

Vice versa, by considering a partition of $[0,1]$ such that there exists (apart from $t_0=0$ and $t_{|P|}=1$) exactly one point in each positive and each negative inflection interval, it is straightforward to show that $\I^+[F_\alpha]\leq \peaks(f_\alpha)$. 

Finally, finiteness of $\peaks(f_\alpha)$ follows from the fact that there are only finitely many positive inflection intervals.
\end{proof}
With these preparations, we are now in the position to prove \cref{thm:tautStringMinModes}.
\begin{proof}[of \cref{thm:tautStringMinModes}]
Let $g\in \L$ with antiderivative $G$ such that $d_K(f,g)\leq \alpha$. Consider a positive inflection interval $[a,b]$ of $F_\alpha$. By \cref{lem:inflectionSwich}, $F_\alpha(a) = T_\alpha(a)$, $F_\alpha(b) = B_\alpha(b)$, and $F_\alpha$ is affine on $[a,b]$. In particular, $G(a) \leq F_\alpha(a)$ and $G(b) \geq F_\alpha(b)$, and thus
\begin{align*}
f_\alpha(t) = \frac{F_\alpha(b) - F_\alpha(a)}{b-a} \leq \frac{G(b) - G(a)}{b-a} \quad\text{for all $t \in (a,b)$} \ .
\end{align*}
For every Lebesgue-integrable $g:[a,b]\to\RR$ with $G(t)=G(a)+\int_a^t g(s)\, \mathrm d s$, there exist sets $C_1, C_2 \subset [a,b]$ of positive Lebesgue measure such that 
\begin{align*}
g(c_1) \leq \frac{G(b)-G(a)}{b-a} \leq g(c_2)
\end{align*}
for all $c_1\in C_1$ and all $c_2\in C_2$. 

Hence, for every positive inflection interval $[a,b]$ there exists $t\in (a,b)$ such that $g(t) \geq f_\alpha(t)$.  By a similar argument, for every negative inflection interval $[a,b]$ there exists $t\in (a,b)$ such that $g(t) \leq f_\alpha(t)$. 
By \cref{lemma:positiveInflection}, whenever $\peaks(f_\alpha) >0$ (otherwise there is nothing to show), the set of positive inflection intervals of $F_\alpha$ is not empty. Therefore, one can choose a partition $P=\{t_0, \dots, t_{|P|}\}$ of $[0,1]$ that contains (apart from $t_0=0$ and $t_{|P|}=1$) exactly one point in the interior of each inflection interval of $F_\alpha$ such that $g(t_i) \geq f_\alpha(t_i)$ whenever $t_i$ lies in a positive inflection interval and $g(t_i) \leq f_\alpha(t_i)$ whenever $t_i$ lies in a negative inflection interval. By the proof of \cref{lemma:positiveInflection}, $\peaks(f_\alpha)=\peaks(f_\alpha,P)$ for \emph{any} partition $P$ that contains (apart from $t_0=0$ and $t_{|P|}=1$) exactly one point in the interior of each inflection interval. Such partitions $P$ count a mode of $f_\alpha$ precisely for every \emph{positive} inflection interval of $F_\alpha$. Since positive and negative inflection intervals are interleaved and their interiors are disjoint, we obtain 
that $\peaks(g,P) \geq \peaks(f_\alpha,P)=\peaks(f_\alpha)$. Thus $\peaks(g) \geq \peaks(f_\alpha)$.
\end{proof}

\section{Computing Kolmogorov signatures}

The results of the previous section lead to an efficient algorithm for computing Kolmogorov signatures. Let $X \subset \L$ be some subset, and let $f\in X$ with antiderivative $F$. Suppose that $X$ contains the derivatives $f_\alpha$ of the taut stings $F_\alpha$ for all $\alpha \geq 0$. For example, let $X$ be the space of piecewise constant functions. For $\alpha$ large enough, $F_\alpha$ is affine on all of $[0,1]$, and its derivative~$f_\alpha$ has no modes. If $f$ has any modes at all, then by lowering $\alpha$ continuously, $F_\alpha$ will at some point develop a positive inflection interval below some threshold $\alpha_0 >0$. By \cref{thm:tautStringMinModes} and \cref{lemma:positiveInflection}, the value of $\alpha_0$ is precisely the distance of $f$ to the set of functions in X with zero modes, i.e., $s_0(f) = \alpha_0$. Continuing this way, and defining $\alpha_k$ as the smallest $\alpha$ for which $f_\alpha$ has at most $k$ modes, one finds that $s_k(f) = \alpha_k$ for all $k$.

The idea of the algorithm below is to reverse this observation: Starting from $f= f_0$, we incrementally compute the 
values of $\alpha$ (in \emph{increasing} order) at which the number of modes of~$f_\alpha$ decreases. To this end, we work with the space $X$ of piecewise constant functions on a fixed partition $0=t_0 <t_1 < \dots < t_n =1$ of $[0,1]$. Notice that since we require $X\subset \L$, we have $f(t_i) = \frac 1 2 (f|_{(t_{i-1},t_i)}+f|_{(t_{i},t_{i+1})})$ for all non-boundary points $t_i$ of the partition.

Our starting point is a reformulation of \cref{lemma:Grasmair-new} for piecewise constant functions.
\begin{lemma}
\label{lem:tautStringVertices}
Let $f$ be a piecewise constant function with antiderivative $F$. Then the taut string $F_\alpha$ is the unique continuous piecewise linear function in $D_\alpha(F)$ with $F_\alpha(0)=F(0)$ and $F_\alpha(1)=F(1)$ such that if $t$ is an increasing (decreasing) discontinuity of $f_\alpha=F_\alpha '$, then $F_\alpha(t)=F(t)+\alpha$ ($F_\alpha(t)=F(t)-\alpha$).
\end{lemma}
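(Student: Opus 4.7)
The plan is to extract both existence and uniqueness from \cref{lemma:Grasmair-new}, exploiting only that a piecewise constant $f$ has a piecewise linear antiderivative $F$, so that the tube boundaries $T_\alpha=F+\alpha$ and $B_\alpha=F-\alpha$ are themselves piecewise linear. For the existence half, \cref{lemma:Grasmair-new} asserts that $F_\alpha$ is piecewise affine on $(0,1)\setminus(S_{B,\alpha}\cup S_{T,\alpha})$, while on the touch sets $F_\alpha$ coincides with $T_\alpha$ or $B_\alpha$, which are piecewise linear. Splicing these two regimes together, and using the continuity of $F_\alpha$ together with the boundary conditions $F_\alpha(0)=F(0)$, $F_\alpha(1)=F(1)$, I obtain that $F_\alpha$ is a continuous piecewise linear element of $D_\alpha(F)$.

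To verify the tube condition at discontinuities of $f_\alpha=F_\alpha'$, suppose $t\in(0,1)$ is an increasing discontinuity, i.e.\ a convex corner of $F_\alpha$. If $t\notin S_{T,\alpha}$, then $t$ lies in the interior of a connected component of $(0,1)\setminus S_{T,\alpha}$, on which $F_\alpha$ is concave by \cref{lemma:Grasmair-new}. Since a concave function admits no convex corner, this is a contradiction, and hence $F_\alpha(t)=T_\alpha(t)=F(t)+\alpha$. The symmetric argument, using convexity of $F_\alpha$ on components of $(0,1)\setminus S_{B,\alpha}$, rules out decreasing corners outside $S_{B,\alpha}$ and gives $F_\alpha(t)=F(t)-\alpha$ at every decreasing discontinuity.

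For uniqueness, I would let $G$ be any continuous piecewise linear function in $D_\alpha(F)$ with $G(0)=F(0)$, $G(1)=F(1)$, and the stated discontinuity condition, and verify that $G$ satisfies the hypotheses of \cref{lemma:Grasmair-new} with respect to its own touch sets $S_B(G)=\{t:G(t)=F(t)-\alpha\}$ and $S_T(G)=\{t:G(t)=F(t)+\alpha\}$. On any connected component $I$ of $(0,1)\setminus S_B(G)$, the hypothesis forbids $G'$ from having decreasing jumps in $I$; since $G$ is piecewise linear, the remaining corners of $G$ on $I$ are all increasing, so $G'$ is nondecreasing on $I$ and $G$ is convex there. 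Symmetrically, $G$ is concave on each connected component of $(0,1)\setminus S_T(G)$, and the uniqueness clause of \cref{lemma:Grasmair-new} forces $G=F_\alpha$.

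The main subtlety I anticipate is the precise reading of the uniqueness in \cref{lemma:Grasmair-new}: it must be interpreted as a self-referential characterization, namely that any $G\in D_\alpha(F)$ which is convex on components of $(0,1)\setminus S_B(G)$ and concave on components of $(0,1)\setminus S_T(G)$ must coincide with $F_\alpha$, without presupposing that $G$ is the taut string. Once this reading is fixed, the uniqueness step reduces to the one-line piecewise-linearity observation above, and the rest of the argument is routine.
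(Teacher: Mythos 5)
Your argument is correct and is exactly the derivation the paper intends but omits: the paper states this lemma without proof as a "reformulation" of \cref{lemma:Grasmair-new} for piecewise constant $f$, and your two halves (corners of a piecewise linear $F_\alpha$ must lie on the touch sets because $F_\alpha$ is concave/convex off them; conversely the jump condition forces convexity/concavity off the candidate's own touch sets, whence uniqueness) fill in precisely the missing details. Your flagged subtlety about reading the uniqueness clause of \cref{lemma:Grasmair-new} self-referentially is the right reading and the only point requiring care.
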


Fix $\alpha \geq 0$. Let $I=(a,b)\subseteq (0,1)$ be an open interval, and let $f_\alpha$ be constant on $(a,b)$. We call $I$ \emph{regular} for $f_\alpha$ if either $a=0$ and $b=1$ or $a>0$, $b< 1$, and there exists $\epsilon >0$ such that for all $0< \delta \leq \epsilon$ either $f_\alpha(a) > f_\alpha(a-\delta)$ and $f_\alpha(b) < f_\alpha(b +\delta)$ or $f_\alpha(a) < f_\alpha(a-\delta)$ and $f_\alpha(b) > f_\alpha(b +\delta)$. We call $I=(a,b)$ \emph{maximal (respectively~minimal)} for $f_\alpha$ if $a>0$, $b<1$, and there exists $\epsilon >0$ such that for all $0< \delta \leq \epsilon$ one has $f_\alpha(a) > f_\alpha(a-\delta)$ and $f_\alpha(b) > f_\alpha(b +\delta)$  (respectively~$f_\alpha(a) < f_\alpha(a-\delta)$ and $f_\alpha(b) < f_\alpha(b +\delta)$). We call $I$ \emph{critical} if it is minimal or maximal. Finally, we call $I=(a,b)$ a \emph{boundary interval} for $f_\alpha$ if either $a=0$ and $b<1$ or $a>0$ and $b=1$, and $(a,b)$ is the largest such interval on which $f_\alpha$ is constant. As 
a consequence of \cref{lem:tautStringVertices} we  obtain:

\begin{corollary}
\label{cor:valuesOfTautString}
Away from discontinuities,
$f_\alpha$ has the following form: either 
\begin{itemize}
\item
$t$ lies on a regular interval $I=(a,b)$ of $f_\alpha$ with value $f_\alpha(t)=%
\frac{F(b)-F(a)}{b-a}$, 
\item $t$ lies on a locally minimal/maximal interval $I=(a,b)$ of $f_\alpha$ with value $f_\alpha(t)=%
\frac{F(b)-F(a)\pm2\alpha}{b-a}$, %
or
\item $t$ lies on a boundary interval of $f_\alpha$ with value $f_\alpha(t)=%
\frac{F(b)-F(a)\pm\alpha}{b-a}$. %
\end{itemize}
\end{corollary}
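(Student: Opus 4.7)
The plan is a direct case analysis based on Lemma~\ref{lem:tautStringVertices}, which pins down $F_\alpha$ at every jump of $f_\alpha$: at an increasing discontinuity $F_\alpha(t)=F(t)+\alpha$, and at a decreasing discontinuity $F_\alpha(t)=F(t)-\alpha$. Together with the boundary conditions $F_\alpha(0)=F(0)$ and $F_\alpha(1)=F(1)$ and the fact that $F_\alpha$ is piecewise linear with slope equal to $f_\alpha$ on each interval of constancy, this determines the constant value of $f_\alpha$ on an interval $(a,b)$ via $f_\alpha(t) = (F_\alpha(b)-F_\alpha(a))/(b-a)$. So the strategy is: for each of the three interval types (regular, critical, boundary), read off $F_\alpha(a)$ and $F_\alpha(b)$ from the lemma (or the boundary conditions), then compute the slope.

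First I would handle the regular case. By definition either $a=0$ and $b=1$, so the boundary conditions give $F_\alpha(a)=F(a)$, $F_\alpha(b)=F(b)$, and the slope is $(F(b)-F(a))/(b-a)$; or the discontinuities at $a$ and $b$ are both increasing or both decreasing. In the former subcase, $F_\alpha(a)=F(a)+\alpha$ and $F_\alpha(b)=F(b)+\alpha$; in the latter, $F_\alpha(a)=F(a)-\alpha$ and $F_\alpha(b)=F(b)-\alpha$. Either way the $\alpha$ contributions cancel, leaving the slope $(F(b)-F(a))/(b-a)$.

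Next, for a critical interval the two bordering discontinuities are of opposite type. If $(a,b)$ is maximal, the condition $f_\alpha(a)>f_\alpha(a-\delta)$ encodes an increasing discontinuity and $f_\alpha(b)>f_\alpha(b+\delta)$ a decreasing one, so by the lemma $F_\alpha(a)=F(a)+\alpha$ and $F_\alpha(b)=F(b)-\alpha$; the slope is $(F(b)-F(a)-2\alpha)/(b-a)$. For a minimal interval the roles reverse, producing the slope $(F(b)-F(a)+2\alpha)/(b-a)$. This gives the $\pm 2\alpha$ formula. Finally, for a boundary interval exactly one endpoint is a boundary point of $[0,1]$ (contributing $F_\alpha=F$ there by the boundary condition), while the other endpoint carries a single discontinuity of $f_\alpha$ contributing $\pm\alpha$ according to whether the jump is up or down; this yields $(F(b)-F(a)\pm\alpha)/(b-a)$.

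I do not anticipate any serious obstacle: once the bookkeeping between ``increasing/decreasing discontinuity'' and ``top/bottom of the $\alpha$-tube'' is set up, everything reduces to linear interpolation. The only point requiring a moment's care is the sign convention in the definitions of minimal and maximal intervals, namely verifying that the condition $f_\alpha(a)>f_\alpha(a-\delta)$ really is an increasing discontinuity in the sense of Lemma~\ref{lem:tautStringVertices} (so that $F_\alpha(a)=F(a)+\alpha$), and analogously for the other three combinations; once this correspondence is in place, the three displayed formulas follow immediately.
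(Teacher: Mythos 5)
Your proposal is correct and follows exactly the route the paper intends: the paper states this corollary without proof as an immediate consequence of \cref{lem:tautStringVertices}, and your case analysis (reading off $F_\alpha(a)$ and $F_\alpha(b)$ as $F\pm\alpha$ at increasing/decreasing discontinuities or as $F$ at the endpoints $0,1$, then computing the slope of the affine piece) is precisely the intended verification, with the sign bookkeeping done correctly in all three cases.
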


This corollary is central for our computation of Kolmogorov signatures. First observe that a value of a maximal interval is continuously decreasing with growing $\alpha$, the value of a minimal interval is continuously increasing, and the value of a regular interval remains unchanged. Moreover, if $\alpha$ is increased only slightly, then the discontinuities of $f_\alpha$ remain unchanged; indeed:

\begin{lemma}
\label{lem:jumpsStayConstant}
Let $F$ be piecewise linear. For every $\alpha \geq 0$ there is $\delta>0$ such that the points of discontinuity of $f_\beta$ coincide with those of $f_\alpha$ for all $\beta$ with $\alpha \leq \beta < \alpha + \delta$. Moreover, if $t$ lies on a regular interval of $f_\alpha$, then $f_\beta(t)=f_\alpha(t)$. 
\end{lemma}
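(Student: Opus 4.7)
The plan is to construct an explicit candidate for $F_\beta$ from $F_\alpha$ by moving the touching values at the vertices outward from $F(t_{i_j}) \pm \alpha$ to $F(t_{i_j}) \pm \beta$ and then invoke the uniqueness part of \cref{lem:tautStringVertices}. Since $F$ is piecewise linear on a finite partition $0 = t_0 < \cdots < t_n = 1$, the discontinuities of $f_\alpha$ form a finite subset $t_{i_1} < \cdots < t_{i_m}$ of these breakpoints, and for each $j$ the type (increasing or decreasing) together with \cref{lem:tautStringVertices} fixes a sign $\sigma_j \in \{\pm 1\}$ with $F_\alpha(t_{i_j}) = F(t_{i_j}) + \sigma_j \alpha$. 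For $\beta \geq \alpha$ I would define $\tilde F_\beta$ as the continuous piecewise linear function with breakpoints $\{0, t_{i_1}, \ldots, t_{i_m}, 1\}$, boundary values $F(0)$ and $F(1)$, and interior values $\tilde F_\beta(t_{i_j}) = F(t_{i_j}) + \sigma_j \beta$.

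The next step is to verify, for some $\delta > 0$ and every $\beta \in [\alpha, \alpha + \delta)$, that (a) $\tilde F_\beta \in D_\beta(F)$ and (b) the slope of $\tilde F_\beta$ has a genuine jump of the same sign as $F_\alpha$ at each $t_{i_j}$. Once (a) and (b) hold, \cref{lem:tautStringVertices} forces $\tilde F_\beta = F_\beta$, so the discontinuities of $f_\beta$ coincide with those of $f_\alpha$. Condition (b) is immediate: by \cref{cor:valuesOfTautString} the slope of $\tilde F_\beta$ on each linear piece is affine in $\beta$, each jump at $t_{i_j}$ is nonzero at $\beta = \alpha$, and continuity preserves the signs of the finitely many jumps on a neighborhood of $\alpha$.

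The main technical step is condition (a). Since both $\tilde F_\beta$ and $F$ are piecewise linear with breakpoints among $\{t_0, \ldots, t_n\}$, the function $\tilde F_\beta - F$ is piecewise linear on each subinterval $(t_{i_j}, t_{i_{j+1}})$ and attains its extremes at some partition vertex $t_k$. At breakpoints $t_{i_j}$ the bound $|\tilde F_\beta - F| = \beta$ holds by construction. At an interior vertex $t_k \in (t_{i_j}, t_{i_{j+1}})$ with $\lambda := (t_k - t_{i_j})/(t_{i_{j+1}} - t_{i_j}) \in (0,1)$, linear interpolation yields
\begin{align*}
\tilde F_\beta(t_k) - F(t_k) = \bigl(L_j(t_k) - F(t_k)\bigr) + \beta\bigl((1-\lambda)\sigma_j + \lambda \sigma_{j+1}\bigr),
\end{align*}
where $L_j$ is the line through $(t_{i_j}, F(t_{i_j}))$ and $(t_{i_{j+1}}, F(t_{i_{j+1}}))$. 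Writing $c := (1-\lambda)\sigma_j + \lambda\sigma_{j+1} \in [-1,1]$ and using $|(L_j(t_k) - F(t_k)) + c\alpha| \leq \alpha$ (which holds because $F_\alpha = \tilde F_\alpha$ lies in $D_\alpha(F)$), a direct computation shows $|\tilde F_\beta(t_k) - F(t_k)| \leq \beta$ for every $\beta \geq \alpha$, so condition (a) is automatic.

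For the ``moreover'' statement, once the discontinuity sets of $f_\alpha$ and $f_\beta$ agree, the constancy interval $(a,b)$ and the signs of its two bordering jumps are preserved; in particular $(a,b)$ remains regular for $f_\beta$. \Cref{cor:valuesOfTautString} then immediately gives $f_\beta(t) = \frac{F(b) - F(a)}{b-a} = f_\alpha(t)$, the regular-interval value formula being independent of the tube radius.
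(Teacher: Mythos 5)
Your proposal is correct and follows essentially the same route as the paper: build the explicit piecewise linear candidate by pushing the touching values at the discontinuities of $f_\alpha$ out from $F(t)\pm\alpha$ to $F(t)\pm\beta$, check it lies in $D_\beta(F)$ and that the jump types persist for small $\delta$, and conclude by the uniqueness part of \cref{lem:tautStringVertices}. The only cosmetic difference is that the paper verifies membership in $D_\beta(F)$ in one line via $\|G_\beta-F_\alpha\|_\infty=\beta-\alpha$ and the triangle inequality, whereas you carry out the equivalent pointwise interpolation estimate at each vertex.
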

\begin{proof}
Define $G_\beta$ by the properties of \cref{lem:tautStringVertices}, using the discontinuities of $f_\alpha$, i.e., if $t$ is an increasing (decreasing) discontinuity of $f_\alpha$, then define $G_\beta(t):=F(t)+\beta$ (resp.~$G_\beta(t):=F(t)-\beta$); set $G_\beta(0):=F(0)$ and $G_\beta(1):=F(1)$, and interpolate linearly. Then $\|G_\beta-F_\alpha\|_\infty=\beta-\alpha$ and thus, since $F_\alpha \in D_\alpha(F)$, we have that $\|G_\beta-F\|_\infty \leq \beta$, i.e., $G_\beta \in D_\beta(F)$. For $\delta$ sufficiently small, the discontinuities of $g_\beta=G_\beta '$ have the same type as those of $f_\alpha$. But since $F_\beta$ is uniquely defined by the properties of \cref{lem:tautStringVertices} with respect to these discontinuities, we must have $F_\beta=G_\beta$.
\end{proof}

As a consequence, for every $\alpha \geq 0$, there exists a minimal number $\mu(\alpha)>\alpha$ such that $f_\beta$ and $f_\alpha$ have the same points of discontinuity for all $\beta$ with $\alpha \leq \beta < \mu(\alpha)$ but the set of points of discontinuity of $f_{\mu(\alpha)}$ is different from that of $f_\alpha$. We call $\mu(\alpha)$ the \emph{merge value} of $\alpha$. The merge value is the smallest number strictly greater than $\alpha$ for which a critical interval or a boundary interval of $f_{\mu(\alpha)}$ reaches the value of an adjacent constant interval, and the corresponding discontinuity vanishes.  Each discontinuity of $f_\alpha$ that is incident to a critical or a boundary interval is a possible candidate for such an event. Consider such a discontinuity $b$ between two consecutive constant intervals $I=[a,b]$ and $J=[b,c]$ of $f_\alpha$. For an interval $I=[a,b]$, let $F_I:=F(b)-F(a)$. As a consequence of \cref{cor:valuesOfTautString} and \cref{lem:jumpsStayConstant}, we 
obtain that the merge value $\mu(\alpha)$ is the smallest number among all merge value \emph{candidates} $m_{I,J}$ of $f_\alpha$, which are computed as follows:

If $I$ is critical and $J$ is regular or vice-versa, then the merge value candidate is
\[
m_{I,J} = %
\frac12 \left| F_I - \frac{|I|}{|J|}F_J\right|.
\]
If both $I$ and $J$ are critical, then the merge value candidate is
\[
m_{I,J} = %
\left|\frac{|I|F_J-|J|F_I}{2(|I|+|J|)}\right|.
\]
If $I$ is critical and $J$ is a boundary interval, then the merge value candidate is
\[
m_{I,J} = %
\left|\frac{|I|F_J-|J|F_I}{|I|+2|J|}\right|.
\]
If $I$ is a boundary interval and $J$ is critical, then the merge value candidate is
\[
m_{I,J} = %
\left|\frac{|I|F_J-|J|F_I}{2|I|+|J|}\right|.
\]
If $I$ is a boundary interval and $J$ is regular or vice-versa, then the merge value candidate is
\[
m_{I,J} = %
\left| F_I - \frac{|I|}{|J|}F_J\right|.
\]
If both $I$ and $J$ are boundary intervals, then the merge value candidate is
\[
m_{I,J} = %
\left |\frac{|I|F_J-|J|F_I}{|I|+|J|}\right|.
\]

We define the sequence of merge values $\mu_1<  \mu_2 < \mu_3<\dots$ of $f$ as follows. Starting from $\mu_1 := \mu(0)$, let $\mu_{i+1} := \mu({\mu_i})$. By construction, the values $\alpha = \mu_i$ are precisely those values where the number of discontinuities of $f_{\alpha}$ decreases with increasing $\alpha$. 

Observe that the merge value \emph{candidates} of $f_{\mu_{i+1}}$ are equal to those of $f_{\mu_i}$ \emph{except only} for the merged intervals $I$ and $J$, i.e., those intervals that have the same value for $f_{\mu_{i+1}}$ but did not have the same value for $f_{\mu_i}$. This suggests an efficient way for computing Kolmogorov signatures of~$f$ in reverse order.
Starting with $\alpha=0$, we iterate in increasing order through the sequence of merge values of $f$. 
In a min-priority queue, we maintain the merge value \emph{candidates} $m_{I,J}$. In each iteration $i$, the lowest merge value candidate is the next value $\mu_i$.
Upon a merge, the corresponding discontinuity is removed, and the merge value candidates of the neighboring discontinuities are recomputed and updated in the priority queue. The discontinuities are organized in a linked list to allow fast access to the neighbors. If the number of modes of $f_\alpha$ has decreased upon a merge, the value $\alpha$ is prepended to the sequence of computed signatures. This can only occur if one of the merged intervals is maximal.
The method is summarized in pseudocode in \cref{alg:kolmogorovSign}.
Using an appropriate heap data structure, the running time is $O(n\log n)$, where $n$ is the number of function values of $f$.

\begin{algorithm}[ht]
\caption{Computing Kolmogorov signatures}
\label{alg:kolmogorovSign}
\begin{algorithmic}[1]
\Procedure {KolmogorovSequence}{$f$: list of function values}
  \State $\alpha=0$
  \State $S=\text{empty sequence}$

  \State $L=$ jumps of $f$ (linked list)
  
  \State $Q=$ merge values of the jumps (priority queue)
  
  \While{the priority queue $Q$ is not empty}
    \State let $\alpha$ be the smallest merge value in $Q$
    \State let $I=[a,b]$ and $J=[b,c]$ be the corresponding intervals
    \If{$I$ and $J$ are minimum/maximum or boundary/maximum of $f_\alpha$}
     \State prepend $\alpha$ to $S$
    \EndIf
    \State remove $b$ from the list $L$ of discontinuities
    \State remove $\alpha$ from the priority queue $Q$
    \State recompute merge values of $a$ and $c$ and update priority queue $Q$
    \EndWhile
  \State \Return $S$ 
\EndProcedure
\end{algorithmic}
\end{algorithm}

\section*{Acknowledgements}
We would like to thank the anonymous reviewers for their very helpful suggestions for revising our manuscript and Carola Schoenlieb for inspiring discussions.

\bibliography{baueru}

\begin{thebibliography}{32}
\providecommand{\natexlab}[1]{#1}
\providecommand{\url}[1]{\texttt{#1}}
\expandafter\ifx\csname urlstyle\endcsname\relax
  \providecommand{\doi}[1]{doi: #1}\else
  \providecommand{\doi}{doi: \begingroup \urlstyle{rm}\Url}\fi

\bibitem[Balakrishnan et~al.(2012)Balakrishnan, Rinaldo, Sheehy, Singh, and
  Wasserman]{balakrishnan12minimax}
S.~Balakrishnan, A.~Rinaldo, D.~Sheehy, A.~Singh, and L.~A. Wasserman.
\newblock Minimax rates for homology inference.
\newblock \emph{Journal of Machine Learning Research - Proceedings Track},
  22:\penalty0 64--72, 2012.

\bibitem[Bauer et~al.(2012)Bauer, Lange, and Wardetzky]{Bauer2012Optimal}
U.~Bauer, C.~Lange, and M.~Wardetzky.
\newblock \href{http://dx.doi.org/10.1007/s00454-011-9350-z}{Optimal
  topological simplification of discrete functions on surfaces}.
\newblock \emph{Discrete \& Computational Geometry}, 47\penalty0 (2):\penalty0
  347--377, 2012.

\bibitem[Bendich et~al.(2011)Bendich, Galkovskyi, and
  Harer]{Bendich2011Improving}
P.~Bendich, T.~Galkovskyi, and J.~Harer.
\newblock \href{http://dx.doi.org/10.1088/0266-5611/27/12/124002}{Improving
  homology estimates with random walks}.
\newblock \emph{Inverse Problems}, 27\penalty0 (12):\penalty0 124002+, 2011.

\bibitem[Billingsley(1999)]{Billingsley99}
P.~Billingsley.
\newblock \href{http://dx.doi.org/10.1002/9780470316962}{\emph{Convergence of
  probability measures}}.
\newblock Wiley Series in Probability and Statistics: Probability and
  Statistics. John Wiley \& Sons Inc., second edition, 1999.
\newblock A Wiley-Interscience Publication.

\bibitem[Bubenik and Kim(2007)]{Bubenik2007Statistical}
P.~Bubenik and P.~T. Kim.
\newblock \href{http://www.intlpress.com/hha/v9/n2/a12/}{A statistical approach
  to persistent homology}.
\newblock \emph{Homology, Homotopy and Applications}, 9\penalty0 (2):\penalty0
  337--362, 2007.

\bibitem[Bubenik et~al.(2010)Bubenik, Carlsson, Kim, and
  Luo]{Bubenik2010Statistical}
P.~Bubenik, G.~Carlsson, P.~T. Kim, and Z.-M. Luo.
\newblock \href{http://arxiv.org/abs/0908.3668}{Statistical topology via
  {Morse} theory persistence and nonparametric estimation}.
\newblock In M.~A.~G. Viana and H.~P. Wynn, editors, \emph{Algebraic Methods in
  Statistics and Probability II}, volume 516 of \emph{Contemporary
  Mathematics}, pages 75--92. American Mathematical Society, 2010.

\bibitem[Chan and Walther(2013)]{Chan11detection}
H.~P. Chan and G.~Walther.
\newblock Detection with the scan and the average likelihood ratio.
\newblock \emph{arXiv:1107.4344v1}, 2013.

\bibitem[Chazal et~al.(2009)Chazal, Cohen-Steiner, Guibas, M\'{e}moli, and
  Oudot]{Chazal2009GromovHausdorff}
F.~Chazal, D.~Cohen-Steiner, L.~J. Guibas, F.~M\'{e}moli, and S.~Y. Oudot.
\newblock
  \href{http://dx.doi.org/10.1111/j.1467-8659.2009.01516.x}{{Gromov-Hausdorff}
  stable signatures for shapes using persistence}.
\newblock \emph{Computer Graphics Forum}, 28\penalty0 (5):\penalty0 1393--1403,
  2009.

\bibitem[Chazal et~al.(2011)Chazal, Cohen-Steiner, and
  M\'{e}rigot]{Chazal2011Geometric}
F.~Chazal, D.~Cohen-Steiner, and Q.~M\'{e}rigot.
\newblock \href{http://dx.doi.org/10.1007/s10208-011-9098-0}{Geometric
  inference for probability measures}.
\newblock \emph{Foundations of Computational Mathematics}, 11\penalty0
  (6):\penalty0 733--751, 2011.

\bibitem[Cohen-Steiner et~al.(2007)Cohen-Steiner, Edelsbrunner, and
  Harer]{CohenSteiner2007Stability}
D.~Cohen-Steiner, H.~Edelsbrunner, and J.~Harer.
\newblock \href{http://dx.doi.org/10.1007/s00454-006-1276-5}{Stability of
  persistence diagrams}.
\newblock \emph{Discrete and Computational Geometry}, 37\penalty0 (1):\penalty0
  103--120, 2007.

\bibitem[Davies and Kovac(2001)]{DavKov01}
P.~L. Davies and A.~Kovac.
\newblock Local extremes, runs, strings and multiresolution.
\newblock \emph{The Annals of Statistics}, 29\penalty0 (1):\penalty0 1--65,
  2001.
\newblock With discussion and rejoinder by the authors.

\bibitem[Davies and Kovac(2004)]{Davies2004Densities}
P.~L. Davies and A.~Kovac.
\newblock
  \href{http://projecteuclid.org/DPubS?service=UI\&\#38;version=1.0\&\#38;verb=Display\&\#38;handle=euclid.aos/1085408496}{Densities,
  spectral densities and modality}.
\newblock \emph{The Annals of Statistics}, 32\penalty0 (3):\penalty0
  1093--1136, 2004.

\bibitem[Donoho and Jin(2004)]{donoho2004}
D.~Donoho and J.~Jin.
\newblock \href{http://dx.doi.org/10.1214/009053604000000265}{Higher criticism
  for detecting sparse heterogeneous mixtures}.
\newblock \emph{Ann. Statist.}, 32\penalty0 (3):\penalty0 962--994, 2004.

\bibitem[{Donoho}(1988)]{Donoho88}
D.~L. {Donoho}.
\newblock {One-sided inference about functionals of a density.}
\newblock \emph{{The Annals of Statistics}}, 16\penalty0 (4):\penalty0
  1390--1420, 1988.

\bibitem[Donoho et~al.(1995)Donoho, Johnstone, Kerkyacharian, and
  Picard]{Donoho1995Wavelet}
D.~L. Donoho, I.~M. Johnstone, G.~Kerkyacharian, and D.~Picard.
\newblock
  \href{http://links.jstor.org/sici?sici=0035-9246(1995)57:2<301:WSA>2.0.CO;2-S&origin=MSN}{Wavelet
  shrinkage: asymptopia?}
\newblock \emph{J. Roy. Statist. Soc. Ser. B}, 57\penalty0 (2):\penalty0
  301--369, 1995.
\newblock With discussion and a reply by the authors.

\bibitem[Edelsbrunner and Harer(2010)]{EdelesbrunnerHarer10}
H.~Edelsbrunner and J.~L. Harer.
\newblock \emph{Computational Topology: An Introduction}.
\newblock AMS, 2010.

\bibitem[Edelsbrunner et~al.(2002)Edelsbrunner, Letscher, and
  Zomorodian]{Edelsbrunner2002Topological}
H.~Edelsbrunner, D.~Letscher, and A.~Zomorodian.
\newblock \href{http://dx.doi.org/10.1007/s00454-002-2885-2}{Topological
  persistence and simplification}.
\newblock \emph{Discrete and Computational Geometry}, 28\penalty0 (4):\penalty0
  511--533, 2002.

\bibitem[Fasy et~al.(2014)Fasy, Lecci, Rinaldo, Wasserman, Balakrishnan, and
  Singh]{fasy2014}
B.~T. Fasy, F.~Lecci, A.~Rinaldo, L.~Wasserman, S.~Balakrishnan, and A.~Singh.
\newblock \href{http://dx.doi.org/10.1214/14-AOS1252}{Confidence sets for
  persistence diagrams}.
\newblock \emph{Ann. Statist.}, 42\penalty0 (6):\penalty0 2301--2339, 2014.

\bibitem[Good and Gaskins(1980)]{Good80}
I.~J. Good and R.~A. Gaskins.
\newblock Density estimation and bump-hunting by the penalized likelihood
  method exemplified by scattering and meteorite data.
\newblock \emph{Journal of the American Statistical Association}, 75\penalty0
  (369):\penalty0 pp. 42--56, 1980.

\bibitem[Grasmair(2007)]{Grasmair2007Equivalence}
M.~Grasmair.
\newblock \href{http://dx.doi.org/10.1007/s10851-006-9796-4}{The equivalence of
  the taut string algorithm and {BV}-regularization}.
\newblock \emph{Journal of Mathematical Imaging and Vision}, 27\penalty0
  (1):\penalty0 59--66, 2007.

\bibitem[Grasmair and Obereder(2008)]{Grasmair2008Generalizations}
M.~Grasmair and A.~Obereder.
\newblock \href{http://dx.doi.org/10.1080/01630560801998211}{Generalizations of
  the taut string method}.
\newblock \emph{Numerical Functional Analysis and Optimization}, 29\penalty0
  (3-4):\penalty0 346--361, 2008.

\bibitem[Hartigan(2000)]{Hartigan2000}
J.~A. Hartigan.
\newblock Testing for antimodes.
\newblock In \emph{Data Analysis}, Studies in Classification, Data Analysis,
  and Knowledge Organization, pages 169--181. Springer Berlin Heidelberg, 2000.

\bibitem[Hartigan and Hartigan(1985)]{Harting85}
J.~A. Hartigan and P.~M. Hartigan.
\newblock The dip test of unimodality.
\newblock \emph{The Annals of Statistics}, 13\penalty0 (1):\penalty0 pp.
  70--84, 1985.

\bibitem[Ingster and Suslina(2003)]{Ingster03}
Y.~Ingster and I.~Suslina.
\newblock \emph{Nonparametric Goodness-of-Fit Testing Under Gaussian Models},
  volume 169 of \emph{Lecture Notes in Statistics}.
\newblock Springer, 2003.

\bibitem[Kloke and Carlsson(2010)]{Kloke2010Topological}
J.~Kloke and G.~Carlsson.
\newblock \href{http://arxiv.org/abs/0910.5947}{Topological {De-Noising}:
  Strengthening the topological signal}, 2010.
\newblock \href {http://arxiv.org/abs/0910.5947} {\path{arXiv:0910.5947}}.

\bibitem[Mammen and van~de Geer(1997)]{Mammen1997Locally}
E.~Mammen and S.~van~de Geer.
\newblock \href{http://projecteuclid.org/euclid.aos/1034276635}{Locally
  adaptive regression splines}.
\newblock \emph{The Annals of Statistics}, 25\penalty0 (1):\penalty0 387--413,
  1997.

\bibitem[Rio(2000)]{Rio2010Theorie}
E.~Rio.
\newblock \emph{Th\'eorie asymptotique des processus al\'eatoires faiblement
  d\'ependants}, volume~31 of \emph{Math\'ematiques \& Applications (Berlin)
  [Mathematics \& Applications]}.
\newblock Springer-Verlag, 2000.

\bibitem[Sheehy(2012)]{Sheehy2012Multicover}
D.~R. Sheehy.
\newblock \href{http://www.cs.cmu.edu/\~{}dsheehy/sheehy12multicover.html}{A
  multicover nerve for geometric inference}.
\newblock In \emph{CCCG: Canadian Conference in Computational Geometry}, 2012.

\bibitem[Shorack and Wellner(2009)]{shorack2009empirical}
G.~R. Shorack and J.~A. Wellner.
\newblock \emph{Empirical processes with applications to statistics},
  volume~59.
\newblock Siam, 2009.

\bibitem[Silverman(1981)]{Silverman81}
B.~W. Silverman.
\newblock Using kernel density estimates to investigate multimodality.
\newblock \emph{Journal of the Royal Statistical Society. Series B
  (Methodological)}, 43\penalty0 (1):\penalty0 pp. 97--99, 1981.

\bibitem[van~der Vaart(2000)]{vanderVaart00}
A.~W. van~der Vaart.
\newblock \emph{Asymptotic Statistics}.
\newblock Cambridge Series in Statistical and Probabilistic Mathematics.
  Cambridge University Press, 2000.

\bibitem[{Wand} and {Jones}(1995)]{wand95}
M.~{Wand} and M.~{Jones}.
\newblock \emph{{Kernel smoothing.}}
\newblock London: Chapman \& Hall, 1995.

\end{thebibliography}

\end{document}